\theoremstyle{definition}
\theoremstyle{remark}
\theoremstyle{theorem}
\newtheorem{theoremalpha}{Theorem}
\theoremstyle{corollary}
\newtheorem{problemalpha}[theoremalpha]{Problem}
\newtheorem{theorem}{Theorem}[section]
\newtheorem{lemma}[theorem]{Lemma}
\newtheorem{proposition}[theorem]{Proposition}
\theoremstyle{corollary}
\newtheorem{corollary}[theorem]{Corollary}
\theoremstyle{definition}
\newtheorem{definition}[theorem]{Definition}
\newtheorem{example}[theorem]{Example}
\theoremstyle{remark}
\newtheorem{remark}[theorem]{Remark}
\numberwithin{equation}{section}
\newcommand{\C}{\mathbb{C}}
\newcommand{\Z}{\mathbb{Z}}
\newcommand{\Q}{\mathbb{Q}}
\def\P{\mathbb{P}}
\def\Pic{\operatorname{Pic}}
\def\Cox{\operatorname{Cox}}
\def\Cone{\operatorname{Cone}}
\def\Cone{\operatorname{Cone}}
\def\Bs{\operatorname{Bs}}
\title[Geometric properties of projective manifolds of small degree]{Geometric properties of projective manifolds of small degree}
\begin{document}

\author{Sijong Kwak}
\address{Department of Mathematical Sciences, KAIST, Daejeon, Korea}
\email{sjkwak@kaist.ac.kr}

\author{Jinhyung Park}
\address{Department of Mathematical Sciences, KAIST, Daejeon, Korea}
\curraddr{School of Mathematics, Korea Institute for Advanced Study, Seoul, Korea}
\email{parkjh13@kias.re.kr}

\thanks{Both authors were partially supported  by the National Research Foundation of Korea (NRF) funded by the Korea government (MSIP) (ASARC, NRF-2007-0056093).}
\subjclass[2010]{14N25}

\date{\today}


\keywords{small degree, double point divisor, adjunction mapping, weak Fano variety, scroll, Roth variety, Castelnuovo variety, Cox ring}

\begin{abstract}
The aim of this paper is to study geometric properties of non-degenerate smooth projective varieties of small degree from a birational point of view.
First, using the positivity property of double point divisors and the adjunction mappings, we classify smooth projective varieties in $\P^r$ of degree $d \leq r+2$, and consequently, we show that such varieties are simply connected and rationally connected except in a few cases. This is a generalization of P. Ionescu's work. We also show the finite generation of Cox rings of smooth projective varieties in $\P^r$ of degree $d \leq r$ with counterexamples for $d=r+1, r+2$. On the other hand, we prove that a non-uniruled smooth projective variety in $\P^r$ of dimension $n$ and degree $d \leq n(r-n)+2$ is Calabi-Yau, and give an example that shows this bound is also sharp.
\end{abstract}

\maketitle

\section*{Introduction}

Every $n$-dimensional smooth projective variety can be embedded in $\P^{2n+1}$ with arbitrarily large degree. In contrast, there are some geometric restrictions for a variety to admit an embedding with small degree. It is well known that a non-degenerate smooth projective curve in $\P^r$ of degree $d \leq r+1$ is either a rational curve or an elliptic normal curve with $d=r+1$. Considerable efforts have been devoted to the generalization of this classical fact in several directions. One can regard the degree condition as $d \leq (r-1)+2$, where $(r-1)$ is the codimension of a curve in $\P^r$. The generalization is then simply the classification of varieties of almost minimal degree (see \cite{EH}, \cite{F1}, \cite{F2}). However, every higher dimensional variety of almost minimal degree is simply connected while an elliptic curve is not. Moreover, P. Ionescu proved in \cite{Io4} that a non-degenerate smooth projective variety in $\P^r$ of degree $d \leq r$ is simply connected. In fact, such a variety is rationally connected.
From a topological viewpoint, it is tempting to consider a wider range of degrees in order to include non-simply connected varieties in the classification list.

In this paper, we present two different classification results for varieties of small degree.
First, we prove the following theorem, and then we further classify smooth projective varieties in $\P^r$ of degree $d \leq r+2$ in detail (see Theorems \ref{adj} and \ref{r+2adj}).

\begin{theoremalpha}[Theorems \ref{main} and \ref{r+2}]\label{thma}
Let $X \subset \P^r$ be an $n$-dimensional non-degenerate smooth projective variety of degree $d \leq r+2$. Then, one of the following holds:
\begin{enumerate}[\indent$(a)$]
 \item $X$ is rationally connected. 
 \item $X$ is a Calabi-Yau hypersurface with $n \geq 2$ $(d=r+1)$ or a $K3$ surface in $\P^4$ $(d=6)$.
 \item $X$ is a hypersurface of general type with $n \geq 2$ $(d=r+2)$.
 \item $X$ is a curve of genus $g =1,2$ $(d \geq r+1)$ or a plane quartic curve.
 \item $X$ is an elliptic scroll $(d \geq r+1)$.
\end{enumerate}
In particular, $X$ is simply connected if and only if it is from $(a)$, $(b)$, or $(c)$.
\end{theoremalpha}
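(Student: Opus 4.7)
The overall plan is to split into the curve case $n=1$ and the higher-dimensional case $n \geq 2$, and in both to use positivity of canonical-type divisors together with the degree bound $d \leq r+2$ to funnel $X$ into a short classification list.

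For $n=1$, Castelnuovo's genus bound for a non-degenerate smooth curve of degree $d$ in $\P^r$ forces the genus to satisfy $g \leq 2$ whenever $d \leq r+2$ and $r \geq 3$; the only outlier is $r=2$, $d=4$, where the smooth plane quartic has $g=3$. This immediately accounts for the curves appearing in (a) (rational normal curves) and (d) (genus $1, 2$, or plane quartic), and none of these embeddings fit elsewhere in the list.

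For $n \geq 2$, I would combine the effective double point divisor with adjunction-theoretic classification. The key input, established earlier in the paper via general projection arguments, is that
\[
(d-r+n-1)H - K_X \ \text{is linearly equivalent to an effective divisor,}
\]
where $H$ is the hyperplane class. Under our hypothesis this reads $(n+1)H - K_X$ effective. I would then analyse the adjoint divisors $K_X + jH$ for descending $j$: by Sommese--Beltrametti--Fujita--Ionescu adjunction theory, whenever some $K_X + jH$ with $j \leq n+1$ fails to be nef or fails to be big, the pair $(X,H)$ must be one of a short list of models --- projective space, hyperquadric, scroll over a curve or surface, Del Pezzo variety, Mukai variety, quadric fibration, or their general linear projections. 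For each model I compute the hyperplane degree in terms of the intrinsic invariants and check which are compatible with $d \leq r+2$. Models with $K_X$ strictly dominated by $(n-1)H$ are rationally connected and fall into (a); the borderline ranges $d = r+1$ and $d = r+2$ isolate the Calabi--Yau hypersurface and $K3$ in $\P^4$ of (b), the general type hypersurface of (c), and the elliptic scrolls of (e).

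The delicate point is precisely $d \in \{r+1,r+2\}$: there the double point bound no longer pushes $K_X$ strictly below the adjunction threshold, so one must carefully separate Calabi--Yau, general type, and elliptic-scroll possibilities; this is what the detailed Theorems \ref{main} and \ref{r+2} carry out. The simple-connectedness addendum then follows from standard facts: rationally connected smooth projective varieties are simply connected (case (a)); smooth hypersurfaces of dimension $\geq 2$ and the $K3$ surface in $\P^4$ are simply connected (cases (b) and (c)); while smooth curves of positive genus and scrolls over elliptic curves have infinite abelianised $\pi_1$, hence are not simply connected, covering the failure of simple connectedness in (d) and (e).
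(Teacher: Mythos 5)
Your curve case and the simple-connectedness addendum are fine, but the core of the higher-dimensional argument has two serious problems. First, your ``key input'' is misquoted and too weak. The double point divisor from inner projection (Noma's theorem, which is what the paper actually uses) is $D_{inn}=-K_X+(d-r-1)H$, not $-K_X+(d-r+n-1)H$; under $d\leq r+2$ this gives $-K_X+H$ \emph{semiample} --- not merely effective --- provided $X$ is not a scroll over a curve, the Veronese surface, or a Roth variety. Mere effectivity of $(n+1)H-K_X$ carries essentially no information (it holds for plenty of varieties of general type of bounded degree) and cannot drive the classification; what the paper exploits is nefness of $-K_X+(d-r-1)H$, which for $d\leq r$ immediately gives that $X$ is Fano, and for $d=r+1,r+2$ gives nefness of $-K_X$ resp.\ $-K_X+H$, after which \emph{bigness} is extracted from the sectional genus bound $g\leq n$, $d\geq 2g+1$ (Lemma \ref{secg}) together with the numerical bigness criterion (Lemma \ref{ineq} and \cite[Theorem 2.2.15]{posI}). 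You also never confront the exceptional classes in Noma's theorem --- scrolls over curves and Roth varieties --- which is exactly where case $(e)$ and part of case $(a)$ come from and which require the separate analyses of Sections \ref{scr} and \ref{roth}.

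Second, your plan to descend through the adjoint divisors $K_X+jH$ for all $j\leq n+1$ and read off a ``short list of models'' each time one fails to be nef or big is not an available tool: adjunction theory classifies the failures only for $j$ near $n-1$ and $n-2$ (first and second reduction, ending at del Pezzo and Mukai varieties); for smaller $j$ no such list exists, and this is precisely why the paper does \emph{not} argue this way for Theorem \ref{thma} --- it proves $-K_X$ (resp.\ $-K_X+H$) nef and big directly and concludes rational connectedness from the weak Fano property, reserving the adjunction mappings for the finer Theorems \ref{adj} and \ref{r+2adj}. Finally, the genuinely delicate part of the statement --- the non-hypersurface case with $d=r+2$, which in the paper needs Theorem \ref{ne+2} (uniruledness via Zak's Castelnuovo-type bound and the classification of Castelnuovo varieties of minimal degree), an induction on dimension to prove $h^1(\mathcal{O}_X)=0$, and an upgrade from uniruled to rationally connected via \cite[Corollary 4.5]{Fu} and the fibration analysis --- is deferred in your write-up to ``the detailed Theorems \ref{main} and \ref{r+2}'', which is circular, since those two theorems together \emph{are} the statement being proved.
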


This type of result is deeply related to the Hartshorne Conjecture on Complete Intersections (\cite{Ht}) and the Ionescu Conjecture, which predicts that if $d \leq n-1$, then $X \subset \P^r$ is a complete intersection unless $X$ is projectively equivalent to $Gr(2,5) \subset \P^9$ (see Subsection \ref{prfano} for more details). 
On the other hand, in \cite{Io4}, Ionescu answered a question posed by F. Russo and F. Zak whether a smooth projective variety $X \subset \P^r$ of degree $d \leq r$ is regular (i.e., $h^1(\mathcal{O}_X)=0$). Theorem \ref{thma} can be regarded as a generalized answer to this question.

Taking another direction, we show the following theorem in Section \ref{CY}, which is a direct generalization of a classical fact that a non-ruled surface in $\P^r$ of degree $d \leq 2(r-2)+2$ is K3 (see e.g., \cite[Lemmas 1.3 and 1.6]{Bui}). 

\begin{theoremalpha}\label{ne+2}
Let $X \subset \P^r$ be a non-degenerate smooth projective variety of dimension $n$, codimension $e$, and degree $d$. If $d \leq ne+2$, then either $X$ is a uniruled variety or $d=ne+2$ and $X$ is a Calabi-Yau variety.
\end{theoremalpha}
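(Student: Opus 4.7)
The plan is to extract numerical information about $K_X$ from Castelnuovo's genus bound applied to a general curve section, and then combine this with the pseudoeffectivity of $K_X$ that non-uniruledness forces via BDPP.

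First I would take a general linear curve section $C := X \cap H^{n-1}$, where $H$ denotes the hyperplane class. By Bertini together with the standard fact that a general linear section of a non-degenerate projective variety remains non-degenerate in its linear span, $C$ is a smooth irreducible non-degenerate curve of degree $d$ in $\P^{e+1}$. Castelnuovo's genus bound then yields $g(C) \leq \pi(d, e+1)$, while iterated adjunction gives
\[
2 g(C) - 2 \;=\; (K_X + (n-1)H) \cdot H^{n-1} \;=\; K_X \cdot H^{n-1} + (n-1)d.
\]
Combining these produces the key estimate $K_X \cdot H^{n-1} \leq 2\pi(d, e+1) - 2 - (n-1)d$.

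The heart of the argument is then a short, completely elementary computation with the right-hand side. Writing $d - 1 = me + \epsilon$ with $0 \leq \epsilon \leq e - 1$, a case check shows that for $d \leq ne + 2$ one always has $2\pi(d,e+1) - 2 - (n-1)d \leq 0$, with equality exactly when $d = ne + 2$. (The case $e = 1$, where $X$ is a hypersurface, is handled directly from $K_X = (d-n-2)H$.) If $X$ is not uniruled, then $K_X$ is pseudoeffective by BDPP, and since $H^{n-1}$ is a movable curve class, this forces $K_X \cdot H^{n-1} \geq 0$. Combining the two inequalities therefore leaves exactly one possibility: $d = ne + 2$ and $K_X \cdot H^{n-1} = 0$.

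Finally I would upgrade the vanishing $K_X \cdot H^{n-1} = 0$ to the Calabi-Yau conclusion $K_X \equiv 0$. Restricting to a general surface section $S = X \cap H^{n-2}$ in $\P^{e+2}$, the divisor $K_X|_S$ is pseudoeffective on $S$ with $H|_S$-degree zero; Zariski decomposition combined with the Hodge index theorem on $S$ then forces $K_X|_S \equiv 0$. Lefschetz-type injectivity of the restriction on numerical classes propagates this triviality back to $X$ for $n \geq 3$, while $n = 1, 2$ are classical. The main obstacle I anticipate is precisely this last paragraph: the passage from the purely numerical estimate to a genuine triviality of $K_X$ without invoking the full abundance conjecture requires some care, whereas the Castelnuovo-plus-adjunction computation that fills the rest of the proof is essentially routine.
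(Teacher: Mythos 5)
The first two thirds of your argument --- Castelnuovo's bound on a general curve section, adjunction to get $K_X\cdot H^{n-1}\le 2\pi(d,e+1)-2-(n-1)d$, and the elementary verification that the right-hand side is $\le 0$ for $d\le ne+2$ with equality forcing $d=ne+2$ --- is exactly the computation the paper imports from Zak's proof of \cite[Corollary 1.6]{Z}, and your appeal to BDPP is just the contrapositive of the Miyaoka--Mori criterion ($K_X\cdot H^{n-1}<0$ implies uniruled) that the paper uses; this part is sound. The genuine gap is the final paragraph. From $K_X\cdot H^{n-1}=0$ and pseudoeffectivity your surface-section/Zariski/Hodge-index argument can indeed deliver the numerical triviality $K_X\equiv 0$, but that is strictly weaker than the Calabi--Yau conclusion as the paper defines it, namely $\mathcal{O}_X(K_X)=\mathcal{O}_X$ \emph{and} $h^i(\mathcal{O}_X)=0$ for $0<i<n$. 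Abelian varieties, bielliptic surfaces, Enriques surfaces, and hyperk\"ahler manifolds all have $K_X\equiv 0$ without being Calabi--Yau in this sense, and no Lefschetz or Zariski-decomposition argument will exclude them: the obstruction is not numerical, so the ``care'' you flag at the end is not a technical refinement but a missing idea.

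What the paper does instead is exploit the \emph{equality case} of Castelnuovo's bound structurally. When $d=ne+2$ and $K_X\cdot H^{n-1}=0$, one has $2g-2=(n-1)(ne+2)=2\binom{n}{2}e+2n-2$, which is exactly $2\pi(d,e+1)-2$ (here $m=n$, $\epsilon=1$), so the general curve section is a Castelnuovo curve; by \cite[p.67]{Hr} or \cite[Proposition 3.13]{Il}, $X$ itself is then a Castelnuovo variety. Castelnuovo varieties are arithmetically Cohen--Macaulay, which gives $h^i(\mathcal{O}_X)=0$ for $0<i<n$, and Harris's classification in the extremal-degree case $d=ne+2$ shows $X$ is either an anticanonical divisor in a rational normal scroll or a complete intersection of type $(2,n+1)$ (the Veronese-cone case is excluded by a divisibility condition on $d$); adjunction then yields $\mathcal{O}_X(K_X)=\mathcal{O}_X$. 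This is precisely Lemma \ref{mincas}, and it --- not any numerical statement about $K_X$ --- is what delivers the Calabi--Yau conclusion.
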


Although this result is rather coarse, the assumption on the degree is weaker than the previous one.
Furthermore, the appearance of Calabi-Yau varieties as an extremal case of small degree varieties seems interesting from a birational geometric viewpoint.
After completing this manuscript, the authors learned from M. Mella that he also proved the uniruledness of a given variety under the assumption $d \leq ne+1$ in \cite[Appendix]{Me}.

Turning to details, we now explain our techniques and ideas. 
Although systematic studies of adjunction mappings are usually sufficient for classification in lower dimensional varieties, this method may not be directly generalized to higher dimensions. Thus, we first show some geometric properties of small degree varieties (Theorem \ref{thma}, more precisely, Theorems \ref{main} and \ref{r+2}). It is an interesting problem in its own right to determine embedded structures of projective varieties via their intrinsic properties or vice versa.
In this context, we directly prove some positivity properties of anticanonical divisors using A. Noma's work (\cite{N}; see Section \ref{posdpdsec}) on the double point divisor from inner projection. The Castelnuovo's bound of the sectional genus from the embedded structure is also used in studying the positivity of anticanonical divisors (see Lemma \ref{secg}). Using these methods as well as recent developments of higher dimensional algebraic geometry, we obtain a rough classification, as in Theorem \ref{thma}.
This result for varieties in $\P^r$ of degree $d \leq r+1$ has already a consequence of the finite generation of Cox rings of small degree varieties (Theorem \ref{cox}). It seems difficult to derive these geometric properties from Ionescu's classification (\cite[Theorem I]{Io2}) of varieties in $\P^r$ of codimension $e$ and degree $d \leq 2e+2$ or similar results based on adjunction theory. 
Next, we study adjunction mappings  (see Section \ref{adjs}), as in \cite{Io2} and \cite{Io4}, for the complete classification  (see Theorems \ref{adj} and \ref{r+2adj}). Due to our investigation of geometric properties of small degree varieties, we can simplify arguments by excluding many unnecessary cases.
We give many examples in Subsections \ref{exs} and \ref{prfano} and Section \ref{r+2s} showing that varieties of a given type indeed exist and that the bounds for invariants are optimal.

Concerning the organization of our arguments, we first deal with the case $d \leq r+1$ from Section \ref{posdpdsec} to Section \ref{adjs}, and then extend these arguments to the case $d = r+2$ in Section \ref{r+2s}.
Since the techniques in analyzing geometric properties and those in studying adjunction mappings have different natures, we explain two approaches separately for the case $d \leq r+1$ in Sections \ref{posdpdsec} and \ref{adjs} in detail. 
However, to analyze geometric properties for the case $d=r+2$, we also have to study adjunction mappings.

In Section \ref{CY}, we turn to the proof of Theorem \ref{ne+2}. 
We combine Zak's result (\cite[Corollary 1.6]{Z}) with the classification of Castelnuovo varieties of minimal degree (Lemma \ref{mincas}).
Note that there is an Enriques surface $S \subset \P^r$ of degree $d=2(r-2)+3$ that is neither uniruled nor Calabi-Yau (see Remark \ref{enrem}).

We close the introduction by posing two problems concerning the classification of varieties of a wider range of degrees.

\begin{problemalpha}\label{ein}
Classify non-degenerate smooth projective varieties $X \subset \P^r$ of dimension $n$ and degree $d \leq r+n+1$.
\end{problemalpha}

\begin{problemalpha}\label{buium}
Classify non-degenerate smooth projective varieties $X \subset \P^r$ of dimension $n$ and degree $d \leq nr+1$ that are not uniruled.
\end{problemalpha}

We remark that  L. Ein and A. Buium solved Problems \ref{ein}  and \ref{buium} for the surface case in \cite{E} and \cite{Bui}, respectively.

\subsection*{Conventions}
Throughout the paper, we work over the complex number field $\C$, and we use the following terminologies.
\begin{enumerate}[(1)]
\item A smooth projective variety $X$ is called resp. \emph{Fano} or \emph{weak Fano} if the anticanonical divisor $-K_X$ is resp. ample or nef and big.
\item A smooth projective variety $X$ is called \emph{Calabi-Yau} if $\mathcal{O}_X(K_X) = \mathcal{O}_X$ and $h^i(\mathcal{O}_X)=0$ for $0 < i < \dim X$.
\item A variety $X$ is called \emph{uniruled} if for any general point $x$ on $X$, there is a rational curve passing through $x$, and it is called \emph{rationally connected} if for any general points $x_1$ and $x_2$ on $X$, there is a rational curve connecting $x_1$ and $x_2$.  
\end{enumerate}

\subsection*{Acknowledgments.} We would like to thank Paltin Ionescu for his lecture series at KAIST in 2011, where we learned the results in \cite{Io4}. We are very grateful to Atsushi Noma who gave lectures on his paper \cite{N} at KAIST in 2012 and sent us the final version of \cite{N}. It is also our pleasure to express deep gratitude to Fyodor Zak for interesting discussions and valuable suggestions for improving many results from which we could obtain Theorem \ref{ne+2} and Theorem \ref{adj}. 
We also wish to thank the anonymous referee for a thorough report and helpful suggestions.

\section{Scrolls}\label{scr}

In this section, we collect basic facts on scrolls. A vector bundle $E$ on a smooth projective curve $C$ is called \emph{very ample} if the tautological line bundle $\mathcal{O}_{\P_C (E)}(1)$ is very ample on $\P_C (E)$. For a very ample vector bundle $E$ of rank $n$ on $C$, we have an embedding $\P_C (E) \subset \P^r$ given by a linear subsystem of $|\mathcal{O}_{\P_C (E)}(1)|$. We call $\P_C (E)$ a \emph{scroll over $C$}.
Note that
$$
h^0(\P_C (E), \mathcal{O}_{\P_C (E)}(1)) = h^0(C, E) \text{ and }  d = \deg_{\P^r}(\P_C (E))= \deg_C (\det E).
$$
A scroll is called \emph{rational} (resp. \emph{elliptic}) if it is defined over a rational (resp. an elliptic) curve.  A scroll is simply connected (and rationally connected) if and only if it is a rational scroll. Throughout the paper, we assume that $n \geq 2$ (and hence, $r \geq 3$) for any scroll.

The main result of this section is the classification of scrolls of small degree, which is an easy consequence of \cite{IT2}.

\begin{theorem}\label{minsc}
Let $X \subset \P^r$ be a non-degenerate scroll of degree $d \leq r+2$. Then either $X$ is a rational scroll or  it is an elliptic scroll of degree $d \geq r+1$.
\end{theorem}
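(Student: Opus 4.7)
The plan is to combine the non-degeneracy of $X \subset \P^r$ with Riemann--Roch on the base curve $C$ in order to force $g := g(C) \leq 1$, and then to handle the two remaining cases separately. Writing $\pi \colon X = \P_C(E) \to C$ for the bundle projection, we have $\pi_\ast \mathcal{O}_X(1) = E$, so non-degeneracy yields
\[
h^0(C, E) = h^0(X, \mathcal{O}_X(1)) \geq r + 1.
\]
Combined with Riemann--Roch on $C$, namely $h^0(C, E) - h^1(C, E) = d - n(g-1)$, this gives
\[
d \geq r + 1 + n(g-1) - h^1(C, E),
\]
and the whole problem reduces to controlling $h^1(C, E)$ for the very ample bundle $E$.

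For $g = 1$ the control is elementary. Since $K_C = \mathcal{O}_C$, Serre duality gives $h^1(E) = h^0(E^\vee)$. A nonzero section of $E^\vee$ would produce, after saturating the image of the dual map $E \to \mathcal{O}_C$, a quotient line bundle $L$ of $E$ with $\deg L \leq 0$. But every quotient line bundle of $E$ arises as the pullback of $\mathcal{O}_X(1)$ to a section $C \hookrightarrow X$, so such an $L$ would be very ample on the elliptic curve $C$, contradicting the fact that very ample line bundles on an elliptic curve have degree at least $3$. Hence $h^1(C, E) = 0$, and the displayed inequality yields $d \geq r+1$, giving exactly the elliptic scroll case.

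For $g \geq 2$ the analogous quotient-line-bundle argument breaks down, because the canonical bundle $K_C$ itself can be very ample. This is the point at which I would appeal to the classification of very ample bundles on curves in \cite{IT2}. The content of that result needed here is the sharpened bound $d \geq r + 1 + n(g-1)$ for non-degenerate scrolls (equivalently, the vanishing $h^1(C, E) = 0$ in this regime). Granting it, $n \geq 2$ and $g \geq 2$ force $d \geq r+3$, contradicting the hypothesis $d \leq r+2$. The remaining case $g = 0$ is precisely that $X$ is a rational scroll.

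The main obstacle is the $g \geq 2$ step: the cohomological argument that works effortlessly for elliptic base curves does not extend, and one must appeal to the finer results of \cite{IT2} to rule out higher-genus base curves. Once that input is in place, the rest of the proof is the Riemann--Roch inequality on $C$ together with the elementary Serre-duality argument in the elliptic case, which is why the theorem is advertised as an easy consequence of \cite{IT2}.
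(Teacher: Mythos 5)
Your Riemann--Roch reduction and your treatment of the elliptic case are fine: the Serre-duality argument that $h^1(C,E)=0$ for a very ample bundle on an elliptic curve (a nonzero section of $E^\vee$ would give a quotient line bundle of $E$ of degree $\leq 0$, which would have to be very ample on $C$) is correct and recovers $d\geq r+1$, the content of Proposition \ref{scrdeg}, which the paper instead quotes from Ionescu. The gap is exactly at the step you flag as the main obstacle. The statement you attribute to \cite{IT2} --- that $d\geq r+1+n(g-1)$ for non-degenerate scrolls, equivalently that $h^1(C,E)=0$ --- is not the main theorem of \cite{IT2}, and it is false. Take $C$ non-hyperelliptic of genus $3$ and $E=K_C\oplus K_C$: then $E$ is very ample (the scroll is $C\times\P^1$ embedded by the Segre product of the canonical embedding with $\mathcal{O}_{\P^1}(1)$), $h^0(E)=6$ so the linearly normal model lies in $\P^5$, and $d=\deg(\det E)=8$, whereas your bound would demand $d\geq r+1+n(g-1)=10$; correspondingly $h^1(C,E)=h^0(\mathcal{O}_C^{\oplus 2})=2\neq 0$. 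So the input you propose to ``grant'' cannot be granted, and the case $g\geq 2$ remains open in your argument.

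What \cite{IT2} actually supplies is the inequality $h^0(E)+n-2\leq h^0(\det E)$, which transfers the problem from the rank-$n$ bundle $E$ to the \emph{line bundle} $\det E$ of degree $d$ on $C$, where Clifford's theorem is available. The paper's Proposition \ref{screll} then argues: if $\det E$ is special, Clifford gives $r+1\leq \frac{d}{2}-n+3\leq\frac{r+2}{2}-n+3$, i.e.\ $r+2n\leq 6$, impossible for $n\geq 2$; if $\det E$ is non-special, Riemann--Roch applied to $\det E$ gives $g+n\leq 4$, forcing $g=n=2$ with equality in the Ionescu--Toma inequality, and \cite{IT2} shows equality cannot hold for $g=2$. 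To repair your proof you would need to replace the false $h^1$-vanishing claim by this (or an equivalent) use of the determinant bundle to eliminate $g\geq 2$.
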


\begin{proof}
It follows from Propositions \ref{scrdeg} and \ref{screll}.
\end{proof}

By Theorem \ref{minsc}, if $X \subset \P^r$ is an elliptic scroll of degree $d=r+1$, then $X$ is linearly normal.


\begin{proposition}[{\cite[Corollary 3]{Io4}}]\label{scrdeg}
Let $C$ be a smooth projective curve of positive genus, and let $E$ be a very ample vector bundle on $C$, which defines a linearly normal scroll $\P_{C}(E) \subset \P^r$ over $C$ of degree $d$. Then, we have $d \geq r+1$.
\end{proposition}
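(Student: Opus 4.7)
The claim is equivalent, via the identifications $h^0(X, \mathcal{O}_X(1)) = h^0(C, E) = r+1$ and $d = \deg(\det E)$, to the inequality $\deg E \geq h^0(E)$ for every very ample vector bundle $E$ on a smooth curve $C$ of positive genus. My plan is to prove this by induction on the rank $n$ of $E$.

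The base case $n = 1$ treats a very ample line bundle $L$: if $L$ is non-special then Riemann--Roch gives $h^0(L) = d - g + 1 \leq d$ since $g \geq 1$, while if $L$ is special then Clifford's theorem gives $h^0(L) \leq d/2 + 1 \leq d$ (very ampleness on a positive-genus curve forces $d$ large enough).

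For the inductive step with $n \geq 2$, I would use that $E$ is globally generated of rank $\geq 2$, so a general section $s \in H^0(E)$ is nowhere vanishing on $C$ and produces a short exact sequence of vector bundles
\begin{equation*}
0 \longrightarrow \mathcal{O}_C \longrightarrow E \longrightarrow E' \longrightarrow 0
\end{equation*}
with $\mathrm{rank}(E') = n-1$ and $\deg E' = d$. The crucial point is that $\P_C(E') \subset \P_C(E) = X$ is the zero divisor of $s$ regarded as a section of $\mathcal{O}_X(1)$, so $\mathcal{O}_X(1)|_{\P_C(E')} = \mathcal{O}_{\P_C(E')}(1)$ is very ample as the restriction of a very ample line bundle; consequently $E'$ is very ample of rank $n-1$. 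The induction hypothesis then yields $d \geq h^0(E')$, reducing the problem to showing $h^0(E') \geq r+1$.

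Taking cohomology of the above sequence, $h^0(E') = r + \mathrm{rk}\,\delta$, where $\delta: H^0(E') \to H^1(\mathcal{O}_C) \cong \C^g$ is the connecting homomorphism; equivalently, one needs the cup-product $\cup s : H^1(\mathcal{O}_C) \to H^1(E)$ to have nontrivial kernel. For $g = 1$ this is effortless: $E$ very ample is ample on the elliptic curve $C$, so any nonzero morphism $E \to \mathcal{O}_C = \omega_C$ would produce a quotient line bundle of nonpositive degree, contradicting the ampleness of $E$; hence $h^1(E) = h^0(E^\vee \otimes \omega_C) = 0$ by Serre duality and Riemann--Roch pins down $d = h^0(E) = r + 1$ directly. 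The main obstacle is the case $g \geq 2$, where $h^1(E)$ need not vanish and $\cup s$ could a priori be injective; there I would argue via the Serre dual map $\mathrm{Hom}(E, \omega_C) \to H^0(\omega_C)$, $\phi \mapsto \phi \circ s$, showing that it fails to be surjective for a well-chosen $s$, since otherwise every global section of $\omega_C$ would factor through a quotient line bundle $E \twoheadrightarrow L \hookrightarrow \omega_C$, which together with the very ampleness of $E$ and $g \geq 2$ contradicts a dimension count on the variety of such line-bundle quotients of $E$.
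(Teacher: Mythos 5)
Your reduction is set up correctly: linear normality gives $r+1=h^0(C,E)$ and $d=\deg(\det E)$, the zero scheme of a nowhere-vanishing $s\in H^0(E)=H^0(X,\mathcal{O}_X(1))$ is indeed $\P_C(E')$ with $E'=E/\mathcal{O}_C\cdot s$ very ample of rank $n-1$ and degree $d$, and the long exact sequence correctly reduces the inductive step to finding a nowhere-vanishing $s$ for which $\cup s\colon H^1(\mathcal{O}_C)\to H^1(E)$ fails to be injective. The base case and the genus-one case are fine. But the case $g\geq 2$ --- which is the entire content of the statement once $n\geq 2$ --- is only sketched, and the sketch is not a proof. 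You assert that surjectivity of $\phi\mapsto\phi\circ s$ onto $H^0(\omega_C)$ would contradict ``a dimension count on the variety of line-bundle quotients of $E$,'' but no such count is performed, and it is far from clear that one exists: the locus $W$ of somewhere-vanishing sections has codimension only $n-1$ in $H^0(E)$, while the degeneracy locus $Z=\{s:\cup s\text{ not injective}\}$ has expected codimension $h^1(E)-g+1$, so whenever $h^1(E)\geq n+g-2$ nothing prevents $Z\subseteq W$ on dimension grounds. Note also that for $n=2$ one has $E/\mathcal{O}_C\cdot s\cong\det E$ for \emph{every} nowhere-vanishing $s$, so your inductive step is literally the assertion $h^0(\det E)\geq h^0(E)$ for very ample rank-two bundles; this is the $n=2$ case of the Ionescu--Toma inequality (\ref{ITineq}), the main theorem of \cite{IT2}, which this paper imports as a substantial external result. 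You are thus implicitly re-deriving a known hard theorem from an unexecuted count.

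The choice of $s$ also genuinely matters, so ``a well-chosen $s$'' cannot be replaced by a generic one: take $C$ a plane quartic ($g=3$) and $E=\omega_C^{\oplus 3}$, so $h^0(E)=9$ and $d=12$. For $s=(s_1,s_2,s_3)$ the kernel of $\cup s$ is $\{s_1,s_2,s_3\}^{\perp}$ under the Serre pairing, which vanishes when the $s_i$ are linearly independent; a generic nowhere-vanishing $s$ therefore gives $h^0(E')=8<h^0(E)$, and the induction only yields $d\geq 8$ rather than the required $d\geq 9$. (A good $s$ does exist here --- take the $s_i$ dependent but without common zeros --- but exhibiting one in general is exactly the unproven point.) For comparison, the paper does not prove this proposition but cites \cite[Corollary 3]{Io4}; the argument behind the citation, which is also how the paper proves the companion Proposition \ref{screll}, is a few lines from (\ref{ITineq}): $r+1=h^0(E)\leq h^0(\det E)-n+2$, and then Riemann--Roch in the non-special case (resp.\ Clifford in the special case) bounds $h^0(\det E)$ by $d-g+1$ (resp.\ $d/2+1$), giving $r+1\leq d$ for $g\geq 1$ and $n\geq 2$. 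I would recommend either quoting (\ref{ITineq}) or supplying a complete argument for the non-injectivity of $\cup s$ when $g\geq 2$.
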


We classify scrolls of minimal degree over a curve of positive genus.

\begin{proposition}\label{screll}
Let $C$ be a smooth projective curve of positive genus, and let $E$ be a very ample vector bundle on $C$, which defines a linearly normal scroll $\P_{C}(E) \subset \P^r$ over $C$ of degree $d$. If $d \leq r+2$, then $C$ is an elliptic curve.
\end{proposition}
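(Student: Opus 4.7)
The plan is to assume for contradiction that $g := g(C) \geq 2$ and derive $d \geq r+3$, contradicting $d \leq r+2$. By Proposition~\ref{scrdeg} we already have $d \geq r+1$, so $d \in \{r+1, r+2\}$. The crucial identity comes from Riemann--Roch on $C$:
\[
r + 1 = h^0(C, E) = d - n(g-1) + h^1(C, E),
\]
i.e.\ $d = r + 1 + n(g-1) - h^1(C, E)$. Thus the core task is to show that $h^1(C, E) = 0$; from this one gets $d = r + 1 + n(g-1) \geq r + 3$ for $n \geq 2$, $g \geq 2$, which is the desired contradiction.

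To prove the vanishing, I will use Serre duality to rewrite $h^1(C, E) = \dim \operatorname{Hom}(E, K_C)$. Supposing a nonzero morphism $\varphi \colon E \to K_C$ exists, its image is a sub-line-bundle of $K_C$ of the form $K_C(-D)$ for some effective divisor $D \geq 0$, so $E \twoheadrightarrow K_C(-D)$ is a quotient line bundle. The key input is that every quotient line bundle $L$ of a very ample bundle $E$ is itself very ample on $C$: the quotient corresponds to a section $\sigma_L \colon C \hookrightarrow \P_C(E)$ with $\sigma_L^* \mathcal{O}_{\P_C(E)}(1) \cong L$, and composing with the embedding $\P_C(E) \hookrightarrow \P^r$ embeds $C$ in $\P^r$ via (a subsystem of) $|L|$, so $L$ must be very ample. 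Thus $K_C(-D)$ would be a very ample line bundle on $C$ of degree at most $2g-2$.

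For $g = 2$ this is immediately impossible: every smooth genus $2$ curve is hyperelliptic, and every very ample line bundle on such a curve has degree at least $2g+1 = 5$, whereas $\deg K_C(-D) \leq 2$. For $g \geq 3$ the same conclusion holds, but the argument is more delicate, since on non-hyperelliptic curves $K_C$ itself is very ample of degree $2g-2$; this case is settled by invoking the main theorem of Ionescu--Toma \cite{IT2} on very ample vector bundles on curves.

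The hard part will be the non-hyperelliptic case with $g \geq 3$, where one must rule out $K_C(-D)$ (and in particular $K_C$ itself when $D = 0$) as a very ample quotient of $E$ under the given numerical constraints. This is exactly where the refined structural result of \cite{IT2} enters. Once this obstruction is cleared, the vanishing $h^1(C, E) = 0$ and the consequent contradiction $d \geq r+3$ follow immediately from the Riemann--Roch identity, forcing $g \leq 1$ and hence $g = 1$.
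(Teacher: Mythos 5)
Your reduction is clean up to a point: Riemann--Roch gives $d = r+1+n(g-1)-h^1(C,E)$, so everything hinges on showing $h^1(C,E)=0$, and via Serre duality this means excluding a surjection $E \twoheadrightarrow K_C(-D)$ onto a very ample quotient line bundle of degree at most $2g-2$. The quotient-line-bundle observation is correct (the quotient corresponds to a section of $\P_C(E)\to C$, and the restriction of the very ample tautological bundle to it is very ample), and the genus $2$ case is genuinely disposed of. But the case you label ``more delicate'' is not a loose end --- it is the entire content of the proposition, and your proposal does not actually address it. On a non-hyperelliptic curve of genus $g\geq 3$, $K_C$ itself (and $K_C(-D)$ for suitable small $D$) \emph{is} very ample, so the degree obstruction evaporates; and the main theorem of \cite{IT2} that you invoke is the inequality $h^0(E)+n-2\leq h^0(\det E)$, which is a statement about $\det E$ and says nothing about quotient line bundles of $E$ or about $h^1(E)$. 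No derivation is offered for how it would rule out the surjection $E\twoheadrightarrow K_C(-D)$. Note also that your own numerology makes the stakes clear: under the contradiction hypotheses $g\geq 2$, $n\geq 2$, $d\leq r+2$, Riemann--Roch forces $h^1(C,E)\geq n(g-1)-1\geq 1$, so the vanishing you need is exactly what the hypotheses are fighting against; the missing step is the whole proof.

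For comparison, the paper's argument sidesteps quotient bundles entirely. It applies the inequality $h^0(E)+n-2\leq h^0(\det E)$ of \cite{IT2} and then bounds $h^0(\det E)$ from above: by Clifford's inequality if $\det E$ is special, giving $r+2n\leq 6$, a contradiction; and by Riemann--Roch if $\det E$ is nonspecial, giving $g+n\leq 4$, which leaves only the boundary case $g=n=2$ with equality in the \cite{IT2} inequality, excluded by the equality analysis in \cite{IT2}. If you want to salvage your route, you would need an independent argument that a very ample bundle computing $h^0(E)=r+1$ with $d\leq r+2$ admits no nonzero map to $K_C$ on a non-hyperelliptic curve; as written, that step is asserted rather than proved.
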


\begin{proof}
Suppose that the genus $g=g(C) \geq 2$. Recall the main theorem of \cite{IT2}:
\begin{equation}\label{ITineq}
h^0(E) + n -2 \leq h^0(\det E).
\end{equation}
According to \cite[Corollary 4]{IT2}, we consider two cases. First, if $\det E$ is special (i.e., $h^1(C, \det E) \neq 0$), then by the Clifford Inequality, we have
$$r+1=h^0(E) \leq h^0(\det E) - n + 2 \leq \frac{d}{2} -n + 3 = \frac{r+2}{2} -n + 3.$$
Thus, we obtain $r + 2n\leq 6$, which is a contradiction. Next, if $\det E$ is non-special, then by the Riemann-Roch Formula, we have
$$r+1 = h^0(E) \leq h^0(\det E ) - n + 2 = d - g - n + 3 \leq r - g - n + 5 .$$
Thus, we obtain $g+n \leq 4$. Then, we must have $g=n=2$, and the equality of (\ref{ITineq}) holds. However, the equality of (\ref{ITineq}) does not hold when $g=2$ (see \cite{IT2}). We get a contradiction.
Hence, $g =1$.
\end{proof}

The following lemma will be used in Section \ref{adjs}. Note that there is an $n$-dimensional scroll in $\P^{2n-1}$ (e.g., Segre embedding of $\P^1 \times \P^{n-1}$).

\begin{lemma}\label{scrcod}
Let $X \subset \P^r$ be an $n$-dimensional scroll over a curve of genus $g$. Then, $r \geq 2n-1$ for $g =0$, and $r \geq 2n$ for $g \geq 1$.
\end{lemma}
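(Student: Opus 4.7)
The plan is to establish the two inequalities separately.

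For the bound $r \geq 2n-1$, I take two distinct fibers $F_t$ and $F_{t'}$ of the bundle projection $\pi \colon X = \P_C(E) \to C$. These are disjoint in $X$ (and hence in $\P^r$), and each is linearly embedded as a $\P^{n-1} \subset \P^r$. Since two disjoint linear $(n-1)$-planes span a $(2n-1)$-plane, we conclude $r \geq 2n-1$.

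For the sharper bound when $g \geq 1$, I will argue by contradiction, assuming $r = 2n-1$. Then every pair of distinct fibers spans all of $\P^{2n-1}$, so the corresponding $n$-dimensional subspaces $W_t \subset \C^{2n}$ are pairwise transverse. The family $\{W_t\}_{t \in C}$ defines a morphism $\alpha \colon C \to Gr(n, 2n)$, which after composition with the Pl\"ucker embedding $Gr(n, 2n) \hookrightarrow \P^{N-1}$ (where $N = \binom{2n}{n}$) becomes a non-constant morphism $C \to \P^{N-1}$. Set $L = \alpha^* \mathcal{O}_{\P^{N-1}}(1)$; then $\deg L \geq 1$. The wedge pairing $\bigwedge^n \C^{2n} \otimes \bigwedge^n \C^{2n} \to \bigwedge^{2n} \C^{2n} \cong \C$, applied to the Pl\"ucker coordinates, produces a section $\eta \in H^0(C \times C, p_1^* L \otimes p_2^* L)$ whose value at $(t, t')$ is proportional to $p_{W_t} \wedge p_{W_{t'}}$. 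By transversality, $\eta$ vanishes precisely on the diagonal $\Delta \subset C \times C$.

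Writing $\operatorname{div}(\eta) = m\Delta$ for some integer $m \geq 1$, we obtain $p_1^* L \otimes p_2^* L \cong \mathcal{O}_{C \times C}(m\Delta)$. Restricting both sides to a ruling $\{t_0\} \times C$ gives $\deg L = m$, while restricting to $\Delta \cong C$ (using $\mathcal{O}(\Delta)|_\Delta \cong T_C$ of degree $2 - 2g$) gives $2 \deg L = m(2 - 2g)$. Combining these equations yields $2m = m(2 - 2g)$, which forces $g = 0$, contradicting $g \geq 1$.

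The main obstacle will be verifying cleanly that $\eta$ vanishes exactly on $\Delta$ (and not on a larger divisor), so the identification $p_1^* L \otimes p_2^* L \cong \mathcal{O}(m \Delta)$ with $m \geq 1$ is legitimate; once this is in hand, the numerical bookkeeping with $\Delta^2 = 2 - 2g$ forces $g = 0$.
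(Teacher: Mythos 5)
Your argument is correct, but it takes a genuinely different route from the paper's. The paper disposes of the lemma in one line via the Barth--Larsen theorem: if $r\le 2n-2$ then $\Pic(X)\cong\Z$ generated by the hyperplane class, which is impossible for a scroll with $n\ge 2$ (whose Picard group has rank at least two), so $r\ge 2n-1$; and if $r\le 2n-1$ then $X$ is simply connected, which is impossible for $g\ge 1$ since $\pi_1(\P_C(E))\cong\pi_1(C)$, so $r\ge 2n$. You instead stay entirely elementary: the first bound comes from the span of two disjoint linearly embedded fibers, and the refinement comes from converting the pairwise transversality of the fibers in the extremal case $r=2n-1$ into a section $\eta$ of $p_1^*L\otimes p_2^*L$ on $C\times C$ whose divisor is $m\Delta$, after which the two restrictions $\deg L=m$ and $2\deg L=m(2-2g)$ force $g=0$. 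The step you flagged as the main obstacle is in fact unproblematic: $\eta$ is not identically zero (transversality off the diagonal), vanishes on $\Delta$ (since $W_t\cap W_t\ne 0$), and vanishes nowhere else, so its zero divisor is supported on the irreducible curve $\Delta$ and equals $m\Delta$ with $m\ge 1$, which is all the bookkeeping needs. The trade-off: the paper's proof is shorter but invokes a deep topological theorem about low-codimension subvarieties, while yours is self-contained, uses only the Pl\"ucker pairing and the self-intersection $\Delta^2=2-2g$, and explains geometrically why the extremal embedding $r=2n-1$ exists only over a rational base (a curve's worth of pairwise transverse $n$-planes in $\C^{2n}$ can only be carried by $\P^1$).
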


\begin{proof}
It follows from the Barth-Larsen Theorem (\cite[Corollaries 3.2.2 and 3.2.3]{posI}).
\end{proof}

Finally, we recall the following well known facts on elliptic scrolls.

\begin{remark}\label{exisc}
(1) (Proposition 5.2 of \cite{Io3}) There exist $n$-dimensional elliptic normal scrolls in $\P^{2n+k}$ of degree $2n+k+1$ for all $k \geq 0$.\\[1pt]
(2) (Theorem 5.1A of \cite{But}) Elliptic normal scrolls and elliptic normal curves are projectively normal.
\end{remark}

\section{Roth varieties}\label{roth}

In this section, we collect basic facts on Roth varieties (see \cite[Section 3]{Il} for more detail). Let $E = \mathcal{O}_{\P^1}(a_1) \oplus  \mathcal{O}_{\P^1}(a_2) \oplus  \cdots \oplus \mathcal{O}_{\P^1} (a_{n+1})$ be a globally generated vector bundle of rank $n+1$ on $\P^1$ with all $a_i \geq 0$. Denote $S_{a_1, \ldots, a_{n+1}}:=\P_{\P^1}(E)$, and let $\pi_1 \colon S_{a_1, \ldots, a_{n+1}} \rightarrow \P^1$ be the natural projection. Let $F$ be a fiber of $\pi_1$, and $H$ be a base point free divisor on $S_{a_1, \ldots, a_{n+1}}$ with $\mathcal{O}_{S_{a_1, \ldots, a_{n+1}}}(H)= \mathcal{O}_{\P_{\P^1}(E) }(1)$.

\begin{definition}[{\cite[Definition 3.1 and Theorem 3.7]{Il}}]
Let $n \geq 2$ be an integer. Consider the birational morphism $\pi_2 \colon S_{0,0,a_1, \ldots, a_{n-1}} \rightarrow \overline{S} \subset \P^r$ given by the complete linear system $|H|$ for $a_i \geq 1$. Then, the singular locus of $\overline{S}$ is a line $L \subset \P^r$. For every integer $b \geq 1$, take a smooth variety $\widetilde{X} \in |bH+F|$ such that $\pi_2|_{\widetilde{X}} \colon \widetilde{X} \rightarrow X:=\pi_2(\widetilde{X})$ is an isomorphism, and $L \subset X \subset \P^r$. Then, $X$ is called a \emph{Roth variety}.
\end{definition}

We have an embedding $X \subset \overline{S} \subset \P^r$. By \cite[Proposition 3.5 and Theorem 3.14 (4)]{Il}, $X \subset \P^r$ is non-degenerate and projectively normal. 

Note that $H+F$ is very ample, because $\mathcal{O}_{S_{0,0,a_1, \ldots, a_{n-1}}}(H+F)$ is the tautological line bundle of $S_{1,1,a_1+1, \ldots, a_{n-1}+1}$. Thus, $bH+F$ is very ample for every integer $b \geq 1$.

\begin{proposition}\label{rosim}
Every Roth variety is simply connected.
\end{proposition}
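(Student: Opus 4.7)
The plan is to apply the Lefschetz hyperplane theorem to $\widetilde{X}$ viewed as an ample divisor inside the smooth projective variety $S := S_{0,0,a_1,\ldots,a_{n-1}}$, and then to note that $S$ itself is simply connected because it is a projective bundle over $\P^1$.

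First, I would reduce to $\widetilde{X}$: since $\pi_2|_{\widetilde{X}} \colon \widetilde{X} \to X$ is an isomorphism by the definition of a Roth variety, it suffices to prove $\pi_1(\widetilde{X}) = 0$. Next, I would use the explicit divisor class $\widetilde{X} \in |bH+F|$. The paragraph preceding the proposition observes that $H+F$ is the tautological line bundle of $S_{1,1,a_1+1,\ldots,a_{n-1}+1}$, hence very ample on $S$, and consequently $bH+F$ is very ample on $S$ for all $b \geq 1$. Thus $\widetilde{X}$ is a smooth ample divisor in the smooth projective variety $S$ of dimension $n+1 \geq 3$ (using the standing convention $n \geq 2$).

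The Lefschetz hyperplane theorem then yields an isomorphism $\pi_1(\widetilde{X}) \xrightarrow{\cong} \pi_1(S)$, so it remains to check $\pi_1(S) = 0$. This follows immediately from the structure of $S$ as a $\P^n$-bundle $\pi_1 \colon S \to \P^1$: the long exact homotopy sequence of this fibration
\[
\pi_1(\P^n) \longrightarrow \pi_1(S) \longrightarrow \pi_1(\P^1)
\]
has vanishing outer terms, so $\pi_1(S) = 0$. Combining, $\pi_1(X) = \pi_1(\widetilde{X}) = \pi_1(S) = 0$.

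I do not anticipate any serious obstacle: the whole argument is a routine application of Lefschetz once one records that $bH+F$ is ample (which is already in the text) and that the ambient scroll over $\P^1$ is simply connected. The only mild subtlety is checking the dimension hypothesis of Lefschetz, which is covered by the paper's standing assumption $n \geq 2$ for scrolls used to build Roth varieties.
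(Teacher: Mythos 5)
Your argument is correct and is exactly the route the paper takes: its proof is a one-line citation of the Lefschetz Hyperplane Theorem, applied implicitly to the smooth ample divisor $\widetilde{X} \in |bH+F|$ inside the simply connected ambient scroll $S_{0,0,a_1,\ldots,a_{n-1}}$. You have simply written out the details the paper leaves tacit (very ampleness of $bH+F$, simple connectedness of the $\P^n$-bundle over $\P^1$, and the dimension hypothesis $\dim S = n+1 \geq 3$), all of which check out.
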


\begin{proof}
The assertion follows from the Lefschetz Hyperplane Theorem (see e.g., \cite[Theorem 3.1.21]{posI}).
\end{proof}

Recall that by \cite[Theorem 3.7]{Il}, we have
$$
\sum_{i=1}^{n-1}a_i = r - n \text{ and } d = b(r-n)+1
$$
for a linearly normal Roth variety $X \subset \P^r$ of dimension $n$ and degree $d$. Note that a Roth variety with $b=1$ is a rational scroll $S_{1, a_1, \ldots, a_{n-1}}$ (\cite[Theorem 3.14 (1)]{Il}). We can completely classify linearly normal Roth varieties $X \subset \P^r$ with $\deg_{\P^r}(X) \leq r+2$.

\begin{proposition}\label{rocl}
Let $X \subset \P^r$ be a non-degenerate linearly normal Roth variety of dimension $n$, codimension $e$, and degree $d$. If $d \leq r+2$, then one of the following holds:
\begin{enumerate}[\indent$(1)$]
 \item $b=1$.
 \item $b=2$, $d=r$, and $e=n-1$.
 \item $b=2$, $d=r+1$, and $e=n$.
 \item $b=2$, $d=r+2$, and $e=n+1$.
 \item $b=3$, and $X$ is a quartic surface.
 \item $b=3$, $d=7$, $n=3$, and $e=2$.
 \item $b=4$, and $X$ is a quintic surface.
\end{enumerate}
\end{proposition}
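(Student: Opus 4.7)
The plan is to convert the degree bound $d \leq r+2$ into a purely arithmetic inequality in $(b,n,e)$ and then enumerate.

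First I would use the two numerical identities recalled just before the statement, namely $\sum_{i=1}^{n-1} a_i = r - n = e$ and $d = be + 1$. Since every $a_i \geq 1$ and there are $n-1$ of them, the first identity gives the key lower bound $e \geq n - 1$. Substituting $d = be+1$ and $r = n+e$ into $d \leq r+2$ yields
\[
(b-1)e \leq n+1,
\]
which together with $e \geq n-1$ and $n \geq 2$ controls everything.

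Next I would run through the possibilities for $b$. The case $b=1$ imposes no further condition and yields (1) (a rational scroll, by \cite[Theorem 3.14(1)]{Il}). For $b=2$ the inequality reads $e \leq n+1$, so $e \in \{n-1, n, n+1\}$, producing $d = r, r+1, r+2$ respectively; these are (2), (3), (4). For $b=3$ I get $2(n-1) \leq 2e \leq n+1$, forcing $n \leq 3$, and the pairs $(n,e) = (2,1)$ and $(3,2)$ give (5) and (6). For $b=4$ the inequality $3(n-1) \leq 3e \leq n+1$ forces $n = 2$, $e = 1$, giving (7). Finally, $b \geq 5$ would demand $4(n-1) \leq (b-1)e \leq n+1$, i.e.\ $n \leq 5/3$, contradicting $n \geq 2$.

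To close the proof I would note existence in each listed case: given $e \geq n-1$, one can split $e = a_1 + \cdots + a_{n-1}$ with each $a_i \geq 1$, and by the observation preceding the statement $bH + F$ is very ample for every $b \geq 1$, so a smooth $\widetilde{X} \in |bH+F|$ with the prescribed $(b,n,e)$ really exists. I do not anticipate any real obstacle; the whole argument is Diophantine bookkeeping, and the only step requiring a little care is the derivation of $e \geq n-1$ from the rank-$(n+1)$ splitting of the defining bundle, since that bound is precisely what rules out $b \geq 5$ and what pins down $n$ in the $b = 3, 4$ cases.
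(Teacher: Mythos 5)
Your proposal is correct, and the enumeration part ($d=be+1\leq n+e+2$ giving $(b-1)e\leq n+1$, then case analysis on $b$) is exactly the paper's computation. Where you genuinely diverge is in the key bound $e\geq n-1$: you extract it arithmetically from the identity $e=r-n=\sum_{i=1}^{n-1}a_i$ together with $a_i\geq 1$ in the defining splitting, whereas the paper proves it topologically, observing that $e\leq n-2$ would force $\Pic(X)\simeq\Z$ by the Barth--Larsen Theorem while the Lefschetz Theorem for Picard groups (applied to $X$ as an ample divisor in the scroll $S_{0,0,a_1,\ldots,a_{n-1}}$) gives $\Pic(X)\simeq\Z\oplus\Z$; the cases $n=2,3$ are dismissed as trivial there. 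Your route is more elementary and self-contained, using only the numerical data recalled before the statement (and valid uniformly in $n$); the paper's route buys independence from the explicit splitting at the cost of heavier machinery. One small caution: your closing existence remark is not needed for the proposition as stated (it is a pure ``one of the following holds'' classification), and as written it is slightly too quick --- Bertini gives smooth members of $|bH+F|$, but the definition also requires $\pi_2|_{\widetilde{X}}$ to be an isomorphism with $L\subset X$, which is the content of \cite[Theorem 3.7]{Il} rather than of very ampleness alone.
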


\begin{proof}
First, we show that $e \geq n-1$. Since the cases $n=2$ and $3$ are trivial, we may assume that $n \geq 4$. If $e \leq n-2$, then by the Barth-Larsen Theorem (\cite[Corollary 3.2.3]{posI}), $\Pic(X) \simeq \Z$, but by the Lefschetz Theorem for Picard Group (\cite[Example 3.1.25]{posI}), $\Pic(X) \simeq \Z \oplus \Z$, and hence, we get a contradiction. Thus, $e \geq n-1$. Since $d=be+1 \leq n+e+2$, we obtain $e \leq \frac{n+1}{b-1}$.
Now, we have $n-1 \leq e \leq \frac{n+1}{b-1}$ so that $(n-1)b\leq 2n$. If $b \geq 3$, then we get (5)--(7).
We now assume that $b=2$. We have $d=2e+1 \leq n+e+2$, so we obtain $e \leq n+1$. Thus, we immediately get (2)--(4).
\end{proof}

\begin{remark}\label{ropnrem}
Let $X \subset \P^r$ be a Roth variety with $b=2$ and $d\leq r+1$. By considering the Picard group and linearly normality in small codimension, we can easily check that every linearly normal Roth variety $X \subset \P^r$ with $b=2$ and $d \leq r$ cannot have isomorphic projection. Thus, $X$ is automatically linearly normal, and hence, it is projectively normal.
\end{remark}

Now, we investigate the rational connectedness of Roth varieties.

\begin{proposition}\label{roraco}
Let $X \subset \P^r$ be a linearly normal Roth variety of $\dim(X)=n$ and $\deg(X)=b(r-n)+1$. Then, $X$ is rationally connected if and only if $b \leq n$.
\end{proposition}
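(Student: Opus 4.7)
The plan is to analyze the restricted fibration $\pi := \pi_1|_{\widetilde{X}} \colon \widetilde{X} \to \P^1$ coming from the ambient projective bundle $S := S_{0,0,a_1,\ldots,a_{n-1}}$. Since $\pi_2|_{\widetilde{X}} \colon \widetilde{X} \to X$ is an isomorphism, $X$ is rationally connected if and only if $\widetilde{X}$ is, so I work throughout with $\widetilde{X}$. On a fiber of $\pi_1$ distinct from $F$, which is a copy of $\P^n$, the divisor $F$ restricts to zero while $H$ restricts to the hyperplane class, so a general fiber of $\pi$ is cut out in $\P^n$ by a divisor of class $bH$. Generic smoothness applied to the dominant morphism $\pi$ between smooth varieties shows that a general fiber of $\pi$ is in fact a \emph{smooth} hypersurface of degree $b$ in $\P^n$.

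For the ``if'' direction, when $b \leq n$ a general fiber of $\pi$ is a smooth Fano hypersurface in $\P^n$, hence rationally connected by the theorem of Koll\'ar--Miyaoka--Mori (Campana). Since $\P^1$ is rationally connected, the theorem of Graber--Harris--Starr then yields that $\widetilde{X}$ is rationally connected.

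For the ``only if'' direction, I would show that $\widetilde{X}$ has non-negative Kodaira dimension whenever $b \geq n+1$. The standard formula for the canonical bundle of a projective bundle, combined with $\sum_{i=1}^{n-1} a_i = r-n$ and $\deg K_{\P^1} = -2$, gives
\[
K_S = -(n+1)H + (r-n-2)F;
\]
adjunction then produces
\[
K_{\widetilde{X}} = \bigl((b-n-1)H + (r-n-1)F\bigr)|_{\widetilde{X}}.
\]
Since each $a_i \geq 1$ in the definition of a Roth variety, $r-n = \sum a_i \geq n-1 \geq 1$, so $r-n-1 \geq 0$; coupled with $b - n - 1 \geq 0$, both coefficients are non-negative. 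As $H|_{\widetilde{X}}$ is very ample and $F|_{\widetilde{X}}$ is an effective fiber divisor of $\pi$, the class $K_{\widetilde{X}}$ is effective, so $\kappa(\widetilde{X}) \geq 0$ and $\widetilde{X}$ is not uniruled, let alone rationally connected.

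The heavy lifting is in the ``if'' direction, where the Graber--Harris--Starr theorem (together with rational connectedness of smooth Fano varieties) is indispensable; the converse, by contrast, reduces to a routine canonical class computation on the ambient scroll.
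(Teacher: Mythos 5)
Your proof is correct. The ``if'' direction coincides with the paper's: both identify the general fiber of $\pi_1|_X$ as a smooth degree-$b$ hypersurface in $\P^n$, invoke rational connectedness of Fano hypersurfaces when $b\leq n$, and conclude via Graber--Harris--Starr over the rationally connected base $\P^1$. The ``only if'' direction is where you genuinely diverge: the paper quotes Ilic's result that a Roth variety with $b\geq n+1$ is a Castelnuovo variety, hence has $h^0(\mathcal{O}_X(K_X))\neq 0$ by the genus formula, whereas you compute directly on the ambient scroll, obtaining $K_S=-(n+1)H+(r-n-2)F$ and, by adjunction along $\widetilde{X}\in|bH+F|$, the class $K_{\widetilde{X}}=\bigl((b-n-1)H+(r-n-1)F\bigr)|_{\widetilde{X}}$ with both coefficients non-negative (using $n\geq 2$ and all $a_i\geq 1$ to get $r-n\geq n-1\geq 1$), so that $K_{\widetilde{X}}$ is effective and $\widetilde{X}$ is not uniruled. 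Your route is more self-contained --- it replaces an external citation with a two-line canonical class computation and in fact recovers the sharper conclusion $\kappa(X)\geq 0$ with the canonical class made explicit (e.g.\ it exhibits the quartic surface as having trivial canonical class directly) --- while the paper's citation of the Castelnuovo property is what it actually needs elsewhere (cf.\ Lemma \ref{mincas} and Theorem \ref{ne+2}), so the two arguments serve slightly different purposes in context. Both are complete.
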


\begin{proof}
Let $S$ be a rational scroll with the projection $\pi_1 \colon S \rightarrow \P^1$, and let $X \subset S$ be a Roth variety. Then, there is a surjective morphism $\pi_1|_X \colon X \rightarrow \P^1$. Let $F_X$ be a general fiber of $\pi_1|_X$, i.e., it is a restriction of a general fiber $F$ of $\pi_1$ to $X$. We note that the natural map $H^0(\mathcal{O}_S(F)) \rightarrow H^0(\mathcal{O}_X(F_X))$ is an isomorphism. By the Bertini Theorem, $F_X$ is a smooth hypersurface in $F \simeq \P^n$. Since $X \in |bH + F|$, we have $\deg_{\P^n}(F_X) = (bH+F).H^{n-1}.F = b$. If $b \leq n$, then $F_X$ is rationally connected. Thus, by \cite[Corollary 1.3]{GHS}, $X$ is rationally connected. Conversely, if $b \geq n+1$, then by \cite[Theorem 3.14 (5)]{Il}, $X$ is a Castelnuovo variety (see Definition \ref{casvar}). Thus, $h^0(\mathcal{O}_X(K_X)) \neq 0$, and hence, $X$ cannot be rationally connected.
\end{proof}

\begin{remark}\label{rofi}
In the proof, we observed that every Roth variety $X \subset \P^r$ has a fibration $\pi_1|_X \colon X \rightarrow \P^1$ such that the fiber $F_X$ is a hypersurface of degree $b$ via $\mathcal{O}_X(1)$.
\end{remark}

\begin{corollary}\label{rora}
Every Roth variety $X \subset \P^r$ of degree $d \leq r+2$ is rational except when $X$ is a quartic or quintic surface.
\end{corollary}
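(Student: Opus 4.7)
The plan is to run through the classification of Roth varieties in Proposition~\ref{rocl} case by case; the two exceptions --- cases~(5) and~(7), where $X$ is a quartic or quintic surface --- are Castelnuovo surfaces with $h^0(K_X)>0$ (as in the proof of Proposition~\ref{roraco}), hence irrational, so they indeed must be excluded. Case~(1), $b=1$, is immediate: $X$ is literally the rational scroll $S_{1,a_1,\ldots,a_{n-1}}$.

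For cases~(2)--(4) with $b=2$, I would invoke the fibration $\pi_1|_X\colon X\to \P^1$ from Remark~\ref{rofi}; its generic fibre over $K=\C(t)$ is a smooth quadric hypersurface $F_X^{\eta}\subset \P^n_K$ with $n\geq 2$. Since $K$ is a $C_1$-field (Tsen's theorem), this quadric, being a degree-$2$ form in $n+1\geq 3$ variables, has a $K$-rational point, and stereographic projection from it is a $K$-birational isomorphism to $\P^{n-1}_K$. Pulling back to the total space yields a $\C$-birational isomorphism $X \dashrightarrow \P^1\times \P^{n-1}$, so $X$ is rational.

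The delicate case is~(6), with $b=3$ and $n=3$: the generic fibre of $\pi_1|_X$ is a smooth cubic surface $F_X^{\eta}\subset \P^3_K$. Although Tsen's theorem still furnishes a $K$-rational point (a cubic form in $4>3$ variables), this alone does not force $K$-rationality of a cubic surface. The plan is to exploit the rigidity of the Roth construction: the splitting $E=\mathcal{O}^2\oplus\mathcal{O}(1)^2$ of the ambient scroll gives two disjoint subbundles $\Sigma=\P(\mathcal{O}^2)$ and $\Sigma'=\P(\mathcal{O}(1)^2)$ of $S$ which restrict to a pair of skew lines in each $\P^3$-fibre of $\pi_1$; intersecting $\widetilde X\in |3H+F|$ with $\Sigma,\Sigma'$ and combining with the line $L\subset X$ produces explicit multisections of $\pi_1|_X$, from which one extracts enough $K$-rational curves on the generic cubic surface to invoke Manin's rationality criterion, giving $K$-rationality of the fibre and hence $\C$-rationality of $X$.

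The main obstacle is precisely case~(6): cubic surface fibrations are not automatically rational even over $C_1$-fields, so the structure specific to the Roth construction (the complementary summand decomposition and the singular line $L$) must be brought in to upgrade Tsen's single $K$-point to genuine $K$-rationality of the generic fibre. The cases $b=1,2$ are straightforward once the fibration of Remark~\ref{rofi} is in hand.
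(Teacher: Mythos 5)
Your reduction to Proposition~\ref{rocl} and the fibration of Remark~\ref{rofi} is exactly the paper's route, and your handling of cases (1)--(5) and (7) is correct; the Tsen/stereographic-projection argument for $b=2$ is the standard justification of what the paper merely asserts (``has a hyperquadric fibration over $\P^1$\ldots rational''). You have also correctly isolated the one genuinely delicate point: the paper itself disposes of case (6) with the bare sentence that a Roth variety with a cubic surface fibration over $\P^1$ is rational (implicitly leaning on the classification of degree-$7$ threefolds in $\P^5$, since case (6) forces $n=3$, $e=2$, $d=7$, $r=5$), whereas a cubic surface over $K=\C(t)$ with a $K$-point need not be $K$-rational.

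However, your proposed repair of case (6) does not work as described, so the proof has a genuine gap there. The curves you want to use are all trisections of $\pi_1|_X$, not sections and not $K$-lines on the generic fibre. Indeed, with $E=\mathcal{O}^2\oplus\mathcal{O}(1)^2$, the curve $\widetilde{X}\cap\Sigma$ is the preimage of $L$ under $\pi_2|_{\widetilde{X}}$, hence is irreducible of class $(3,1)$ on $\Sigma\simeq\P^1\times\P^1$: it maps isomorphically onto $L$ but with degree $b=3$ onto the base of $\pi_1$. Likewise $\widetilde{X}\cap\Sigma'$ meets each fibre line $\Sigma'\cap F$ in the three points where the cubic surface $F_X$ meets that line, and $L\subset X$ itself is a $3$-section. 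So over the generic point each of these produces only a closed point of degree $3$ on the generic cubic surface --- no better than intersecting with an arbitrary line --- and degree-$3$ points give nothing beyond the $K$-point already supplied by Tsen. The rationality criteria you allude to (Segre--Manin, or the classical construction from two skew lines) require a Galois-stable pair of disjoint lines, or more generally a Galois-stable contractible configuration among the $27$ lines, defined over $K$; nothing in your construction exhibits such a configuration, and you give no control on the monodromy of the $27$ lines of the fibration. As written, $K$-rationality of the generic fibre (and hence rationality of $X$) in case (6) remains unproved; to close it you would need either an actual pair of skew $K$-lines coming from the geometry of $\widetilde{X}\subset S$, or a direct argument on the threefold itself (e.g.\ via the classification of degree-$7$ threefolds in $\P^5$, or an explicit birational map such as projection from a suitable subvariety).
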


\begin{proof}
By Proposition \ref{rocl}, we have $b \leq 2$, $b=n=3$, or $X$ is a quartic or quintic surface.
If $b \leq 2$, then such a Roth variety is a rational scroll or has a hyperquadric fibration over $\P^1$.
If $b=n=3$, then such a Roth variety has a cubic surface fibration over $\P^1$. In these two cases, Roth varieties are rational.
\end{proof}

\section{Positivity of double point divisors and weak Fano varieties}\label{posdpdsec}

In this section, we study positivity properties of anticanonical divisors of small degree varieties after reviewing Noma's work on double point divisors (\cite{N}).

\subsection{Double point divisors from inner projection}\label{dpd}

In this subsection, we briefly summarize Noma's work (\cite{N}).
Let $X \subset \P^r$ be a non-degenerate smooth projective variety of dimension $n$, codimension $e$, and degree $d$. Throughout the section, we assume that $n \geq 2$ and $e \geq 2$. 
We consider general points $x_1, \ldots, x_{e-1}$ of $X$, and $\Lambda := \langle x_1, \ldots, x_{e-1} \rangle$ to be their linear span. Note that $\Lambda \cap X = \{x_1, \ldots, x_{e-1}\}$ by the general position lemma (see e.g., \cite[Lemma 1.1]{N}).
Consider the inner projection from the center $\Lambda$ and the blow-up
$\sigma \colon \widetilde{X} \rightarrow X $ at $x_1, \ldots, x_{e-1}$. We have the following diagram:
\[\xymatrix{
\widetilde{X} \ar^{\sigma}[r] \ar_{\widetilde{\pi}}[rd] & X \ar@{.>}[d]^-{\pi} \ar@{^{(}->}[r] & \P^{n+e} \ar@{.>}[d]^-{\pi_{\Lambda}}\\
& \overline{X}_{\Lambda} \ar@{^{(}->}[r] & \P^{n+1}.
}\]
Note that $\deg(\overline{X}_{\Lambda}) = d-e+1$.

Now, suppose that $X$ is neither a scroll over a smooth projective curve or the Veronese surface in $\P^5$.
Noma proved that if $X$ is neither a scroll over a smooth projective curve nor the Veronese surface in $\P^5$, then $\widetilde{\pi}$ is a birational morphism that does not contract any divisor (\cite[Theorem 3]{N}). Thus, we can apply the birational double point formula (\cite[Lemma 10.2.8]{posII}) so that we obtain an effective divisor $D(\widetilde{\pi})$ on $\widetilde{X}$ such that
$$
\mathcal{O}_{\widetilde{X}}(D(\widetilde{\pi})) \simeq \widetilde{\pi}^{*}(\omega_{\overline{X}}^{\circ}) \otimes {\omega_{\widetilde{X}}}^{-1}.
$$
Let $D_{inn}(\pi):=\overline{ \sigma(D(\widetilde{\pi})|_{\widetilde{X} \backslash E_1 \cup \cdots \cup E_{e-1}}}$ be a divisor on $X$, where $E_1, \ldots, E_{e-1}$ are exceptional divisors of $\sigma$. The effective divisor $D_{inn} (\pi)$, called the \emph{double point divisor from inner projection}, is then linearly equivalent to
$$
D_{inn} = -K_X + (d-r-1)H.
$$
By varying the centers of projections, Noma proved in \cite[Theorem 1]{N} that the base locus of $|D_{inn}|$ lies in the set of non-birational centers of simple inner projections, i.e.,
$$
\Bs(|D_{inn}|)\subset \mathcal{C}(X) :=\{u \in X\mid l(X \cap \langle u,x\rangle) \geq 3 \text{ for general } x \in X\}.
$$
Note that if $\Bs(|D_{inn}|)$ is finite, then $D_{inn}$ is semiample.
On the other hand, Noma showed that $\mathcal{C}(X)$ is finite (so is $\Bs(|D_{inn}|)$) unless $X$ is a Roth variety (\cite[Theorem 2]{N}).

To sum up, we have the following.

\begin{theorem}[{\cite[Theorem 4]{N}}]\label{nomasa}
Suppose that $X$ is not a scroll over a smooth projective curve, the Veronese surface in $\P^5$, or a Roth variety. Then, the double point divisor $D_{inn}$ from inner projection is semiample.
\end{theorem}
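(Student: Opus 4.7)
The plan is to assemble the three Noma theorems gathered just above the statement. By hypothesis $X$ is neither a scroll over a smooth projective curve nor the Veronese surface in $\P^5$, so \cite[Theorem 3]{N} applies: for a general choice of a simple inner projection centered at $e-1$ points of $X$, the induced morphism $\widetilde{\pi}\colon \widetilde{X}\to \overline{X}_{\Lambda}$ is birational and contracts no divisor. First, I would apply the birational double point formula \cite[Lemma 10.2.8]{posII} to $\widetilde{\pi}$ to produce an effective divisor $D(\widetilde{\pi})$ on $\widetilde{X}$, and then push its restriction to $\widetilde{X}\setminus(E_1\cup\cdots\cup E_{e-1})$ down via $\sigma$ to obtain $D_{inn}(\pi)\sim -K_X+(d-r-1)H$ on $X$ as recalled above. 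This step is essentially formal once \cite[Theorem 3]{N} is in hand.

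Next, I would let the projection centers $(x_1,\dots,x_{e-1})$ vary over general $(e-1)$-tuples of points of $X$, thereby producing many effective divisors that all lie in the single complete linear system $|D_{inn}|$. A common base point of these divisors must, by \cite[Theorem 1]{N}, be a non-birational center of some simple inner projection, so $\Bs|D_{inn}|\subset \mathcal{C}(X)$. Since $X$ is also excluded from being a Roth variety, \cite[Theorem 2]{N} supplies the crucial geometric input that $\mathcal{C}(X)$ is a finite set, and hence so is $\Bs|D_{inn}|$.

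Finally, I would upgrade finite base locus to semi-ampleness, which I expect to be the main obstacle. In general, finiteness of the base locus of a linear system is not enough for semi-ampleness, so no soft argument suffices. The idea is to exploit the explicit geometric description of $D_{inn}$: for each of the finitely many points $p\in \Bs|D_{inn}|$, one would produce, by varying the inner projection centers in a controlled way and passing to a sufficiently divisible multiple $|mD_{inn}|$, a member of $|mD_{inn}|$ avoiding $p$ (this uses that the non-birational locus is a proper, indeed $0$-dimensional, subvariety away from which many projections are simultaneously admissible). Running this over the finite set $\Bs|D_{inn}|$ and combining shows that the stable base locus of $D_{inn}$ is empty, so $D_{inn}$ is semi-ample. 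Making this final step rigorous is where all the genuine work lies, and it is precisely the content of \cite[Theorem 4]{N} that the statement cites.
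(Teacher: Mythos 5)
Your first two steps match exactly what the paper (following Noma) does: Theorem 3 of \cite{N} gives that $\widetilde{\pi}$ is birational and contracts no divisor once scrolls over curves and the Veronese surface are excluded, the birational double point formula \cite[Lemma 10.2.8]{posII} produces the effective divisor $D_{inn}\sim -K_X+(d-r-1)H$, Theorem 1 of \cite{N} gives $\Bs(|D_{inn}|)\subset \mathcal{C}(X)$, and Theorem 2 of \cite{N} gives finiteness of $\mathcal{C}(X)$ once Roth varieties are excluded. (To be fair to both sides, the paper states the result as a citation of \cite[Theorem 4]{N} and only sketches this assembly in the preceding paragraph.)

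Your final step, however, contains a genuine error. You assert that ``finiteness of the base locus of a linear system is not enough for semi-ampleness, so no soft argument suffices,'' but for a \emph{complete} linear system on a projective variety this is precisely the content of Zariski's theorem: if $\Bs(|D|)$ is a finite set of points, then $|mD|$ is base point free for $m\gg 0$ (see \cite[Remark 2.1.32]{posI}). This is exactly the soft argument the paper invokes when it says ``if $\Bs(|D_{inn}|)$ is finite, then $D_{inn}$ is semiample,'' and it is the standard way Noma concludes. Moreover, the replacement mechanism you propose cannot work as described: every effective divisor obtained from an inner projection lies in the single complete linear system $|D_{inn}|$, so every such divisor --- and every sum of $m$ of them --- passes through every point of $\Bs(|D_{inn}|)$ by definition of the base locus. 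Varying the centers therefore produces no member of $|mD_{inn}|$ avoiding a base point; one needs sections of $mD_{inn}$ that do not come from products of sections of $D_{inn}$, and manufacturing those is exactly what Zariski's theorem accomplishes (via the finite generation of the relevant graded algebra), not something that falls out of the projection geometry. Replace your last paragraph with an appeal to Zariski's theorem and the argument is complete.
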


\subsection{Positivity properties of anticanonical divisors of small degree varieties}\label{proof}
The aim of this subsection is to prove the following.

\begin{theorem}\label{main}
Let $X \subset \P^r$ be an $n$-dimensional non-degenerate smooth projective variety of degree $d \leq r+1$. Then, one of the following holds:
\begin{enumerate}[\indent$(a)$]
 \item $X$ is a weak Fano variety. $($If $d \leq r$, then $X$ is a Fano variety.$)$
 \item $X$ is a Roth variety but not a hypersurface or a rational scroll.
 \item $X$ is a Calabi-Yau hypersurface with $n \geq 2$ $(d=r+1)$
 \item $X$ is an elliptic normal scroll or an elliptic normal curve $(d=r+1)$.
\end{enumerate}
In particular, $X$ is simply connected if and only if it is from $(a)$, $(b)$, or $(c)$, and $X$ is rationally connected if and only if it is from $(a)$ or $(b)$.
\end{theorem}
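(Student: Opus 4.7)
The plan is to argue by dichotomy using Noma's Theorem \ref{nomasa}. Either $X$ is a scroll over a curve, the Veronese surface in $\P^5$, or a Roth variety, each covered by the structure theorems of Sections \ref{scr} and \ref{roth}; or $D_{inn} \sim -K_X + (d-r-1)H$ is semiample, from which (weak) Fanoness can be extracted.

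First I would dispose of the structured families. A curve of degree $d \leq r+1$ is rational (case (a)) or an elliptic normal curve (case (d)). A hypersurface has $d \leq n+2$, so it is Fano ($d \leq n+1 = r$, case (a)) or a Calabi-Yau hypersurface with $n \geq 2$ ($d = r+1$, case (c)). By Theorem \ref{minsc}, a scroll is rational (Fano, case (a)) or an elliptic scroll with $d \geq r+1$, forcing $d = r+1$ (case (d)). The Veronese surface in $\P^5$ is Fano. For Roth varieties with $d \leq r+1$, Proposition \ref{rocl} permits only $b = 1$ (a rational scroll, already handled) or $b = 2$ with $d \in \{r, r+1\}$; after discarding those that are hypersurfaces (the cubic surface) or rational scrolls, what remains lies in case (b).

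Assuming henceforth that $X$ is none of the above (so in particular $n \geq 2$ and $e \geq 2$), Theorem \ref{nomasa} gives that $D_{inn}$ is semiample, hence nef. Rewriting,
\begin{equation*}
-K_X \sim D_{inn} + (r+1-d)H,
\end{equation*}
we see that when $d \leq r$ the coefficient $r+1-d \geq 1$ presents $-K_X$ as nef plus ample, hence ample, so $X$ is Fano. When $d = r+1$ we have $-K_X \sim D_{inn}$ semiample and nef, but bigness is not automatic. Here I would appeal to Lemma \ref{secg}: the Castelnuovo bound forces $g(X,H) \leq 1$ for $d \leq r+1$, and via the sectional genus identity $2g(X,H) - 2 = (K_X + (n-1)H) \cdot H^{n-1}$ this bounds $-K_X \cdot H^{n-1}$ below by $(n-1)d$, which combined with nefness of $-K_X$ should yield $(-K_X)^n > 0$, so that $X$ is weak Fano, case (a).

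The \emph{in particular} statement then follows from standard facts: weak Fano smooth varieties are rationally connected, and hence simply connected, by the theorems of Kollar-Miyaoka-Mori and Q.~Zhang; Roth varieties are simply connected by Proposition \ref{rosim} and rationally connected here since $b = 2 \leq n$ (Proposition \ref{roraco}); Calabi-Yau hypersurfaces of dimension $\geq 2$ are simply connected by the Lefschetz Hyperplane Theorem but have $\kappa = 0$, so are not rationally connected; elliptic normal curves and scrolls have nontrivial fundamental group. The main obstacle I anticipate is the bigness step at $d = r+1$ outside the exceptional families, since Theorem \ref{nomasa} only supplies semiampleness; closing this gap via the sectional genus estimate of Lemma \ref{secg} is the technical heart of the argument.
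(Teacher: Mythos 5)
Your overall architecture --- dispose of curves, hypersurfaces, scrolls, the Veronese surface, and Roth varieties using Sections \ref{scr} and \ref{roth}, then use Noma's semiampleness of $D_{inn}$ to write $-K_X = D_{inn} + (r+1-d)H$ and conclude ampleness for $d \leq r$ --- is exactly the paper's. The problem is the bigness step at $d=r+1$, which you rightly flag as the technical heart but then close incorrectly, in three ways. First, Lemma \ref{secg} does not say $g \leq 1$: it says $g \leq n$ and $d \geq 2g+1$, and it is only stated (and only provable) under the hypothesis $e \geq n$, so it cannot be quoted for all of $d \leq r+1$. Second, even with a lower bound on $-K_X\cdot H^{n-1}$, the inference ``$-K_X$ nef and $-K_X\cdot H^{n-1}>0$ implies $(-K_X)^n>0$'' is false: a nef divisor can pair positively with an ample class and still have vanishing top self-intersection (a fiber class on a product already does this). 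The paper instead applies the bigness criterion $D^n > nE\cdot D^{n-1} \Rightarrow D-E$ big (\cite[Theorem 2.2.15]{posI}) with $D=H$ and $E=\frac{1}{n-1}\{K_X+(n-1)H\}$; this needs $E$ nef, which comes from Ionescu's base-point-freeness theorem for $K_X+(n-1)H$ (the non-free cases being Fano varieties or scrolls, already handled), together with the purely numerical inequality of Lemma \ref{ineq} derived from $g\leq n$ and $d\geq 2g+1$ --- not from $g \leq 1$.

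Third, your proposal contains no argument at all for $d=r+1$ in the range $e \leq n-1$, where Lemma \ref{secg} is unavailable. The paper treats $e\leq n-2$ by the Barth--Larsen Theorem together with Theorem \ref{ne+2} to get $-K_X=\ell H$ with $\ell>0$, and treats $e=n-1$ by Ionescu's classification \cite[Theorem I]{Io2}: it excludes hyperquadric fibrations by a Chern class computation ($4n = 2(2n-1)$ is impossible) and checks that the single surviving linear-fibration case, the Bordiga threefold, is weak Fano via $(-K_X)^3=6$. Without these cases and without a correct bigness criterion, the proof of case $(a)$ at $d=r+1$ is not established. (A minor further slip: for $d\leq r+1$ Proposition \ref{rocl} also allows the $b=3$ quartic surface, which is harmless only because it is a hypersurface and so falls under case $(c)$.)
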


We will study adjunction mappings of varieties from the case (1) to obtain a more detailed classification (see Theorem \ref{adj}).

\begin{remark}
(1) The fundamental group of an elliptic scroll or an elliptic curve is $\Z \oplus \Z$.\\[1pt]
(2) Every case of Theorem \ref{main} really occurs. The existence of cases (b) and (c) is clear. See Remark \ref{exisc} for case (d) and see Examples \ref{wfex} and \ref{lfex} for case (a). We note that there are weak Fano varieties of degree $d=r+1$ which are not Fano.\\[1pt]
(3) Every variety from (b), (c), or (d) is projectively normal 
unless it is a non-linearly normal rational scroll. (see Remark \ref{exisc}, Proposition \ref{rocl}, and Remark \ref{ropnrem}).\\[1pt]
\end{remark}

\begin{proof}[Proof of Theorem \ref{main}]
We denote by $H$ a hyperplane section.

By the classification of curves of almost minimal degree, there is nothing to prove when $X$ is a curve. Thus, assume that $n \geq 2$. When $X \subset \P^r$ is a hypersurface, $X$ is simply connected by the Barth-Larsen Theorem (\cite[Corollary 3.2.2]{posI}). In addition, by the adjunction theorem, $X$ is a Fano variety if $d \leq r$, and $X$ is a Calabi-Yau variety if $d = r+1$. Note that a smooth hypersurface $X \subset \P^r$ is rationally connected if and only if $\deg(X) \leq r$, i.e., $X$ is a Fano variety.

We further assume that $e \geq 2$. The case that $X \subset \P^r$ is a Roth variety or a scroll over a curve is already treated in Sections \ref{scr} and \ref{roth}. Thus, suppose that $X$ is neither a Roth variety nor a scroll over a curve. Note that the Veronese surface in $\P^5$ is clearly a Fano variety. By Theorem \ref{nomasa}, we only have to consider the case that the double point divisor $D_{inn}=-K_X + (d-r-1)H$ is semiample (thus, nef). If $d \leq r$, then we may write
$$
-K_X = D_{inn} + (r+1-d)H,
$$
and hence, $-K_X$ is ample, i.e., $X$ is a Fano variety (see also \cite[Corollary 7.6]{N}).

Now, consider the case $d = r+1$. Then, $D_{inn}=-K_X$ is a semiample divisor. We will show that $-K_X$ is big, and hence, $X$ is a weak Fano variety. Note that every weak Fano variety is rationally connected and simply connected (see e.g., \cite[Corollary 1.4]{HM}). If $X \subset \P^r$ is not linearly normal, then it is obtained by an isomorphic projection from $X \subset \P^{r+1}$ with degree $d = r+1$, and hence, it is a Fano variety. Thus, we may assume that $X \subset \P^r$ is linearly normal. We divide into three cases: (1) $e \leq n-2$, (2) $e = n-1$, and (3) $e \geq n$.

If $e \leq n-2$, then by the Barth-Larsen Theorem (\cite[Corollary 3.2.3]{posI}) and Theorem \ref{ne+2}, $-K_X = \ell H$ for some integer $\ell > 0$, and hence, $-K_X$ is ample.

Suppose that $e = n-1$. By the Barth-Larsen Theorem (\cite[Corollary 3.2.2]{posI}), $X$ is simply connected. Note that $d=2n=2e+2$, and $n \geq 3$ since $e \geq 2$. By the classification results by Ionescu (see tables in Introductions of \cite{Io1} and \cite{Io3}), case f) of Theorem I in \cite{Io2} does not occur in our case. Thus, by \cite[Theorem I]{Io2}, $X$ is a Fano variety, a scroll, a linear fibration over a rational surface, or a hyperquadric fibration because $X$ is simply connected and $-K_X$ is non-trivial. Recall that we already exclude scrolls. First, we show that $X$ cannot have a hyperquadric fibration. Suppose that $X$ has a hyperquadric fibration. Let $F$ be a hyperquadric fiber of $X$, and let $H_F:=H|_F$. Then, we have (cf., \cite[p.217]{IT1})
$$
4n = 2n H_F^{n-1} = c_{n-1}(N_{X|\P^{2n-1}}|_F)=c_{n-1}(N_{F|\P^{2n-1}}) = 2 (2n-1),
$$
which is a contradiction. Now, we show that if $X$ has a linear fibration over a rational surface $B$, then it is a weak Fano variety. Since $X \subset \P^r$ is linearly normal and $d=2n$, by \cite[Proposition 4]{IT1}, $B \simeq \P^2$ and $d = \frac{n(n+1)}{2}$. Thus, $n=3$. The only possible $X$ is a Bordiga threefold by the classification (see tables in Introduction of \cite{Io1}). In Example \ref{lfex}, we will give a detailed description of the Bordiga threefold. Note that $X$ is neither a scroll (over a curve) nor a Roth variety, and hence, $-K_X$ is semiample. By the Riemann-Roch Formula, we obtain $(-K_X)^3 = 6$. Thus, $X$ is a weak Fano variety. In particular, we have shown the following.

\begin{lemma}\label{bord}
Let $X \subset \P^{2n-1}$ be an $n$-dimensional non-degenerate smooth projective variety of degree $2n$. Then, it does not admit hyperquadric fibration, and if it has a linear fibration over a surface, then it is the Bordiga threefold.
\end{lemma}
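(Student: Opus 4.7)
The plan is to establish the two claims of the lemma independently, in both cases reducing to either a numerical obstruction or a classification result already available.

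For the non-existence of a hyperquadric fibration, I would argue by contradiction. Suppose $X$ admits a fibration with smooth general fiber $F$ a quadric; since $\dim X = n$, necessarily $\dim F = n-1$, the base is a curve, and $F \simeq Q^{n-1} \subset \P^n$ is embedded via $H_F := H|_F$. As $F$ is a fiber, $N_{F|X}$ is trivial of rank $1$, so the normal-bundle exact sequence
\[
0 \longrightarrow N_{F|X} \longrightarrow N_{F|\P^{2n-1}} \longrightarrow N_{X|\P^{2n-1}}|_F \longrightarrow 0
\]
forces $c_{n-1}(N_{F|\P^{2n-1}}) = c_{n-1}(N_{X|\P^{2n-1}}|_F)$ in $A^{n-1}(F)$. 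The left-hand side is read off from the factorization $F \subset \P^n \subset \P^{2n-1}$ via $c(N_{F|\P^{2n-1}}) = (1+2H_F)(1+H_F)^{n-1}$; extracting the degree-$(n-1)$ part and using $H_F^{n-1} = 2$ gives $2(2n-1)$. The right-hand side evaluates to $2n \cdot H_F^{n-1} = 4n$, as in \cite[p.217]{IT1}. The contradiction $4n = 4n-2$ rules out such a fibration.

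For the linear fibration assertion, I invoke the Ionescu--Toma structure theorem \cite[Proposition 4]{IT1}: applied to the (linearly normal) variety $X$ of degree $d = 2n$ in $\P^{2n-1}$ with a linear fibration over a surface $B$, it forces $B \simeq \P^2$ and $d = n(n+1)/2$. Combining this with $d = 2n$ yields $n = 3$. Hence $X \subset \P^5$ is a smooth threefold of degree $6$ admitting a linear fibration over $\P^2$, and consultation of Ionescu's classification tables in \cite{Io1} identifies $X$ uniquely as the Bordiga threefold.

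The main subtlety is the role of linear normality in the second half. If $X$ is not linearly normal, then it is an isomorphic projection from some $X' \subset \P^{2n}$ of minimal degree $2n$, so $X'$ is a rational normal scroll, a hyperquadric, or the Veronese surface in $\P^5$; none of these admit a linear fibration over a surface, and the Chern-class argument above already handles the hyperquadric half without any normality hypothesis. Thus the non-linearly-normal case is vacuous, and only the linearly normal situation requires the argument sketched above.
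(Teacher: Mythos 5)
Your two main arguments coincide with the paper's proof: the hyperquadric case is excluded by exactly the same Chern class identity $4n = 2nH_F^{n-1} = c_{n-1}(N_{X|\P^{2n-1}}|_F) = c_{n-1}(N_{F|\P^{2n-1}}) = 2(2n-1)$ citing \cite[p.217]{IT1}, and the linear fibration case is settled by \cite[Proposition 4]{IT1} forcing $B\simeq\P^2$ and $2n = n(n+1)/2$, hence $n=3$ and the Bordiga threefold via Ionescu's tables. So the core of your proposal is the paper's proof.

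Your added paragraph on linear normality, however, contains an error. If $X$ is not linearly normal, it is an isomorphic projection of some $X'\subset\P^{2n}$ of degree $2n$; but $X'$ has codimension $n$ in $\P^{2n}$, so its minimal possible degree is $n+1$, not $2n$, and the classification of varieties of minimal degree (rational normal scrolls, hyperquadrics, the Veronese surface) does not apply. As written, that case is not disposed of. This does not affect the lemma as the paper uses it: in the proof of Theorem \ref{main} linear normality has already been arranged before the lemma is invoked (and for the hyperquadric half your Chern class argument needs no normality hypothesis anyway), but if you want to treat the non-linearly-normal case you need a different argument, e.g.\ that $X'\subset\P^{2n}$ has $d=2n\le r'$ and hence falls under the $d\le r$ part of Theorem \ref{main}, or simply restrict the lemma to linearly normal $X$.
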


Now, we consider the last case $e \geq n$ ($d=r+1=n+e+1$) and $X$ is not a scroll. Let $C$ be a curve section ($C \subset \P^{e+1}$), and let $g$ be the sectional genus, i.e., the genus of $C$. Denote $H_C := H|_C$. The following bound of the sectional genus plays a crucial role (cf. Lemma 4 of \cite{Io4}).

\begin{lemma}\label{secg}
If $e \geq n$ and $r+1 \geq d$, then $n \geq g$ and $d \geq 2g+1$.
\end{lemma}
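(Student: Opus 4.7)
The plan is to reduce the statement to a bound on the genus of a curve in projective space and then invoke Castelnuovo's classical genus bound. A general linear section of $X$ by a codimension $n-1$ subspace produces, by Bertini, a smooth non-degenerate curve $C \subset \P^{e+1}$ of the same degree $d$, whose genus equals the sectional genus $g$. So it suffices to establish both inequalities for any smooth non-degenerate curve in $\P^{e+1}$ of degree $d \leq n+e+1$ with $e \geq n$.

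The key input is Castelnuovo's bound: writing $d - 1 = m e + \epsilon$ with integers $m \geq 0$ and $0 \leq \epsilon \leq e - 1$, one has $g \leq \binom{m}{2} e + m\epsilon$. Non-degeneracy of $C$ in $\P^{e+1}$ forces $d \geq e + 1$ and hence $m \geq 1$, while the hypothesis $d - 1 \leq n + e \leq 2e$ gives $m \leq 2$. The boundary value $m = 2$ can only occur when $\epsilon = 0$, $d = 2e+1$, and (using $e \geq n$ once more) $e = n$.

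The remainder is a short case check. If $m = 1$ then Castelnuovo reads $g \leq \epsilon = d - 1 - e \leq n$, and $2g + 1 \leq 2d - 2e - 1 \leq d$ (the last inequality holding since $d \leq 2e$ in this range). If $m = 2$ then $g \leq e = n$ and $2g + 1 \leq 2e + 1 = d$. Either way both claimed inequalities follow. I do not foresee any real obstacle; the only care required is to recognize that the extremal value $m = 2$ forces the boundary configuration $d = 2e+1$, $e = n$ and must be recorded separately from the generic case $m = 1$.
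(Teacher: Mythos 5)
Your proof is correct, but it takes a genuinely different route from the paper's. You reduce to the curve section and invoke Castelnuovo's genus bound $g \leq \binom{m}{2}e + m\epsilon$ with $m = \lfloor (d-1)/e \rfloor$, observing that non-degeneracy forces $m \geq 1$ while $d-1 \leq n+e \leq 2e$ forces $m \leq 2$, and then checking the two cases; all the steps are sound, including the identification of the boundary case $m=2 \Rightarrow d=2e+1,\ e=n$. The paper instead argues directly on the hyperplane divisor $H_C$ of the curve section: Clifford's inequality rules out $H_C$ being special (it would give $e \leq n-1$), and then Riemann--Roch applied to $e+2 \leq h^0(\mathcal{O}_C(H_C)) = d+1-g$ yields $g \leq n$ and the slightly stronger inequality $d \geq g+e+1$, from which $d \geq 2g+1$ follows via $g \leq n \leq e$. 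The paper's remark immediately after the lemma acknowledges that Castelnuovo's bound recovers $n \geq g$, so your route is anticipated there, though you go further and extract $d \geq 2g+1$ from it as well. The trade-off: the paper's argument is more elementary (only Clifford and Riemann--Roch) and gives the non-speciality of $H_C$ plus the sharper bound $d \geq g+e+1$ as by-products; yours leans on the heavier Castelnuovo machinery but is a clean uniform case check and naturally isolates the extremal configuration $d = 2e+1$, $e=n$, which is the same circle of ideas the authors use later (Subsection 5.3 and Section 7) when Castelnuovo curves and varieties reappear.
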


\begin{proof}
If $H_C$ is special, then by the Clifford Inequality, we have
$$
e+2 \leq h^0(\mathcal{O}_C(H_C)) \leq \frac{d}{2} +1 \leq \frac{n+e+1}{2} + 1.
$$
Then, we get $2e+4 \leq n+e+1+2$, and hence, $e \leq n-1$, which is a contradiction. Thus, $H_C$ is non-special. By the Riemann-Roch Formula, we have
$$
e+2 \leq h^0(\mathcal{O}_C(H_C)) = d+1-g \leq (n+e+1)+1-g.
$$
Then, we get $g \leq n$ and $g+e+1 \leq d$. Since $g \leq n \leq e$, we obtain $2g+1 \leq g+e+1 \leq d$.
\end{proof}

\begin{remark}
We obtain the same inequality $n \geq g$ by applying the Castelnuovo's Bound on sectional genus.
\end{remark}

If $g=0$, then by Fujita's classification (see e.g., \cite[Theorem A]{Io4}), $X$ is a Fano variety or a rational scroll. Thus, we may assume that $g \geq 1$. Then, we can prove the following.

\begin{lemma}\label{ineq}
If $n \geq g \geq 1$ and $d \geq 2g+1$, then $d + \frac{n}{n-1}\{(n-1)d - 2g +2\} - nd >0$.
\end{lemma}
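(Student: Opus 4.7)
The plan is to prove this by a direct algebraic simplification followed by a short estimate. The key observation is that the left-hand side telescopes considerably once the term $\frac{n}{n-1}\{(n-1)d - 2g+2\}$ is expanded.

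First I would note that the $\frac{n}{n-1}(n-1)d = nd$ piece cancels with the $-nd$ at the end, reducing the quantity to
\[
d + \frac{n(2-2g)}{n-1} \;=\; d - \frac{2n(g-1)}{n-1}.
\]
So the inequality is equivalent to $(n-1)d > 2n(g-1)$. If $g=1$ this is immediate, since the right-hand side is $0$ and $d>0$. So the remaining case is $g \geq 2$, where we also have $n \geq g \geq 2$, so $n-1 \geq 1$.

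Now I would substitute the hypothesis $d \geq 2g+1$, and it suffices to prove the stronger inequality $(2g+1)(n-1) > 2n(g-1)$. Expanding both sides gives
\[
(2g+1)(n-1) - 2n(g-1) \;=\; (2gn - 2g + n - 1) - (2gn - 2n) \;=\; 3n - 2g - 1.
\]
Using $n \geq g \geq 2$ we get $3n - 2g - 1 \geq 3g - 2g - 1 = g - 1 \geq 1 > 0$, and the inequality is strict as required.

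The argument is entirely elementary; the only mildly subtle point is that we actually need the strict inequality, which is where the hypotheses $g \geq 1$ and $n \geq g$ combine to force $3n - 2g - 1 \geq 0$ with equality ruled out by $n \geq 2$ in every subcase. There is no real obstacle here, and the bookkeeping above is all that is needed.
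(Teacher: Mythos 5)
Your proof is correct and rests on the same elementary fact as the paper's, namely $(2g+1)(n-1) - 2n(g-1) = 3n-2g-1 > 0$ under $n \geq g \geq 1$ (with $n\geq 2$ implicit); the paper reaches it via a slightly more roundabout manipulation involving an auxiliary $\epsilon$ with $\frac{2g-2}{2g+1}n+\epsilon=n-1$, whereas you cancel the $nd$ terms first and reduce directly to $(n-1)d>2n(g-1)$. Your version is a cleaner presentation of essentially the same argument.
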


\begin{proof}
Since $3n > 2g+1$, we have
$$
(2g+1)(n-1)=2gn+n-2g-1 > 2gn-2n=(2g-2)n.
$$
Thus, $n-1 > \frac{2g-2}{2g+1}n$, and hence, there exists a rational number $\epsilon >0$ such that
$$
\frac{2g-2}{2g+1}n+\epsilon = n-1.
$$
Note that
$$
\frac{n-1}{n} + \frac{(n-1)d-2g+2}{d} = \frac{2g-2}{2g+1} + \frac{\epsilon}{n} + (n-1)  + \frac{-2g+2}{d}.
$$
Since $d \geq 2g+1$, we get $\frac{2g-2}{2g+1} \geq \frac{2g-2}{d}$. Then, we have
$$
\frac{n-1}{n} + \frac{(n-1)d-2g+2}{d} = (n-1) + \frac{2g-2}{2g+1} + \frac{-2g+2}{d}  + \frac{\epsilon}{n} > n-1.
$$
By multiplying $\frac{nd}{n-1}$, we obtain $d + \frac{n}{n-1}\{(n-1)d - 2g +2\} > nd$.
\end{proof}

Let $D:=H$ and $E:=\frac{1}{n-1}K_X + H = \frac{1}{n-1}\{K_X + (n-1)H\}.$ By \cite[Theorem 1.4]{Io1}, if $K_X + (n-1)H$ is not base point free, then $X$ is a Fano variety or a scroll. Thus, we may assume that $K_X + (n-1)H$ is base point free (thus, nef), and hence, $E$ is a nef $\Q$-divisor. Note that
$$
2g-2 = \{K_X + (n-1)H\}.H^{n-1} = K_X.H^{n-1} + (n-1)d,
$$
and hence, we get
$$K_X.H^{n-1}=-(n-1)d+2g-2.$$
Now, we have
$$
(D-nE).D^{n-1} = \left( H- \frac{n}{n-1}K_X - nH \right).H^{n-1}=d + \frac{n}{n-1}\{(n-1)d-2g+2\} - nd.
$$
By Lemma \ref{ineq}, we obtain $D^n > nE.D^{n-1}$. Thus, the divisor
$$D-E=H- \left( \frac{1}{n-1}K_X + H \right) = -\frac{1}{n-1}K_X$$
is big by \cite[Theorem 2.2.15]{posI}. We complete the proof of Theorem \ref{main}.
\end{proof}

\section{Cox rings}\label{coxs}

In this section, we study the finite generation of Cox rings of small degree varieties. The purpose is to prove the following theorem.

\begin{theorem}\label{cox}
Let $X \subset \P^r$ be a non-degenerate smooth projective variety of degree $d \leq r+1$.
Then, the Cox ring of $X$ is not finitely generated if and only if $d=r+1$ and $X$ is either a smooth quartic surface with infinite automorphism group or an elliptic scroll.
\end{theorem}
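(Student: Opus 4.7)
The plan is to use Theorem \ref{main} to reduce to the four classes (a)--(d) and then analyze the Cox ring case by case.

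For the weak Fano case (a), I would invoke the Birkar-Cascini-Hacon-McKernan theorem: every smooth weak Fano variety is a Mori dream space, so its Cox ring is finitely generated. For the Roth variety case (b), the degree restriction $d\le r+1$ combined with Proposition \ref{rocl} forces the parameter $b$ to satisfy $b\le 2$. If $b=1$ then $X$ is a rational (toric) scroll with polynomial Cox ring. If $b=2$ then $X$ is a smooth quadric bundle over $\P^1$ of Picard number two, sitting as a divisor in the toric scroll $S_{0,0,a_1,\dots,a_{n-1}}$; a direct analysis of its Mori cone and small $\Q$-factorial modifications, made tractable by the low Picard rank, yields finite generation.

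For case (c), when $n\ge 3$ the Lefschetz hyperplane theorem on Picard groups gives $\Pic(X)\simeq\Z$ generated by $\mathcal{O}_X(1)$, so the Cox ring coincides with the finitely generated section ring of $\mathcal{O}_X(1)$. When $n=2$, $X$ is a smooth quartic surface, that is, a quartic K3 surface; here I would appeal to the theorem of Artebani-Hausen-Laface which asserts that a projective K3 surface is a Mori dream space if and only if its automorphism group is finite. This produces exactly the first exceptional family. For case (d), an elliptic normal scroll $X=\P_E(V)\to E$ has Picard group $\Z\xi\oplus\pi^*\Pic(E)$; the uncountable family $\Pic^1(E)$ pulls back to give infinitely many distinct effective divisor classes on $X$ supporting nontrivial sections, and any finite set of Cox-ring generators would only produce a countable subsemigroup of classes, contradicting finite generation. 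This yields the second exceptional family.

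The main obstacle is the Roth variety subcase of (b): a direct computation of $-K_X$ via adjunction in the ambient scroll shows these varieties are typically not weak Fano, so BCHM is unavailable and one must analyze the minimal model program by hand inside the toric ambient scroll, exploiting the fact that the whole geometry is governed by the two generators $H$ and $F$. A secondary technical point is to clarify the Cox-ring convention for the elliptic normal curve subcase of (d), which the statement tacitly excludes; any reasonable extension to non-regular curves would make the elliptic-scroll argument apply verbatim.
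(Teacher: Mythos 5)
Your overall decomposition matches the paper's: reduce via Theorem \ref{main} to the four classes, handle weak Fanos by \cite[Corollary 1.3.2]{BCHM}, hypersurfaces by the Lefschetz theorem for Picard groups plus the Artebani--Hausen--Laface criterion for quartic K3's, and elliptic scrolls by non-finite-generation of the grading group (your $\Pic^0(E)$ argument is in fact more explicit than the paper, which handles this case only implicitly via regularity; your remark that the elliptic normal curve is tacitly excluded by the regularity convention is also a fair reading). The genuine gap is exactly where you flag "the main obstacle": the $b=2$ Roth varieties. There you write that "a direct analysis of its Mori cone and small $\Q$-factorial modifications, made tractable by the low Picard rank, yields finite generation," but this is an assertion, not an argument, and the implicit premise that Picard number two makes finite generation automatic is false --- K3 surfaces of Picard number two with infinite automorphism group are not Mori dream spaces, so some positivity input specific to Roth varieties is indispensable.

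The paper supplies that input with two separate arguments, neither of which is a routine cone computation. For $n\ge 3$ (Proposition \ref{rocox}) it verifies Ito's criterion \cite[Theorem 1.3]{It} for Picard number two by exhibiting effective divisors with empty total intersection on each side of the nef cone: the divisors $D_i=L_i|_X\in|H_X-a_iF_X|$ cutting out the distinguished line $L\subset X$, a member $D_n$ of the base-point-free double point divisor class $D_{out}=-K_X+(d-n-2)H_X$ chosen to miss $L$ (using $D_{out}.L=0$ from \cite[Proposition 3.8]{Il}), and two disjoint fibers of the fibration over $\P^1$. For $n=2$ (Proposition \ref{rosurcox}) it instead shows $-K_X=L+F_X$ is big via an explicit Zariski decomposition and invokes Testa--V\'arilly-Alvarado--Velasco \cite[Theorem 1]{TVAV} for rational surfaces with big anticanonical divisor. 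You would need to supply one of these mechanisms (or an equivalent); as written, the $b=2$ case is not proved.
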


In particular, every non-degenerate smooth projective variety $X \subset \P^r$ of degree $d \leq r$ has a finitely generated Cox ring.

Before giving the proof, we recall the definition of Cox ring. Let $X$ be a regular smooth projective variety (i.e., $h^1(\mathcal{O}_X)=0$). Then, $\Pic(X)$ is finitely generated so that we can choose generators $L_1, \ldots, L_m$ of $\Pic(X)$. The \emph{Cox ring} of $X$ with respect to $L_1, \ldots, L_m$ is defined by
$$
\Cox(X) := \bigoplus_{(a_1, \ldots, a_m) \in \Z^m} H^0( L_1^{\otimes a_1} \otimes \cdots \otimes L_m^{\otimes a_m}).
$$
Note that the finite generation of $\Cox(X)$ is independent of the choice of generators of $\Pic(X)$ (see \cite[Remark in p.341]{HK}), and $\Cox(X)$ is finitely generated if and only if $X$ is a Mori dream space (\cite[Proposition 2.9]{HK}). Typical examples of Mori dream spaces are toric varieties and regular varieties with Picard number one. For further detail on Cox rings and Mori dream spaces, we refer to \cite{HK}.

\begin{example}\label{hypr}
Let $X \subset \P^r$ be a smooth hypersurface. By the Lefschetz Theorem for Picard Group (\cite[Example 3.1.25]{posI}), if $r \geq 4$, then $X$ has finitely generated Cox ring. If $r=2$, then $X$ has finitely generated Cox ring if and only if $X$ is a rational curve. When $r=3$, determining finite generation of Cox ring is a delicate problem even in the case of quartic surfaces. Although very general smooth quartic surfaces have finitely generated Cox rings, there is a smooth quartic surface whose automorphism group is infinite (see e.g., \cite[Theorem 1.2]{Og}) so that its Cox ring is not finitely generated by \cite[Theorems 2.7 and 2.11]{AHL}.
\end{example}

\begin{proposition}\label{rocox}
Let $X \subset \P^r$ be a Roth variety of dimension $n \geq 3$. Then, the Cox ring of $X$ is finitely generated.
\end{proposition}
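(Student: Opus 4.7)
The plan is to verify the Hu--Keel criterion (\cite[Proposition 2.9]{HK}) that $\widetilde{X}$ is a Mori dream space. When $b = 1$, $\widetilde{X}$ is a rational scroll by \cite[Theorem 3.14(1)]{Il}, hence toric, so I may assume $b \geq 2$.

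First, since $\widetilde{X}$ is a smooth very ample divisor (note $bH + F$ is very ample as recalled before Proposition \ref{rosim}) in the smooth projective toric variety $S' := S_{0,0,a_1,\ldots,a_{n-1}}$ of dimension $n + 1 \geq 4$, the Lefschetz hyperplane theorem for Picard groups (\cite[Example 3.1.25]{posI}) yields $\Pic(\widetilde{X}) \cong \Pic(S') = \Z H \oplus \Z F$, so $\widetilde{X}$ is $\Q$-factorial of Picard rank $2$.

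Next I would identify the two extremal rays of $\overline{NE}(\widetilde{X})$. A general curve $\ell$ in a fiber of $\pi_1|_{\widetilde{X}} \colon \widetilde{X} \to \P^1$ satisfies $H \cdot \ell > 0$ and $F \cdot \ell = 0$, giving one extremal ray. For the other ray, take $L = \widetilde{X} \cap T$, where $T = \P_{\P^1}(\mathcal{O}^2) \subset S'$ is the sub-scroll contracted by $\pi_2$: on $T \cong \P^1 \times \P^1$ one has $H|_T = \mathcal{O}(0,1)$, $F|_T = \mathcal{O}(1,0)$, and $\widetilde{X} \cap T$ is a $(1,b)$-curve, so $H \cdot L = 1$ and $F \cdot L = b$. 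Hence $\overline{NE}(\widetilde{X}) = \mathrm{cone}(\ell, L)$ and $\mathrm{Nef}(\widetilde{X}) = \mathrm{cone}(F, \, bH - F)$ is rational polyhedral.

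To complete the Hu--Keel criterion, it remains to show both boundary rays of the nef cone are semiample. For $F$ this is immediate from the fibration $\pi_1|_{\widetilde{X}}$. For $bH - F$, the intersection number $(bH - F)^n \cdot \widetilde{X} = b^n(d - n)$ computed on $S'$ (using $H^{n+1} = r - n$ and $H^n F = 1$) is positive once $b \geq 2$, so $bH - F$ is big and nef; I would then exhibit a morphism $\widetilde{X} \to Y$ contracting $L$ via sections of $|m(bH - F)|$ on $\widetilde{X}$, combining those inherited from $S'$ with $H^1$-contributions in the sequence
\[
0 \to \mathcal{O}_{S'}(m(bH - F) - \widetilde{X}) \to \mathcal{O}_{S'}(m(bH - F)) \to \mathcal{O}_{\widetilde{X}}(m(bH - F)) \to 0
\]
that clear the base locus of $|m(bH - F)|$ along $T \subset S'$. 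The main obstacle is precisely this semiampleness of the second nef ray; once established, the Hu--Keel criterion gives the finite generation of $\Cox(\widetilde{X})$.
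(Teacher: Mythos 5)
Your setup --- the reduction to $b\geq 2$, the Lefschetz computation $\Pic(\widetilde X)\cong\Z H\oplus\Z F$, and the candidate nef cone $\Cone(F,\,bH-F)$ --- matches the geometry the paper exploits, but there is a genuine gap exactly where you flag it: the nefness, let alone semiampleness, of $bH-F$ is never established, and the rest of your cone analysis depends on it (without a nef class vanishing on $L$ you do not yet know that $L$ spans an extremal ray of $\overline{NE}(\widetilde X)$, so even the identification $\mathrm{Nef}(\widetilde X)=\Cone(F,bH-F)$ is contingent on the unproved claim). The paper gets this input for free: $(r-n-1)(bH-F)|_X$ is precisely the double point divisor from outer projection $D_{out}=-K_X+(d-n-2)H_X$, which is base point free and satisfies $D_{out}\cdot L=0$ by \cite[Proposition 3.8]{Il}. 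Your proposed substitute --- clearing the base locus of $|m(bH-F)|$ along $T$ via $H^1$-contributions in the restriction sequence from $S'$ --- is only a plan: since the summand $\mathcal{O}^{2}$ of $E$ contributes nothing to $H^0(\P^1,S^{mb}E(-m))$, every section restricted from $S'$ vanishes along $T$, so the entire burden falls on a surjectivity statement onto $H^0(\mathcal{O}_L)\cong\C$ that you do not prove.

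There is also a structural problem with the strategy itself. ``Nef cone generated by finitely many semiample divisors'' is necessary but not sufficient for the Mori dream property: the Hu--Keel definition behind \cite[Proposition 2.9]{HK} additionally requires finitely many small $\Q$-factorial modifications whose nef cones cover the movable cone. This is not vacuous here, because the morphism defined by $bH-F$ contracts only the single curve $L$ inside the $n$-dimensional variety ($n\geq 3$), i.e., it is a small contraction, and there are effective classes such as $H-a_iF$ (with $(H-a_iF)\cdot L=1-a_ib<0$) lying strictly beyond that nef boundary; their section rings are part of $\Cox(\widetilde X)$ but are not controlled by the nef cone of $\widetilde X$ alone. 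You would still have to produce the corresponding SQM and analyze its nef cone. The paper sidesteps both difficulties by invoking Ito's criterion \cite[Theorem 1.3]{It} for Picard number two: it exhibits $D_i\in|H_X-a_iF_X|$ for $1\leq i\leq n-1$ together with $D_n\in|D_{out}|$ satisfying $D_1\cap\cdots\cap D_n=\emptyset$ (possible because $D_{out}$ is base point free and $D_{out}\cdot L=0$), two disjoint fibers $D_1',D_2'$, and $\Cone(D_1,\ldots,D_n)\cap\Cone(D_1',D_2')=\{0\}$. I suggest you either import the base-point-freeness of $D_{out}$ and switch to that criterion, or be prepared to carry out the flip analysis in full.
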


\begin{proof}
If $b=1$, then $X$ is a rational scroll, which is a toric variety. The Cox ring of a toric variety is a polynomial ring (see e.g., \cite[Corollary 2.10]{HK}). Thus, we only have to consider the case that $b \geq 2$. Recall that $X$ is a divisor of a rational scroll $S = S_{0,0,a_1, \ldots, a_{n-1}}$ with all $a_i \geq 1$, and let $\pi_1 \colon S \rightarrow \P^1$ be the natural projection with a general fiber $F$. Recall that we have a birational morphism $\pi_2 \colon S \rightarrow \overline{S} \subset \P^r$ given by the complete linear system $|H|$, where $\mathcal{O}_{S_{0,0,a_1, \ldots, a_{n-1}}}(H)= \mathcal{O}_{\P_{\P^1}(E)}(1)$. The singular locus of $\overline{S}$ is a line $L \subset \P^r$, which is contained in $X$. There are effective divisors $L_1 \in |H - a_1 F|, \ldots, L_{n-1} \in |H - a_{n-1} F|$ (we can arrange them to be $(\C^{*})^{n+1}$-invariant divisors by the maximal torus action on $S$) such that
$$
\pi_2^{-1}(L) = L_1 \cap \cdots \cap L_{n-1} \simeq L \times \P^1.
$$
Note that $L_1 \cap \cdots \cap L_{n-1} \cap X = L$.

By the Lefschetz Theorem for Picard Group, the map $\Pic(S) \rightarrow \Pic(X)$ is an isomorphism. In particular, the Picard number of $X$ is two. For all $i$, we have an isomorphism
$$
H^0(\mathcal{O}_S(H-a_i F)) \rightarrow H^0(\mathcal{O}_X(H_X - a_i F_X)),
$$
where $F_X = F|_X$ is the restriction, from the exact sequence
$$0 \rightarrow \mathcal{O}_S(-bH - F) \rightarrow \mathcal{O}_S \rightarrow \mathcal{O}_X \rightarrow 0.$$
Let $D_1 := L_1|_X, \ldots, D_{n-1} := L_{n-1}|_X$. We denote by $D_{out}:=-K_X + (d-n-2)H_X$ the double point divisor from outer projection, where $H_X = H|_X$ is the restriction. By \cite[Proposition 3.8]{Il}, $D_{out}.L=0$. Since $D_{out}$ is base point free, we can choose an effective divisor $D_n \in |D_{out}|$ such that $D_n \cap L = \emptyset$. Thus, we obtain
$$
D_1 \cap \cdots \cap D_{n-1} \cap D_n = (L_1 \cap \cdots \cap L_{n-1} \cap X) \cap D_n = L \cap D_n = \emptyset.
$$

On the other hand, let $D_1':=F_1|_X, D_2' :=F_2|_X$, where $F_1$ and $F_2$ are distinct fibers of $\pi_1$. Then, we have
$D_1' \cap D_2' = \emptyset$. Divisor classes of $D_1, \ldots, D_{n-1}$ are in outside of the nef cone of $X$, and the divisor class of $D_n$ is a ray generator of the nef cone of $X$. The other ray generator of the nef cone of $X$ is the divisor class of $F_X$ which is also a ray generator of the effective cone of $X$. Thus, we have
$$
\Cone(D_1, \cdots, D_n) \cap \Cone(D_1', D_2') = \{ 0 \}
$$
By \cite[Theorem 1.3]{It}, $\Cox(X)$ is finitely generated.
\end{proof}

\begin{proposition}\label{rosurcox}
Let $X \subset \P^r$ be a Roth surface of degree $d=b(r-2)+1$. If $b \leq 2$, then the Cox ring of $X$ is finitely generated. In particular, the same conclusion holds when $d \leq r+1$.
\end{proposition}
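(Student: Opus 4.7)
The plan is to split according to the value of $b$. For $b = 1$, Proposition \ref{rocl}(1) (combined with \cite[Theorem 3.14(1)]{Il}) identifies $X$ with the rational scroll $S_{1, a_1}$, i.e., a Hirzebruch surface; this is a smooth projective toric surface, so its Cox ring is a polynomial ring, hence finitely generated.

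For $b = 2$, the plan is to mimic the argument of Proposition \ref{rocox}. By Remark \ref{rofi}, $\pi := \pi_1|_X \colon X \to \P^1$ is a conic bundle with smooth plane-conic general fiber, so $X$ is a smooth rational surface. The three sources of divisors used in Proposition \ref{rocox} remain available on $X$: the restriction $D_1 := L_1|_X$ of a divisor $L_1 \in |H - a_1 F|$ on $S = S_{0,0,a_1}$, supported on the line $L \subset X$; a general member $D_2 \in |D_{out}|$ of the base point free double point divisor $D_{out} = -K_X + (d-4)H_X$, which is disjoint from $L$ since $D_{out} \cdot L = 0$ by \cite[Proposition 3.8]{Il}; and two distinct fibers $D_1', D_2'$ of $\pi$. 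These give $D_1 \cap D_2 = \emptyset$ and $D_1' \cap D_2' = \emptyset$, and it remains to verify $\Cone(D_1, D_2) \cap \Cone(D_1', D_2') = \{0\}$ in $N^1(X)_{\R}$ in order to apply \cite[Theorem 1.3]{It}.

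The main obstacle is precisely this last cone-disjointness check. In Proposition \ref{rocox}, the Lefschetz Theorem for Picard Group forces $\rho(X) = 2$ and the nef cone is identified directly; but for a Roth surface with $b = 2$ one has $\rho(X) = 2 + (3 + 2a_1)$, with the extra classes coming from the singular conic fibers, so such a direct identification is unavailable. What saves the argument is that the four classes $[D_1], [D_2], [D_1'], [D_2']$ all lie in the rank-two sublattice of $N^1(X)_{\R}$ spanned by $[H_X]$ and $[F_X]$; an elementary coefficient comparison using $d - 3 = 2a_1 - 2 \geq 0$ then yields the desired disjointness, and \cite[Theorem 1.3]{It} concludes finite generation. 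The ``In particular'' statement is immediate, since by Proposition \ref{rocl} every Roth surface of degree $d \leq r + 1$ has $b \leq 2$.
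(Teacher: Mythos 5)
Your handling of the case $b=1$ and of the final ``in particular'' assertion is fine, and you correctly identify the obstruction to transplanting the argument of Proposition \ref{rocox} to surfaces: a Roth surface with $b=2$ is a conic bundle over $\P^1$ with reducible fibers, so $\rho(X)=2+(3+2a_1)>2$. But the way you propose to get around this does not work. The criterion of \cite[Theorem 1.3]{It} is a statement about $\Q$-factorial normal projective varieties \emph{of Picard number two}; the hypothesis $\rho(X)=2$ is not a convenience but the substance of the theorem (it is what makes the effective cone two-dimensional, so that two disjointness conditions plus cone-disjointness pin down the whole Mori chamber structure). Exhibiting four divisor classes that happen to lie in the rank-two sublattice spanned by $[H_X]$ and $[F_X]$ and checking cone-disjointness there says nothing about $\Cox(X)$: finite generation of the Cox ring of a rational surface is governed by the full Picard lattice (for instance by whether the set of $(-1)$-curves is finite), and no criterion that only sees a rank-two sublattice of $N^1(X)_{\R}$ can detect this. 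So the key step of your $b=2$ argument is missing, not merely technical.

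The paper takes a different and genuinely surface-specific route for $b=2$: by Proposition \ref{roraco} the surface is rational, and one shows directly that $-K_X$ is big by writing $-K_X=L+F_X$ with $L=H_X-(r-2)F_X$ and computing its Zariski decomposition
$$-K_X=\Bigl(\tfrac{2}{2r-5}L+F_X\Bigr)+\tfrac{2r-7}{2r-5}L,$$
whose positive part has positive self-intersection $\tfrac{4}{2r-5}$. Finite generation then follows from the theorem of Testa--V\'arilly-Alvarado--Velasco (\cite[Theorem 1]{TVAV}) on rational surfaces with big anticanonical divisor, which is exactly the tool adapted to arbitrary Picard number. If you want to salvage your plan, you should replace the appeal to \cite[Theorem 1.3]{It} by an argument of this kind.
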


\begin{proof}
We only have to consider the case $b=2$. By Proposition \ref{roraco}, $X$ is a rational surface. We claim that $-K_X$ is big. Indeed, we have $L=H_X - (r-2)F_X$ and $-K_X = L + F_X$. It is easy to check that $-K_X = \left( \frac{2}{2r-5}L + F_X\right) + \frac{2r-7}{2r-5}L$ is the Zariski decomposition. Since $\left( \frac{2}{2r-5}L + F_X\right)^2 = \frac{4}{2r-5}>0$, it follows that $-K_X$ is big. Then, by \cite[Theorem 1]{TVAV}, the Cox ring of $X$ is finitely generated. The final assertion is trivial since $d \leq r+1$ implies $b \leq 2$.
\end{proof}

\begin{remark}
We do not know whether Cox rings of Roth surfaces are finitely generated in general.
\end{remark}

Now, we come to the proof of Theorem \ref{cox}.

\begin{proof}[Proof of Theorem \ref{cox}]
Recall that the Cox rings of weak Fano varieties and Roth varieties of degree $d \leq r+1$ are finitely generated by \cite[Corollary 1.3.2]{BCHM}, Propositions \ref{rocox} and \ref{rosurcox}. The assertion immediately follows from Theorem \ref{main} and Example \ref{hypr}.
\end{proof}

\section{Adjunction mappings}\label{adjs}

In this section, we study adjunction mappings of weak Fano varieties of small degree. The end product of the minimal model program (see \cite{KM} for basics and \cite{BCHM} for recent progress) should be either a Mori fiber space or a minimal model.
It is natural to study a contraction appeared in the minimal model program for varieties in Theorem \ref{main}.
Note that smooth elliptic curves and Calabi-Yau hypersurfaces are minimal models. We know that scrolls and Roth varieties have natural Mori fibrations. Similarly, the adjunction mapping (if it is defined) also gives a $K_X$-negative contraction of weak Fano varieties. It turns out that the base space of the adjunction mapping are very simple in our case (see Theorem \ref{adj}).

\subsection{Basics of adjunction mappings}

We recall basic notions in adjunction theory (for further detail, see \cite{BS}). Throughout the section, we denote by $H$ a general hyperplane section of $X \subset \P^r$. By \cite[Theorem 1.4]{Io1}, if $K_X + (n-1)H$ is not base point free, then $X$ is a prime Fano variety or a scroll (over a curve) when $n \geq 3$. In the case $n=2$, we should add a Veronese surface $v_2(\P^2)$ in $\P^5$ or $\P^4$ and a quadric hypersurface $Q^2 \subset \P^3$. Now, assume that $K_X + (n-1)H$ is base point free. Then, we can define a surjective morphism $\varphi \colon X \rightarrow B$ given by $|K_X + (n-1)H|$, which is called an \emph{adjunction mapping}.

\begin{proposition}[{\cite[Proposition 1.11]{Io1}}]\label{baadjth}
Let $X \subset \P^r$ be a smooth projective variety of dimension $n$. Assume that $K_X + (n-1)H$ is base point free so that we have the adjunction mapping $\varphi \colon X \rightarrow B$. Then, one of the following holds:
\begin{enumerate}[\indent$(1)$]
 \item $\dim B = 0$, or equivalently, $-K_X = (n-1)H$.
 \item $n \geq 2$ and $\varphi$ gives a hyperquadric fibration over a smooth curve $B$.
 \item $n \geq 3$ and $\varphi$ gives a linear fibration over a smooth surface $B$.
 \item $\dim B = n$.
\end{enumerate}
\end{proposition}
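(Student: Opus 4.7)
My plan is to analyze the adjunction morphism $\varphi \colon X \to B$ case by case on $\dim B$. After replacing $\varphi$ by its Stein factorization I may assume $\varphi_*\mathcal{O}_X = \mathcal{O}_B$, so $B$ is normal and the fibers are connected. Two extremes are immediate: $\dim B = 0$ is equivalent to $L := K_X + (n-1)H \sim 0$, which yields (1), and $\dim B = n$ is precisely case (4). The real content is therefore to rule out $2 < \dim B < n$ and to identify the fibers when $\dim B \in \{1,2\}$.

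The key step is a computation of the canonical class of a general fiber. Let $F$ be a general fiber of $\varphi$ over a smooth point of $B$; by generic smoothness $F$ is smooth, and since the scheme-theoretic fiber over a smooth point of a smooth base has trivial normal bundle in $X$, adjunction gives $K_F = K_X|_F$. Restricting $L$ to $F$ then yields $-K_F = (n-1)H|_F$, with $H|_F$ ample. The Kobayashi-Ochiai theorem now governs the situation: if an ample line bundle $M$ on a smooth projective variety $Y$ satisfies $-K_Y = aM$ with $a \geq \dim Y$, then either $a = \dim Y + 1$ and $(Y,M) \cong (\P^{\dim Y}, \mathcal{O}(1))$, or $a = \dim Y$ and $Y$ is a smooth hyperquadric embedded by $\mathcal{O}(1)$. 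Applied here with $a = n-1$ and $\dim Y = n - \dim B$, one concludes $\dim B \leq 2$; moreover $\dim B = 1$ forces $F$ to be a hyperquadric, while $\dim B = 2$ forces $F \cong \P^{n-2}$ embedded linearly by $H|_F$.

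The remaining step, and the main obstacle, is to globalize these pointwise statements into the uniform structure asserted in (2) and (3). For (2), the general fiber being an irreducible reduced hyperquadric makes $\varphi$ equidimensional over a dense open subset of $B$; combined with $\varphi_*\mathcal{O}_X = \mathcal{O}_B$ and the connectedness of fibers, $B$ is a normal one-dimensional variety, hence a smooth curve. For (3) one must also ensure that every fiber is a linear $\P^{n-2}$ and that $B$ is smooth. The standard route is to couple the cohomological rigidity of $\P^{n-2}$ under small deformations (making the relative tangent sheaf locally free and the Euler sequence deform flatly) with semicontinuity, yielding a Zariski-locally trivial $\P^{n-2}$-bundle and thereby the smoothness of $B$. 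These final, more delicate arguments belong to the adjunction-theoretic framework developed by Sommese and Fujita, which is the reason the result is usually cited rather than reproved from scratch.
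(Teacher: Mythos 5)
The paper does not actually prove this proposition: it is quoted verbatim from Ionescu (\cite[Proposition 1.11]{Io1}), so there is no in-paper argument to compare against. Judged on its own, your outline follows the standard route and its core is correct: after Stein factorization, adjunction on a general fiber $F$ gives $-K_F=(n-1)H|_F$ with $H|_F$ ample, and Kobayashi--Ochiai then forces $\dim B\leq 2$ whenever $\dim B<n$, with $F$ a hyperquadric when $\dim B=1$ and $F\cong\P^{n-2}$ linearly embedded when $\dim B=2$; the equivalence $\dim B=0\Leftrightarrow K_X+(n-1)H\sim 0$ is also right since a free linear system contracting $X$ to a point is trivial.

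The gap is in the last third, and you are candid about it: the proposition asserts that $\varphi$ \emph{gives} a hyperquadric fibration, resp.\ a linear fibration, which in adjunction theory means a statement about \emph{all} fibers (and, in case (3), the smoothness of the surface $B$, which does not follow from normality alone). Your remark that the general fiber being a quadric makes $\varphi$ ``equidimensional over a dense open subset'' is vacuous --- every dominant morphism has that property --- and does not address the special fibers; likewise the deformation-rigidity argument for $\P^{n-2}$ is named but not executed, and one must also reconcile the Stein factorization base with the image $B$ of the complete linear system. These are exactly the delicate points of Sommese--Fujita--Ionescu adjunction theory, so as written your text is a correct reduction plus a citation for the hard part, i.e.\ an outline rather than a self-contained proof --- which, to be fair, is also how the paper itself treats the statement.
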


For reader's convenience, we give the complete list of del Pezzo varieties (i.e., $-K_X = (n-1)H$), which was classified by Fujita (\cite{F1} and \cite{F2}; see also \cite[Theorem B]{Io4} and \cite[Section 12.1]{IP}).

\begin{theorem}[Fujita]\label{delpez}
Let $X \subsetneq \P^r$ be a non-degenerate linearly normal smooth del Pezzo variety of dimension $n \geq 2$ and degree $d$. Then, $d=r-n+2$ and $X$ is one of the following:
\begin{enumerate}[\indent$(a)$]
 \item a cubic hypersurface of dimension $n \geq 3$;
 \item a complete intersection of type $(2,2)$;
 \item the Pl\"{u}cker embedding of the Grassmannian $Gr(2,5)$ or its linear section;
 \item the Veronese threefold $v_2(\P^3)$;
 \item a del Pezzo surface embedded by the anticanonical divisor $-K_X$;
 \item the Segre embedding of $\P^2 \times \P^2$ or its hyperplane section;
 \item the Segre embedding of $\P^1 \times \P^1 \times \P^1$;
 \item $\P_{\P^2}(\mathcal{O}_{\P^2}(1) \oplus \mathcal{O}_{\P^2}(2))$ embedded by the tautological line bundle.
\end{enumerate}
\end{theorem}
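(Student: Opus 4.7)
The plan is a two-step reduction: first pin down the numerical invariants via iterated adjunction on a Fujita ladder, then classify the possible extensions of the surface case back up. Start by taking general hyperplane sections $X = X_n \supset X_{n-1} \supset \cdots \supset X_1 =: C$. By Bertini each $X_i$ is smooth, and iterated adjunction gives $K_{X_i} + (i-1)H_{X_i} = \mathcal{O}_{X_i}$; in particular $C$ is a smooth elliptic curve in $\P^{r-n+1}$. Riemann--Roch on $C$ together with Kodaira vanishing up the ladder (valid because each $H_{X_i} - K_{X_i} = i H_{X_i}$ is ample) yields inductively $h^0(X_i, H_{X_i}) = d + i - 1$. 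Linear normality at the top, $h^0(X, H) = r+1$, then forces $d = r - n + 2$, establishing the claimed relation between invariants.

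The same vanishing gives $h^1(X_i, \mathcal{O}_{X_i}(-H_{X_i})) = 0$, so each $X_i$ is projectively normal and $X_i \subset X_{i+1}$ is cut out on the ambient $\P^{r-n+i+1}$ by a single linear form. Classification thus reduces to (i) the base case $n=2$, and (ii) determining which del Pezzo $(i+1)$-folds contain a prescribed del Pezzo $i$-fold as a hyperplane section. For (i), the classical theory of anticanonically embedded del Pezzo surfaces (Castelnuovo and Del Pezzo) yields item (e): $\P^2$, $\P^1 \times \P^1$, and blow-ups of $\P^2$ at up to $7$ general points. For (ii), I would divide cases according to the type of the lower-dimensional section: cubic surfaces lift to the cubic hypersurface (a); $(2,2)$-complete-intersection del Pezzo surfaces lift to (b); the Veronese $v_2(\P^2) \subset \P^5$ lifts to $v_2(\P^3)$, giving (d); Segre-type surfaces yield (f), (g), and (h); and the degree $5$ del Pezzo surface, realized as a linear section of $Gr(2,5)$ in $\P^5$, lifts to (c). At each lifting step the relation $d = r - n + 2$ pins down the ambient projective space.

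The main obstacle is step (ii): ruling out exotic extensions. The deformation theory of a pair $X_i \subset X_{i+1}$ is controlled by the normal bundle $N_{X_i/X_{i+1}} \simeq \mathcal{O}_{X_i}(H_{X_i})$ together with the surjection $H^0(X_{i+1}, \mathcal{O}(H)) \twoheadrightarrow H^0(X_i, \mathcal{O}(H_{X_i}))$ coming from projective normality, but showing that the explicit finite list (a)--(h) is exhaustive requires a careful case analysis: controlling the Fano index, the Mori cone, and the Picard rank (via Barth--Larsen in small codimension, and geometric arguments in the remaining cases). This is precisely where Fujita's original proof, framed through the $\Delta$-genus (observing that del Pezzo varieties are exactly the polarized varieties with $\Delta = 1$ and sectional genus $1$), does the most delicate work; rather than reconstructing it from scratch, after verifying the reductions above I would appeal to \cite{F1,F2} for the final enumeration.
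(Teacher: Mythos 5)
The paper does not actually prove this statement: Theorem \ref{delpez} is quoted as Fujita's classification and is supported only by citations to \cite{F1}, \cite{F2} (and to \cite{Io4} and \cite{IP}). Your proposal is in the same spirit: the ladder reduction you set up is correct and standard (iterated adjunction gives $K_{X_i}+(i-1)H_{X_i}=0$, so the curve section is elliptic; Kodaira vanishing up the ladder gives $h^0(X_i,H_{X_i})=d+i-1$; linear normality then forces $d=r-n+2$), and you are right that the genuinely hard part is the exhaustive enumeration in step (ii), which you, like the paper, delegate to Fujita. As a comparison of approaches there is therefore little to say: both rest on \cite{F1}, \cite{F2}.

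Two concrete errors in your sketch of the case analysis should be corrected, since they would propagate if you actually carried out step (ii). First, in item (e) the anticanonical divisor of a del Pezzo surface is very ample only in degree at least $3$, i.e., for $\P^1\times\P^1$ and blow-ups of $\P^2$ at at most $6$ points in general position, not $7$ (for $7$ points the anticanonical map is the $2{:}1$ cover of $\P^2$); the paper records exactly this bound in the sentence following the theorem. Second, the Veronese surface $v_2(\P^2)\subset\P^5$ is not a del Pezzo surface in the sense used here: for $n=2$ the condition $-K_X=(n-1)H$ means the embedding is anticanonical, whereas $v_2(\P^2)$ is embedded by $\mathcal{O}_{\P^2}(2)$ and is a variety of minimal degree ($\Delta=0$). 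The hyperplane section of $v_2(\P^3)\subset\P^9$ is a smooth quadric surface embedded by $\mathcal{O}(2,2)$, i.e., the degree-$8$ anticanonical model of $\P^1\times\P^1$, so case (d) does not arise by lifting $v_2(\P^2)$. Neither slip destroys your overall architecture, since the enumeration is deferred to Fujita in any case, but both statements as written are false.
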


A del Pezzo surface whose anticanonical divisor is very ample is either $\P^1 \times \P^1$ or the blow-up of $\P^2$ at $r$ points in general position for $r \leq 6$.

\begin{remark}
In the above list, varieties from (a)$\sim$(c) are prime Fano, and varieties from (e)$\sim$(h) have the Picard number $\rho \geq 2$ except for the third Veronese surface $v_3(\P^2) \subset \P^9$ from (e).
\end{remark}

\subsection{Adjunction mappings of weak Fano varieties}\label{adjpf}

The following is the main theorem of this section. This subsection is devoted to proving the following theorem.

\begin{theorem}\label{adj}
Let $X \subset \P^r$ be an $n$-dimensional non-degenerate smooth projective variety of degree $d \leq r+1$, and let $H$ be a general hyperplane section. Assume that $n \geq 2$ and $X$ is a weak Fano variety but not a rational scroll. Then, one of the following holds:
\begin{enumerate}[\indent$(a)$]
 \item $X$ is prime Fano, i.e., $-K_X = \ell H$ for some $\ell >0$ and $\Pic(X) = \Z[H]$.
 \item $X$ is a del Pezzo variety, i.e., $-K_X = (n-1)H$.
 \item $X$ is a Veronese surface $v_2(\P^2)$ in $\P^5$ or $\P^4$, or a quadric hypersurface $\P^1 \times \P^1 \simeq Q \subset \P^3$.
 \item $|K_X + (n-1)H|$ induces a hyperquadric fibration over $\P^1$.
 \item $|K_X + (n-1)H|$ induces a linear fibration over $\P^2$ or $\P^1 \times \P^1$.
\end{enumerate}
In particular, if $X$ is not a Fano variety but a weak Fano variety, then it is a rational variety.
\end{theorem}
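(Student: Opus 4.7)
The plan is to analyze the adjoint system $|K_X+(n-1)H|$. Under our hypotheses, Theorem \ref{main} ensures that $-K_X$ is nef and big, hence $X$ is rationally connected and simply connected by \cite[Corollary 1.4]{HM}; combined with the exclusion of rational scrolls by hypothesis and the failure of simple connectedness for elliptic scrolls, this shows that $X$ is not a scroll over a curve. The argument then splits according to whether $|K_X+(n-1)H|$ is base point free.

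If it is not base point free, I invoke \cite[Theorem 1.4]{Io1}, which classifies this failure: for $n\geq 3$ one obtains prime Fano varieties (case (a)) or scrolls (excluded above); for $n=2$ one additionally admits the Veronese surface $v_2(\P^2)\subset \P^5$ or its isomorphic projection to $\P^4$, and the smooth quadric $\P^1\times\P^1\simeq Q^2\subset \P^3$, yielding case (c). If $|K_X+(n-1)H|$ is base point free, the adjunction morphism $\varphi\colon X\to B$ is defined, and Proposition \ref{baadjth} yields four subcases according to $\dim B$. When $\dim B=0$, one has $-K_X=(n-1)H$, i.e., $X$ is del Pezzo, giving case (b). When $\dim B=1$, $\varphi$ is a hyperquadric fibration over a curve; since $X$ is rationally connected, so is $B$, forcing $B\simeq\P^1$ and yielding case (d). When $\dim B=2$, $\varphi$ is a linear fibration over a smooth rational surface, and I apply \cite[Proposition 4]{IT1} together with the degree bound $d\leq r+1$ and Lemma \ref{scrcod} to narrow $B$ to $\P^2$ or $\P^1\times\P^1$, yielding case (e).

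The remaining possibility $\dim B=n$, corresponding to birational adjunction, must be ruled out, and this is the main obstacle. The plan is to combine bigness of both $-K_X$ and $K_X+(n-1)H$ with the small-degree constraint $d\leq r+1$: expanding $(K_X+(n-1)H)^n>0$ and $(-K_X)^n>0$, and employing the Castelnuovo bound on the sectional genus (as in Lemma \ref{secg}), one forces a numerical contradiction. Should this direct approach prove awkward, a fallback is to appeal to Ionescu's classification tables in \cite{Io1,Io2,Io3}, which list no weak Fano variety of birational-adjunction type with $d\leq r+1$.

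For the final rationality assertion, cases (a), (b), (c) are all Fano (in (a) and (b), $-K_X$ is a positive integral multiple of the ample class $H$, while (c) is explicitly Fano), so a non-Fano weak Fano $X$ necessarily falls in (d) or (e). In (d), $X$ is a smooth hyperquadric fibration over $\P^1$; since the generic fibre is a quadric in at least three variables over the $C_1$-field $k(\P^1)$, Tsen's theorem supplies a rational point, so the fibration is birationally trivial and $X\sim_{\text{bir}}\P^{n-1}\times\P^1$ is rational. In (e), $X$ is birational to a projective bundle over $\P^2$ or $\P^1\times\P^1$, hence also rational.
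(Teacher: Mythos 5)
Your skeleton (the base-point-free dichotomy for $|K_X+(n-1)H|$, then Proposition \ref{baadjth}) is the natural one, and your treatment of the non-bpf case, of $\dim B=0,1$, and of the final rationality assertion is sound. The genuine gap is the exclusion of $\dim B=n$. Your primary plan --- a numerical contradiction from $(K_X+(n-1)H)^n>0$, $(-K_X)^n>0$, $d\leq r+1$ and the sectional genus bound --- cannot work: there is no numerical obstruction to $-K_X$ and $K_X+(n-1)H$ being simultaneously nef and big. The only inequality available in that direction is of Khovanskii--Teissier/Hodge-index type, $(K_X+(n-1)H)^n\cdot d^{\,n-1}\leq\bigl((K_X+(n-1)H)\cdot H^{n-1}\bigr)^n=(2g-2)^n$, which stays strictly positive as soon as $g\geq 2$; and writing $(n-1)H$ as the sum of the two nef classes only produces lower bounds on $d$, never an upper one. (Indeed, on a del Pezzo surface one can choose polarizations making both classes big --- such examples are only excluded because they violate $d\leq r+1$, which is exactly what must be proved and is not visible at the level of these intersection numbers.) The paper rules out $\dim B=n$ not numerically but by quoting Ionescu's classification \cite[Theorem I]{Io2}, which is your ``fallback''; so the fallback is not optional, it is the proof.

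If you take that route you must check its hypotheses and absorb its residual cases, which your sketch does not do. Theorem I of \cite{Io2} requires $d\leq 2e+2$, and $d\leq r+1=n+e+1$ gives this only when $e\geq n-1$; so you must first split off $e\leq n-2$, where the Barth--Larsen theorem forces $\Pic(X)=\Z[H]$ and hence case $(a)$ directly (this is the paper's first step). Even where Theorem I applies, its list contains more than cases $(b)$--$(e)$: one must dispose of the sporadic case with $-K_X=(n-2)H$ (the paper's Lemma \ref{fano}, using Wi\'sniewski's classification to get $\rho=1$), and the low sectional genus cases $g\leq 4$ are handled by separate classification results of Fujita and Ionescu. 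Finally, your one-line reduction of the surface base to $\P^2$ or $\P^1\times\P^1$ via \cite[Proposition 4]{IT1} and Lemma \ref{scrcod} understates what is needed: the paper's Lemma \ref{lf} first computes $h^0(K_X+(n-1)H)=g$, identifies $S\subset\P^{g-1}$ as a variety of $\Delta$-genus $0$ via the scroll $Y=\varphi^{-1}(H_S)$ and the degree bound of Lemma \ref{secg}, and then eliminates the Hirzebruch surfaces $F_a$ with $a\geq 1$ by a degree count on $\varphi^{-1}(C)$ and $\varphi^{-1}(F)$. As written, your proposal would not compile into a complete proof without this material.
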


We will discuss the classification of prime Fano varieties of degree $d \leq r+1$ (Case $(a)$ of Theorem \ref{adj}) in Subsection \ref{prfano}.

\begin{remark}
(1) Every case of Theorem \ref{adj} really occurs (see Subsection \ref{exs}).\\[1pt]
(2) Not every prime Fano variety is rational. E.g., smooth cubic threefolds in $\P^4$, which are also del Pezzo varieties, are not rational.
\end{remark}

To prove Theorem \ref{adj}, we need some lemmas.

\begin{lemma}[cf. {\cite[Lemma 7]{Io4}}]\label{hqf}
Let $X \subset \P^r$ be a non-degenerate smooth projective variety of degree $d \leq r+1$. Assume that the adjunction mapping $\varphi \colon X \rightarrow C$ induces a hyperquadric fibration over a curve $C$. Then, $C \simeq \P^1$.
\end{lemma}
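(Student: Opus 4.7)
The plan is to combine the global classification given in Theorem~\ref{main} with the elementary fact that a surjective image of a rationally connected variety is rationally connected. Since by hypothesis $K_X+(n-1)H$ is base point free and its complete linear system realizes $\varphi\colon X\to C$ as a hyperquadric fibration (in particular $\dim C=1$), I would first determine which of the four cases of Theorem~\ref{main} is compatible with this setup.

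The Calabi-Yau hypersurface of case (c) has $K_X=\mathcal{O}_X$, hence $K_X+(n-1)H=(n-1)H$; the induced morphism is then a Veronese-type reembedding whose image has dimension $n\geq 2$, so it cannot be a fibration over a curve. In case (d), an elliptic curve is excluded by $n\geq 2$, while an elliptic scroll is excluded because $K_X+(n-1)H$ is not base point free for scrolls (\cite[Theorem~1.4]{Io1}). Therefore $X$ must be of type (a) (weak Fano) or type (b) (Roth but neither rational scroll nor hypersurface).

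In case (a), $X$ is rationally connected by \cite[Corollary~1.4]{HM}, as already invoked in the proof of Theorem~\ref{main}. In case (b), the constraint $d\leq r+1$ together with Proposition~\ref{rocl} forces $b\leq 2$, and since $n\geq 2$ this gives $b\leq n$, so Proposition~\ref{roraco} yields that $X$ is rationally connected. In either surviving case, therefore, $X$ is rationally connected; since $\varphi$ is surjective onto the smooth projective curve $C$, it follows that $C$ is rationally connected as well, and a smooth projective rationally connected curve must be isomorphic to $\P^1$.

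The only delicate point I foresee is the careful exclusion of the non--rationally-connected cases (c) and (d) of Theorem~\ref{main}, since these are precisely the parts of the classification where the base of a fibration could conceivably have positive genus; once those are eliminated via the base-point-freeness and the dimension of the target, the conclusion follows directly from rational connectedness with no further computation.
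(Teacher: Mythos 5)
Your proof is correct, but it takes a genuinely different route from the paper's. The paper's argument is: exclude the elliptic scroll case (as you do, since the adjoint bundle of a scroll restricts to $\mathcal{O}_{\P^{n-1}}(-1)$ on fibers and so has no sections), then invoke Theorem \ref{main} to get $q=h^1(\mathcal{O}_X)=0$, and finally cite \cite[Lemma~6]{Io4}, which identifies $q(X)$ with the genus of the base $C$ of a hyperquadric fibration; hence $g(C)=0$. You instead exclude cases (c) and (d) of Theorem \ref{main} directly (case (c) because $(n-1)H$ is very ample and so cannot define a fibration over a curve, case (d) by base-point-freeness and dimension), conclude that $X$ is rationally connected from the ``in particular'' clause of Theorem \ref{main} (your re-derivation via Propositions \ref{rocl} and \ref{roraco} is fine but unnecessary), and then use that a smooth curve dominated by a rationally connected variety is $\P^1$. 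Your route avoids \cite[Lemma~6]{Io4} entirely and needs only the surjectivity of $\varphi$, so it is more self-contained given Theorem \ref{main}; the paper's route via $q(X)=g(C)$ is sharper in that it would still apply to a regular variety that is not rationally connected (e.g.\ it handles case (c) without having to rule it out). One small citation quibble: \cite[Theorem~1.4]{Io1} is stated as the converse implication (non-base-point-free implies prime Fano or scroll), so for the exclusion of elliptic scrolls you should instead appeal to the direct computation $h^0(K_X+(n-1)H)=0$ on a scroll; this does not affect the validity of your argument.
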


\begin{proof}
Note that $X$ is not an elliptic scroll. By Theorem \ref{main}, we have $q=h^1(\mathcal{O}_X)=0$, which coincides with the genus of $C$ by \cite[Lemma 6]{Io4}.
\end{proof}


\begin{lemma}[cf. {\cite[Proposition 5]{Io4}}]\label{lf}
Let $X \subset \P^r$ be a non-degenerate smooth projective variety of degree $d \leq r+1$. Assume that the adjunction mapping $\varphi \colon X \rightarrow S$ induces a linear fibration over a surface $S$. Then, $S \simeq \P^2$ or $\P^1 \times \P^1$.
\end{lemma}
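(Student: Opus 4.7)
The plan is to reduce to the case that $X=\P_S(E)$ is the projectivization of a very ample rank-$(n-1)$ vector bundle $E$ on a smooth rational surface $S$, and then to pin down $S$ via the degree bound together with the classification of minimal rational surfaces.

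First I would observe that under our hypotheses $X$ must be the weak Fano case (a) of Theorem~\ref{main}. Indeed, Roth varieties and scrolls over a curve admit fibrations only over a curve; a Calabi--Yau hypersurface has $K_X+(n-1)H=(n-1)H$ very ample, so its adjunction map is an embedding and cannot have a surface image; the elliptic scroll and elliptic/curve cases are incompatible with a linear fibration over a surface since $n\geq 3$. Hence $X$ is weak Fano, in particular rationally connected and with $h^i(\mathcal{O}_X)=0$ for $i>0$.

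Next, since $\varphi$ is a linear fibration with general fiber $\P^{n-2}$ on which $H$ restricts to $\mathcal{O}(1)$, a standard structure argument gives $X\simeq\P_S(E)$ with $E=\varphi_*\mathcal{O}_X(H)$ of rank $n-1$, and after twisting $E$ by a line bundle on $S$ one may assume $H=\xi$, the tautological class. Because $R^j\varphi_*\mathcal{O}_X=0$ for $j>0$, the Leray spectral sequence transfers $h^i(\mathcal{O}_X)=0$ to $h^i(\mathcal{O}_S)=0$ for $i>0$, and rational connectedness descends from $X$ to $S$, so $S$ is a smooth rational surface with $q(S)=p_g(S)=0$. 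In this normalization $K_X+(n-1)H=\varphi^*(K_S+\det E)$, so $M:=K_S+\det E$ is big on $S$ and defines the morphism $S\to\varphi(X)\subset\P^N$ factoring the adjunction map.

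The core numerical input is the identity $d=\xi^n=\int_S(c_1(E)^2-c_2(E))$ together with $h^0(S,E)=h^0(X,H)\geq r+1$ and the very ampleness of $E$. Using these I would first show $S$ is minimal: if $\epsilon\colon S\to S'$ contracts a $(-1)$-curve $C$, testing positivity of $E$ against $C$ and pushing down yields a sharper bound on $c_1(E)^2-c_2(E)$ contradicting $d\leq r+1$. Once minimality is established, $S\simeq\P^2$ or $S\simeq\mathbb{F}_m$ with $m\neq 1$. To exclude $m\geq 2$, I would restrict $E$ to the negative section $C_0$ with $C_0^2=-m$: the very ampleness of $E|_{C_0}$ on $\P^1\simeq C_0$ together with the degree bound forces $m\leq 1$, leaving only $S=\P^2$ or $S=\mathbb{F}_0=\P^1\times\P^1$. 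The main obstacle is this last step: extracting numerical inequalities sharp enough to rule out every non-minimal rational surface and every $\mathbb{F}_m$ with $m\geq 2$ uniformly in $n$. Ionescu's \cite[Proposition~5]{Io4} handles the stricter bound $d\leq r$ and produces only $\P^2$; one extra unit of degree widens the feasible region just enough to admit $\P^1\times\P^1$, and tracking this marginal case carefully will be the delicate part of the argument.
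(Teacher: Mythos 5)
Your reduction to the weak Fano case and the structure $X\simeq\P_S(E)$ with $E=\varphi_*\mathcal{O}_X(H)$ are fine as far as they go, but the proof stops exactly where the difficulty begins. The two decisive claims --- that contracting a $(-1)$-curve ``yields a sharper bound on $c_1(E)^2-c_2(E)$ contradicting $d\leq r+1$,'' and that very ampleness of $E|_{C_0}$ ``forces $m\leq 1$'' --- are asserted, not argued, and it is not clear they can be carried out uniformly in $n$. The inequality you actually have in hand, $d=c_1(E)^2-c_2(E)\leq h^0(S,E)$, is the surface analogue of the Ionescu--Toma bound $h^0(E)+n-2\leq h^0(\det E)$ used in Proposition \ref{screll}, but no such bound is available for very ample bundles on surfaces, and a crude Chern-class count against a $(-1)$-curve or the negative section of $\mathbb{F}_m$ does not by itself produce the needed contradiction (in concrete examples on $\mathbb{F}_2$ the condition that $K_S+\det E$ be big and define the fibration over $S$, rather than collapsing to a map to $\P^1$, is what really kills the low-degree cases, and your outline never brings that condition into the numerics). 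So as written there is a genuine gap at the heart of the argument, which you yourself flag as ``the delicate part.''

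For comparison, the paper avoids working with $E$ on $S$ altogether and instead slices the \emph{base}: it first shows $\varphi$ maps $X$ to $S\subset\P^{g-1}$ where $g$ is the sectional genus, so that $Y:=\varphi^{-1}(H_S)$ is an $(n-1)$-dimensional scroll over the curve $H_S$ of degree $2g-2$. The codimension bound for scrolls (Lemma \ref{scrcod}, from Barth--Larsen) together with the sectional genus bound $g\leq n$ (Lemma \ref{secg}) then places $Y$ in the range where Theorem \ref{minsc} applies, forcing $H_S$ to be rational; Fujita's classification gives $S\simeq\P^2$ or a Hirzebruch surface, and a second slicing over the section $C$ and a fiber $F$ of $\mathbb{F}_a$ (estimating $\deg\varphi^{-1}(C)$ and $\deg\varphi^{-1}(F)$ from below by $n-1$) pins down $b=1$, $a=0$. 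Note that in the paper the degree hypothesis $d\leq r+1$ enters only through Lemmas \ref{secg} and \ref{scrcod} and the curve-level scroll bounds --- not through a direct inequality on $c_1^2-c_2$ --- which is precisely the ingredient your plan is missing. If you want to salvage your approach, you would need to restrict $E$ to curves on $S$ and invoke the Ionescu--Toma inequality there, at which point you are essentially reconstructing the paper's slicing argument.
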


\begin{proof}
We will closely follow \cite[Proof of Proposition 5]{Io4}. Note that the case $d \leq r$ is treated in \cite[Proposition 5]{Io4}. We may assume that $d=r+1$. By the Barth-Larsen Theorem (\cite[Corollary 3.2.3]{posI}), we can further assume that $e \geq n-1$. For the case $e = n-1$, we already verified the assertion in Lemma \ref{bord}. Thus, assume that $e \geq n$. Since $X$ is not an elliptic scroll, we have $q=h^1(\mathcal{O}_X)=0$.

Let $S' = X \cap H_1 \cap \cdots \cap H_{n-2}$ be a smooth surface, where each $H_i$ is a generic hyperplane section. By the adjunction formula and Lemma \ref{secg}, we have
$$
2g-2 = H_{S'}^2 + H_{S'}.K_{S'} = d + H_{S'}.K_{S'} \geq 2g+1 + H_{S'}.K_{S'},
$$
and hence, $H_{S'}.K_{S'}<0$. In particular, $h^0(\mathcal{O}_{S'}(K_{S'}))=h^2(\mathcal{O}_{S'})=0$. Now, observe that $n \geq 3$. We have the following exact sequence
$$
0 \rightarrow \mathcal{O}_X (K_X + (n-2)H) \rightarrow \mathcal{O}_X (K_X + (n-1)H) \rightarrow \mathcal{O}_H (K_H + (n-2)H_H) \rightarrow 0.
$$
Since $K_X + (n-1)H$ is not big, $h^0(\mathcal{O}_X (K_X + (n-2)H))=0$. Furthermore, by Kodaira Vanishing, $h^1(\mathcal{O}_X (K_X + (n-2)H))=0$. Let $g$ be the sectional genus of $X \subset \P^r$. Then, we have
$$
h^0(\mathcal{O}_X(K_X + (n-1)H)) = h^0(\mathcal{O}_H(K_H + (n-2)H_H)) = \cdots = h^0(\mathcal{O}_{S'}(K_{S'} + H_{S'})),
$$
and by \cite[Lemma 1.1]{Io1}, $h^0(\mathcal{O}_{S'}(K_{S'} + H_{S'}))=g$. Thus, we get $\varphi \colon X \rightarrow S \subset \P^{g-1}$. Let $H_S$ be a generic hyperplane section of $S \subset \P^{g-1}$, and let $Y := \varphi^{-1}(H_S)$. Note that $Y$ is a scroll of dimension $n-1$ and degree $d_Y = (K_X + (n-1)H).H^{n-1}=2g-2$ over the curve $H_S$. Let $m$ be the dimension of the smallest linear subspace of $\P^r$ containing $Y$, i.e., $Y \subset \P^m$ is non-degenerate. By Lemma \ref{scrcod}, $m \geq 2(n-1)-1 = 2n-3$. By Lemma \ref{secg}, $m \geq 2n-3 \geq 2g-3 = d_Y -1$. Thus, by Theorem \ref{minsc}, the genus $g'$ of $H_Y$ is 0 or 1. Suppose that $g'=1$. By Lemma \ref{scrcod}, we must have $m \geq 2(n-1)$. It follows that $m \geq 2n-2 \geq 2g-2 = d_Y$, which is a contradiction to Theorem \ref{minsc}. Thus, $g'=0$. By Fujita's classification (\cite[Theorem A]{Io4}), $S \simeq \P^2$ (in this case, we have $g= \Delta(X, H) =3$ or $g=6$) or $S$ is a scroll over $\P^1$, i.e., Hirzebruch surface.

It suffices to show that if $S$ is a Hirzebruch surface, then $S \simeq \P^1 \times \P^1$. Suppose that $S \simeq F_a := \P_{\P^1}(\mathcal{O}_{\P^1} \oplus \mathcal{O}_{\P^1}(-a))$ be a Hirzebruch surface for some integer $a \geq 0$. Then, $H_S = C + bF$, where $C$ is a section with $C^2=-a$ and $F$ is a general fiber of the projection $F_a \rightarrow \P^1$, such that $b > a$. Let $Y_0 := \varphi^{-1}(C)$ and $Y_1 := \varphi^{-1}(F)$ be $(n-1)$-dimensional rational scrolls. For each $i$, let $m_i$ be the dimension of the smallest linear subspace of $\P^r$ containing $Y_i$, and let $d_i$ be the degree of $Y_i \subset \P^{m_i}$. Then, by Lemma \ref{scrcod}, $m_i \geq 2(n-1)-1 = 2n-3$. Thus, $d_i \geq m_i - (n-1) + 1 \geq n-1$. It follows that
$$
2g-2 = d_Y = d_0 + b d_1 \geq d_0 + d_1 \geq 2(n-1),
$$
and hence, we get $g \geq n$. By Lemma \ref{secg}, we obtain $g=n$. Thus, we must have $b=1$, and hence, $a=0$, i.e., $S = \P^1 \times \P^1$. In this case, we have $n=g = h^0(\mathcal{O}_S(H_S))=4$. (By the same argument, we can show that if $S \simeq \P^2$ and $g=6$, then $n=6$.)
\end{proof}

\begin{lemma}\label{fano}
Let $X \subset \P^{2n-1}$ be a non-degenerate smooth projective variety of dimension $n \geq 3$ and degree $d \leq 2n$, and let $H$ be a general hyperplane section. If $-K_X = (n-2)H$, then $X$ is a prime Fano variety.
\end{lemma}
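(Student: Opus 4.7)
The plan is to argue the two parts of the statement---ampleness of $-K_X$ and Picard rank one---separately. Since $n-2\geq 1$ and $H$ is very ample, $-K_X=(n-2)H$ is automatically ample, so $X$ is Fano. The substantive content lies in proving that $\Pic(X)$ has rank one.

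For the Picard number, I would first observe that because the codimension of $X$ in $\P^{2n-1}$ equals $n-1$, the Barth--Larsen theorem forces $X$ to be simply connected, and Theorem \ref{main} applies. Calabi--Yau hypersurfaces and elliptic scrolls/curves are incompatible with $-K_X$ being ample, and the Roth alternative is ruled out by a short intersection-theoretic computation: by Proposition \ref{rocl}, the only Roth structures with $d\leq 2n$ and $e=n-1$ are rational scrolls and $b=2$ Roth varieties of degree $2n-1$; in the latter case, the adjunction formula $K_X=\bigl((b-n-1)H+(n-2)F\bigr)|_X$ combined with $-K_X=(n-2)H$ would force $H|_X=(n-2)F|_X$, which contradicts $(F|_X)^2=0$ and $(H|_X)^{n-1}\cdot F|_X=b=2$. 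Consequently $X$ is weak Fano, and applying \cite[Theorem I]{Io2} as in the proof of Theorem \ref{main} for $e=n-1$ (the classification covers the entire range $d\leq 2e+2=2n$), $X$ must be either prime Fano, a scroll over a smooth curve, a linear fibration over a smooth surface, or a hyperquadric fibration over a smooth curve.

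The decisive step is to exclude the three fibration cases via the identity $-K_X=(n-2)H$. In each of these cases I would take a general fiber $F$, which is by construction linearly embedded in $\P^{r}$, so that $H|_F$ coincides with the standard hyperplane class $H_F$ on $F$. The usual adjunction identities give $-K_F=nH_F$ for $F\simeq\P^{n-1}$ (scroll), $-K_F=(n-1)H_F$ for $F\simeq\P^{n-2}$ (linear fibration over a surface), and $-K_F=(n-1)H_F$ for $F\simeq Q^{n-1}$ (hyperquadric fibration). Comparing with $-K_X|_F=(n-2)H_F$ produces either $(n-2)H_F=nH_F$ or $(n-2)H_F=(n-1)H_F$, each of which contradicts the ampleness of $H_F$. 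Hence $X$ is prime Fano.

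The main subtlety I anticipate is the invocation of \cite[Theorem I]{Io2} at the strict inequality $d<2n$; the proof of Theorem \ref{main} writes its argument only at the boundary case $d=2n$, but the classification theorem applies verbatim to the full range $d\leq 2e+2$. All other steps are purely numerical.
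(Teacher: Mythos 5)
Your reductions (ampleness of $(n-2)H$, exclusion of the Roth case by the adjunction computation on the scroll, and exclusion of scrolls, hyperquadric fibrations and linear fibrations by restricting $-K_X=(n-2)H$ to a general fiber and comparing with $-K_F$) are all fine. The problem is the step where you conclude from \cite[Theorem I]{Io2} that the only remaining alternative is ``prime Fano.'' That is not what the residual case of Ionescu's theorem gives: after the scroll/fibration/exceptional cases and the (almost) minimal degree cases are removed, what remains is precisely a Fano variety with $-K_X=(n-2)H$ --- i.e.\ exactly the hypothesis of the lemma, with no information about $\Pic(X)$. This is why the paper needs Lemma \ref{fano} at all: it is invoked in the proof of Theorem \ref{adj} as the bridge from Ionescu's case ``$-K_X=(n-2)H$'' to ``prime Fano.'' Reading that conclusion back out of \cite[Theorem I]{Io2} makes your argument circular, and nothing else in your proof addresses $\Pic(X)=\Z[H]$. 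Note that Barth--Larsen cannot help here either: in codimension $e=n-1$ it gives simple connectedness but not $\Pic(X)\simeq\Z$.

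A concrete illustration of the gap: $\P^3\times\P^3$ with the Segre polarization is a simply connected Fano $6$-fold with $-K_X=(n-2)H$ and Picard number $2$; it is not a scroll over a curve, not a hyperquadric fibration over a curve, and not a linear fibration over a surface, so it passes through every sieve in your argument. It fails to be a counterexample only because $\deg(\P^3\times\P^3,\mathcal{O}(1,1))=20>12$, i.e.\ it cannot be embedded in $\P^{11}$ with degree at most $2n=12$ --- a check your proof never performs. The paper's actual proof supplies the missing content: for $n\le 4$ it reads the Picard number off Ionescu's classification tables; for $n\ge 7$ it uses Wi\'{s}niewski's theorem \cite[Theorem A]{W1} that a Fano $n$-fold of index greater than $(n+2)/2$ has $\rho=1$; for $n=6$ it uses \cite[Theorem B]{W2} and excludes $\P^3\times\P^3$ by the degree count above; for $n=5$ it passes to a general hyperplane section, which is a Fano fourfold of index $2$ with $H_Y^4=10$, and uses Wi\'{s}niewski's classification of such fourfolds with $\rho\ge 2$ together with the Lefschetz theorem. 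Only then does the torsion-freeness of $\Pic$ of a Fano variety plus $m^nL^n\le 2n$ give that $H$ itself is the generator. You need to supply an argument of this kind (or an equivalent one) for the Picard number; without it the lemma is not proved.
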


\begin{proof}
By Ionescu's classification (see tables in Introductions of \cite{Io1} and \cite{Io3}), $X$ is a complete intersection provided that $n \leq 4$. Thus, assume $n \geq 5$. We claim that the Picard number $\rho(X)$ of $X$ is $1$. By \cite[Theorem A]{W1}, if $n \geq 7$, then $\rho(X)=1$. Consider the case $n=6$. By \cite[Theorem B]{W2}, either $\rho(X)=1$ or $X \simeq \P^3 \times \P^3$. However, $\P^3 \times \P^3$ cannot have an embedding in $\P^{11}$ with degree $d \leq 12$ so that $\rho(X)=1$ for $n=6$. Suppose that $n=5$. A general hyperplane section $Y$ of $X$ is a Fano fourfold with $-K_Y = 2H_Y$ and $H_Y^4 = 10$. Clearly, a general member of $|H_Y|$ is irreducible and smooth. Thanks to Wi\'{s}niewski's classification (\cite{W2}; see also \cite[Secion 12.7]{IP}), we see that there is no such Fano fourfold with Picard number $\rho(Y) \geq 2$. By the Lefschetz Theorem for Picard Group, we obtain $\rho(X)=\rho(Y)=1$. We have shown the claim.

Since the Picard group of a Fano variety is torsion-free (\cite[Proposition 2.1.2]{IP}), $\Pic(X)$ is generated by an ample divisor $L$. Let $H = mL$ for some integer $m \geq 1$. Then, we have $d=H^n = m^n L^n \leq 2n$. Since $n \geq 5$, we must have $m=1$. Thus, $X$ is prime Fano.
\end{proof}

We are ready to prove Theorem \ref{adj}.

\begin{proof}[Proof of Theorem \ref{adj}]
Let $e = r-n$ be the codimension. We divide into three cases. Firstly, assume that $e \leq n-2$. By the Barth-Larsen Theorem (\cite[Corollary 3.2.3]{posI}), $X$ is a prime Fano variety. Secondly, assume that $e=n-1$ (then, $d \leq 2e+2$). By Ionescu's classification (see tables in Introductions of \cite{Io1} and \cite{Io3}), we can easily verify the assertion for $n \leq 4$. For $n \geq 5$, by \cite[Theorem I]{Io2}, the assertion follows from Lemmas \ref{hqf}, \ref{lf}, and \ref{fano}. Finally, suppose that $e \geq n$ (then, $d \leq 2e+1$). We denote by $g$ the sectional genus. By Lemma \ref{secg}, $g \leq n$. If $g \leq 1$, then the assertion follows from Fujita's classification (see \cite[Proposition 2.4]{Io1} and \cite[Theorems A and B]{Io4}). If $g=2$, then $X$ has a hyperquadric fibration over $\P^1$ by \cite[Corollary 3.3]{Io1} for $n \geq 3$ and the Castelnuovo's Theorem (see e.g., \cite[Proposition 3.1]{Io1}) for $n=2$. If $g=3$ (resp. $g=4$), then the assertion follows from \cite[Theorem 4.2]{Io1} (resp. \cite[Theorem 11.6.3]{BS} together with Lemmas \ref{hqf} and \ref{lf}). It remains the case $5 \leq g \leq n$. If $K_X + (n-1)H$ is not base point free, then $X$ is a prime Fano variety by \cite[Theorem 1.4]{Io1}. Now, suppose that we can define the adjunction mapping $\varphi \colon X \rightarrow B$. By \cite[Theorem I]{Io2}, we only have to consider the cases $\dim B = 1$ or $2$, and then, the remaining part immediately follows from Lemmas \ref{hqf} and \ref{lf}.
\end{proof}

\begin{remark}
We can analyze in detail the cases (d) and (e) in Theorem \ref{adj}. If $X$ is from (d), then it is a divisor of a rational scroll (\cite[Lemma 6]{Io4}). If $X$ is from (e) and the base is $\P^2$, then $g = \Delta(X, H)=3$ (e.g., Bordiga threefold in Example \ref{lfex}) or $n=g =6$ (e.g., Segre embedding of $\P^2 \times \P^4$). If $X$ is from (e) and the base is $\P^1 \times \P^1$, then $n=g=\Delta(X,H)=4$, and by \cite[Theorem 6.3]{LN}, there is only one case (see Example \ref{fiex}).
\end{remark}

\subsection{Examples}\label{exs}

We give examples of weak Fano varieties having fibrations coming from adjunction mappings.

\begin{example}\label{wfex}
(1) Let $F_2 := \P_{\P^1}(\mathcal{O}_{\P^1} \oplus \mathcal{O}_{\P^1}(-2))$ be a Hirzebruch surface with the fibration $f \colon F_2 \rightarrow \P^1$. Then, $F_2$ is not a Fano surface but a weak Fano surface. For the embedding $X:= \varphi_{|H|}(F_2) \subset \P^{11}$ given by a very ample divisor $H:=2C + 5F$, where $F$ is a fiber of $f$ and $C$ is a section of $f$ with $C^2=-2$, we have $H^2 = 12$, i.e., $\deg(X)=12$. Note that $K_X + H \sim F$ and $F.H=2$, and hence, the adjunction mapping $\varphi_{|K_X+H|}$ gives a hyperquadric fibration over $\P^1$.\\[1pt]
(2) Let $\pi \colon \widetilde{F}_2 \rightarrow F_2$ be the blow-up at a point not in $C$, and let $E$ be the exceptional divisor. Then, $\widetilde{F}_2$ is also not a Fano surface but a weak Fano surface. Furthermore, the very ample divisor $H':=\pi^{*}(H)-E$ gives an embedding $X' :=\varphi_{|H'|}(\widetilde{F}_2) \subset \P^{10}$ with $\deg(X')=11$. Note that $K_{X'}+H' \sim \pi^{*}F$ and $\pi^{*}F.H'=2$, and hence, the adjunction mapping $\varphi_{|K_{X'}+H'|}$ gives a hyperquadric fibration over $\P^1$ with a singular fiber.
\end{example}

\begin{example}\label{lfex}
There exists a stable vector bundle $E$ of rank $2$ on $\P^2$ such that it is given by an extension
$$
0 \rightarrow \mathcal{O}_{\P^2} \rightarrow E \rightarrow \mathcal{I}_Y (4) \rightarrow 0,
$$
where $Y$ is a closed subscheme of $\P^2$ consisting of $10$ distinct points, $c_1(E)=4, c_2(E)=10$, and $E|_L \simeq \mathcal{O}_L(2) \oplus \mathcal{O}_L(2)$ when $L$ is a generic line (see \cite[Proposition 7.5]{Io1} and \cite{Ot}). Then, for $X :=\P_{\P^2} (E) \subset \P^5$, we have $\deg(X)=6$, and the adjunction mapping induces a linear fibration over $\P^2$. This $X$ is called the \emph{Bordiga threefold}. Now, we show that $X$ is a weak Fano variety but not a Fano variety. Recall that $X$ is a weak Fano variety with $(-K_X)^3=6$. By the classification of Mori and Mukai (\cite{MM}; see also \cite[Section 12.3]{IP}), there is no rational Fano threefold with $(-K)^3=6$, and hence, $X$ is not a Fano variety.
\end{example}

More examples having fibrations over $\P^1$ or $\P^2$ can be found in \cite{Io4}.

\begin{example}\label{fiex}
Let $Q:= \P^1 \times \P^1$ and $E:=\mathcal{O}_{Q}(1,1)^{\oplus 3}$. Consider the embedding $X := \P_{Q}(E) \subset \P^{11}$ given by the complete linear system $|\mathcal{O}_{\P_{Q}(E)}(1)|$. Then, $\deg(X)=12$. Furthermore, $X$ is a Fano variety, and the adjunction mapping induces a linear fibration over a quadric hypersurface in $\P^3$.
\end{example}

\subsection{Prime Fano varieties of small degree}\label{prfano}

Here, we further investigate the prime Fano case in Theorem \ref{adj}. Let $X \subset \P^r$ be a non-degenerate prime Fano variety of dimension $n$, codimension $e$, and degree $d \leq r+1$. If $e \leq \frac{n-1}{2}$, then Hartshorne conjectured that $X$ must be a complete intersection (\cite{Ht}). If $e \geq n$, then by \cite[Theorem I]{Io2}, $X$ is a del Pezzo variety. It only remains the case $e+1 \leq n \leq 2e$. We may write $-K_X = \ell H$ for some integer $\ell >0$, where $H$ is the ample generator of $\Pic(X)$. If $\ell \geq n-1$, then $(X, \mathcal{O}_X(H)) \simeq (\P^n, \mathcal{O}_{\P^n}(1))$, $X$ is a hyperquadric in $\P^{n+1}$, or $X$ is a del Pezzo variety (see e.g., \cite[Corollary 2.1.14]{IP}). Thus, we may assume that $\ell \leq n-2$.

\begin{proposition}
Let $X \subset \P^r$ be a non-degenerate prime Fano variety of dimension $n$, codimension $e$, and degree $d$ with $-K_X = \ell H$. If $e+1 \leq n \leq 2e$, $d \leq r+1$, and $\ell \leq n-2$, then $\ell = n-2$.
\end{proposition}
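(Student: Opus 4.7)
The plan is a proof by contradiction: assume $\ell \leq n-3$ and extract a numerical obstruction from the sectional genus. Let $C \subset \P^{e+1}$ be a general curve section of $X$; by iterated Bertini, $C$ is a smooth irreducible non-degenerate curve of degree $d$, and its genus $g$ is the sectional genus of $(X,H)$. The adjunction formula gives
$$2g - 2 \;=\; \bigl(K_X + (n-1)H\bigr)\cdot H^{n-1} \;=\; (n-1-\ell)\,d,$$
so the assumption $\ell \leq n-3$ forces $n-1-\ell \geq 2$ and hence $g \geq d+1$.

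Next I would pit this lower bound against the embedding geometry of $C$. Non-degeneracy gives $h^0(\mathcal{O}_C(1)) \geq e+2$. If $\mathcal{O}_C(1)$ were non-special, Riemann--Roch would yield $h^0(\mathcal{O}_C(1)) = d+1-g \leq 0$, an immediate contradiction. So $\mathcal{O}_C(1)$ is special, and Clifford's inequality gives $e+2 \leq h^0(\mathcal{O}_C(1)) \leq d/2 + 1$, whence $d \geq 2e+2$. Combined with the hypothesis $d \leq r+1 = n+e+1 \leq 3e+1$ (using $n \leq 2e$), the degree is confined to the interval $d \in [2e+2,\,3e+1]$.

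The last step is Castelnuovo's bound $g \leq \pi(d,e+1) = \binom{m}{2}e + m\epsilon$, where $d-1 = me + \epsilon$ with $0 \leq \epsilon \leq e-1$. In the corridor $d \in [2e+2, 3e+1]$ only two cases arise: $m=2$ for $d \in [2e+2, 3e]$, giving $\pi(d,e+1) = 2d-3e-2$; and $m=3$, $\epsilon = 0$ for $d = 3e+1$, giving $\pi(d,e+1) = 3e$. In both cases $\pi(d,e+1) < d+1$ (the first reduces to $d < 3e+3$, the second to $3e < 3e+2$), contradicting $g \geq d+1$. Therefore $\ell \geq n-2$, and combined with the standing hypothesis $\ell \leq n-2$ this forces $\ell = n-2$.

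I do not foresee a serious obstacle: once the sectional genus formula is coupled with Clifford and Castelnuovo, the argument is essentially mechanical. The only subtlety is recognizing that the two constraints $e+1 \leq n \leq 2e$ and $d \leq r+1$ jointly trap $d$ inside precisely the window where the Castelnuovo bound is strictly weaker than the lower bound $g \geq d+1$ produced by the assumption $\ell \leq n-3$.
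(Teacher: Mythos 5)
Your proof is correct, and its engine is the same as the paper's: the adjunction identity $2g-2=(n-1-\ell)d$ for the sectional genus played off against Castelnuovo's bound $g\leq\binom{m}{2}e+m\epsilon$. The genuine difference is in how the degree range is handled. The paper first invokes \cite[Theorem A, Theorem B, and Proposition 10]{Io4} to reduce to $d=r+1$ and then splits into the two cases $n=2e$ and $n\leq 2e-1$ (equivalently $m=3$ and $m=2$); you instead keep all $d\leq r+1$ in play, observe that the assumption $\ell\leq n-3$ forces $g\geq d+1$, use Riemann--Roch and Clifford to conclude that $\mathcal{O}_C(1)$ is special and $d\geq 2e+2$, and then run Castelnuovo uniformly on the window $[2e+2,3e+1]$. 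This makes your argument self-contained where the paper leans on an external classification, at essentially no cost; indeed the Clifford step could even be omitted, since for $d\leq 2e+1$ one has $m\leq 2$ and $\epsilon\leq 0$, so $\pi(d,e+1)\leq\max(e,\,d-e-1)<d+1$ and the same contradiction appears. The numerical verifications ($2d-3e-2<d+1$ for $m=2$ and $3e<3e+2$ for $m=3$) match the paper's inequalities after substituting $d=n+e+1$, so the two proofs are otherwise the same computation in different coordinates.
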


\begin{proof}
By \cite[Theorem A, Theorem B, and Proposition 10]{Io4}, we only have to consider the case $d=r+1$. By the adjunction formula, we have
\begin{equation}\label{2g-2}
2g-2 = (n- \ell -1)d,
\end{equation}
where $g$ is the sectional genus of $X$. Recall the Castelnuovo's Bound:
\begin{equation}\label{casb}
g \leq \frac{m(m-1)}{2}e + m \epsilon,
\end{equation}
where $m = \lfloor \frac{d-1}{e} \rfloor$ and $\epsilon = d - me -1$. Suppose that $n=2e$ ($d=3e+1$). Then, $m=3$ and $\epsilon =0$, and hence, by (\ref{casb}), we get $g \leq 3e$. By (\ref{2g-2}), we have $(n- \ell - 1)(3e+1) \leq  6e -2$, so we obtain $n - \ell - 1 \leq 1$, i.e., $\ell \geq n-2$. Now, suppose that $n \leq 2e-1$ ($d=n+e+1$). Then, $m=2$ and $\epsilon = n-e$, and hence, by (\ref{casb}), we get $g \leq 2n-e$. By (\ref{2g-2}), we have $(n - \ell - 1)(n+e+1) \leq 4n - 2e -2$. If $n - \ell -1 \geq 2$, then $2e + 2 \leq n$, which is a contradiction. Thus, $\ell \geq n-2$.
\end{proof}

For reader's convenience, we give the complete list of Mukai varieties (i.e., Fano varieties with $-K_X = (n-2)H$ and $\Pic(X)=\Z[H]$), which are completely classified by Mukai (\cite{M}; see also \cite[Theorem 5.2.3]{IP}), of degree $d \leq r+1$.

\begin{theorem}[Mukai]
Let $X \subset \P^r$ be a non-degenerate Mukai variety of dimension $n$ and degree $d$. If $d \leq r+1$, then $X$ satisfies one of the following:
\begin{enumerate}[\indent$(a)$]
 \item a complete intersection of type $(2,3)$ and $n \geq 3$ or of type $(2,2,2)$ and $n \geq 4$;
 \item $n=5,6, d=10$, and the intersection $C \cap Q \subset \P^{10}$, where $C \subset \P^{10}$ is the cone over the Pl\"{u}cker embedding of the Grassmannian $Gr(2,5) \subset \P^9$ and $Q \subset \P^{10}$ is a quadric hypersurface, or its hyperplane section;
 \item $6 \leq n \leq 10, d=12$, and the spinor variety $X \subset \P^{15}$ or its linear sections;
 \item $n=7,8, d=14$, and the Pl\"{u}cker embedding of the Grassmannian $Gr(2,6) \subset \P^{14}$ or its hyperplane section.
\end{enumerate}
\end{theorem}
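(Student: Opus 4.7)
The proof amounts to specializing Mukai's full classification of prime Fano varieties with $-K_X = (n-2)H$ (\cite{M}; cf.\ \cite[Theorem 5.2.3]{IP}) to the small-degree range $d \leq r+1$. The strategy has two steps: translate the degree bound into a numerical constraint on the Mukai genus, then filter Mukai's list by this constraint.

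First I would express $d \leq r+1$ in terms of the sectional genus $g$. Since $-K_X = (n-2)H$ yields $K_X + (n-1)H \sim H$, applying adjunction to a general curve section $C \subset X$ gives
$$2g - 2 \;=\; (K_X + (n-1)H) \cdot H^{n-1} \;=\; H^n \;=\; d,$$
so $g = d/2 + 1$. Because $\Pic(X) = \Z[H]$, the embedding $X \subset \P^r$ is automatically linearly normal, and standard Riemann-Roch together with Kodaira vanishing (applicable since $H - K_X = (n-1)H$ is ample) yields $h^0(X, \mathcal{O}_X(H)) = g + n - 1$, so $r = g + n - 2$. Combining these two identities, $d - (r+1) = g - n - 1$, and hence the hypothesis $d \leq r+1$ is equivalent to $g \leq n+1$.

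Next I would run Mukai's classification through this filter. Mukai's theorem asserts that every prime Fano variety of coindex $3$ belongs to one of finitely many deformation types indexed by the Mukai genus $g$, each with a specific maximum dimension $n_{\max}(g)$, with lower-dimensional members of the same family arising as transverse linear sections. Imposing $g \leq n+1$ forces the following cases: genus $4$ with arbitrary $n \geq 3$ gives complete intersections of type $(2,3)$; genus $5$ with $n \geq 4$ gives complete intersections of type $(2,2,2)$; genus $6$, where $n_{\max} = 6$, gives $n \in \{5,6\}$ with $X$ the quadric section of the cone over $Gr(2,5) \subset \P^9$ (or its hyperplane section); genus $7$, where $n_{\max} = 10$, gives $n \in \{6,\ldots,10\}$ with $X$ a linear section of the spinor tenfold in $\P^{15}$ of codimension at most $4$; genus $8$, where $n_{\max} = 8$, gives $n \in \{7,8\}$ with $X$ the Pl\"{u}cker embedded $Gr(2,6) \subset \P^{14}$ or its hyperplane section; and for $g \in \{9, 10, 12\}$ one has $n_{\max}(g) \leq g - 2$, so the bound $g \leq n+1$ is never attained. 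These are precisely the families (a)-(d).

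There is no real conceptual obstacle, since the substance of the theorem is Mukai's classification, which is being quoted. The care required is purely bookkeeping: verifying the adjunction and linear normality computations that yield the equivalence $d \leq r+1 \Longleftrightarrow g \leq n+1$, and correctly extracting the maximum dimensions $n_{\max}(g)$ from Mukai's structural descriptions of the homogeneous models $Gr(2,5)$, $Gr(2,6)$ and the spinor variety, together with the Bertini-type statements ensuring that a transverse linear section of a Mukai variety of genus $g$ is again a Mukai variety of the same genus and with Picard group still generated by the hyperplane class.
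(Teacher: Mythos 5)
Your proposal takes essentially the same route as the paper, which in fact offers no proof at all: the theorem is stated ``for reader's convenience'' as a direct quotation of Mukai's classification (\cite{M}, \cite[Theorem 5.2.3]{IP}), filtered by the degree bound exactly as you do via the equivalence $d\leq r+1\Leftrightarrow g\leq n+1$, and your genus-by-genus bookkeeping (including the maximal dimensions $6$, $10$, $8$ for genera $6$, $7$, $8$ and the exclusion of genera $9$, $10$, $12$) is correct. Two small caveats: your filter as stated also admits genus $3$, i.e.\ the quartic hypersurface $X_4\subset\P^{n+1}$, which is an embedded Mukai variety with $d=4\leq r+1$ yet appears neither in your enumeration nor in the paper's list (in context the theorem is only applied in codimension $e\geq 2$, which excludes it); and $\Pic(X)=\Z[H]$ does not by itself force linear normality, though the implication you actually need, $d\leq r+1\leq h^0(H)=g+n-1$, holds regardless.
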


\section{Smooth projective varieties of degree at most $r+2$}\label{r+2s}

In this section, we consider the classification of smooth projective varieties $X \subset \P^r$ of degree $d \leq r+2$. We first prove the following theorem.

\begin{theorem}\label{r+2}
Let $X \subset \P^r$ be an $n$-dimensional non-degenerate smooth projective variety of degree $d \leq r+2$. Then, one of the following holds:
\begin{enumerate}[\indent$(a)$]
 \item $X$ is rationally connected. $($If $e \geq n+1 \geq 4$, then $-K_X$ is big.$)$
 \item $X$ is a Roth variety but not a hypersurface or a rational scroll.
 \item $X$ is a Calabi-Yau hypersurface with $n \geq 2$ $(d=r+1)$ or a K3 surface in $\P^4$ $(d=6)$.
 \item $X$ is a hypersurface of general type with $n \geq 2$ $(d=r+2)$.
 \item $X$ is a curve of genus $g =1,2$, a plane quartic curve, or an elliptic scroll.
\end{enumerate}
In particular, $X$ is simply connected if and only if it is from $(a)$, $(b)$, $(c)$, or $(d)$, and $X$ is rationally connected if and only if it is from $(a)$ or $(b)$.
\end{theorem}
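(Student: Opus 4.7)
The approach is to extend the proof of Theorem \ref{main} to the range $d = r+2$, keeping the same overall skeleton. Since Theorem \ref{main} already handles $d \leq r+1$---its cases (a)--(d) embed into (a), (b), (c), (e) here, using that weak Fano implies rationally connected---I reduce to $d = r+2$. Four special subclasses can be disposed of directly. Curves ($n=1$) fall into (a) or (e) by Castelnuovo's bound ($g \leq 2$ for $r \geq 3$; plane quartic for $r=2, d=4$). Hypersurfaces ($e=1$) satisfy $K_X = H|_X$ by adjunction, so $X$ is of general type (case (d)), and Barth--Larsen gives simple connectedness. If $X$ is a scroll, Theorem \ref{minsc} places it in (a) or (e). If $X$ is a Roth variety, Proposition \ref{rocl} enumerates the possibilities while Proposition \ref{roraco} confirms that each non-hypersurface, non-rational-scroll Roth variety appearing here has $b \leq n$ and is therefore rationally connected (case (b)).

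The main new work is the remaining case: $n, e \geq 2$ with $X$ neither a scroll, nor a Roth variety, nor the Veronese surface in $\P^5$ (the last excluded automatically by $\deg v_2(\P^2) = 4 \neq 7 = r+2$). Theorem \ref{nomasa} then yields that $D_{inn} = -K_X + H$ is semiample, and Theorem \ref{ne+2} applies: the inequality $d = n+e+2 \leq ne+2$ holds for all $n, e \geq 2$ with equality precisely when $n = e = 2$, so $X$ is uniruled unless it is Calabi--Yau with $d = 6$, i.e., a K3 surface in $\P^4$ (case (c)). To promote uniruledness to rational connectedness I invoke adjunction theory: if $K_X + (n-1)H$ is not base-point-free, then by \cite[Theorem 1.4]{Io1} $X$ is either a prime Fano variety (rationally connected) or a scroll (already excluded); otherwise the adjunction map $\varphi : X \to B$ is defined and I analyze $\dim B$ via Proposition \ref{baadjth}. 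The case $\dim B = 0$ yields a del Pezzo variety. For $\dim B \in \{1,2\}$ I extend Lemmas \ref{hqf} and \ref{lf} to $d = r+2$, using the analogue of Lemma \ref{secg} which now gives only $g \leq n+1$, combined with Fujita's classification of scrolls, to force the base to be $\P^1$ or a rational surface, so that $X$ is rationally connected.

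The main obstacle will be the case $\dim B = n$ (equivalently, $K_X + (n-1)H$ big) inside the uniruled subcase, since the Siu-type computation of Theorem \ref{main} that derived bigness of $-K_X$ no longer closes unconditionally at $d = r+2$. I expect to resolve this by splitting on codimension: for $e \leq n-2$, Barth--Larsen forces $\Pic(X) = \Z[H]$, so uniruledness from Theorem \ref{ne+2} pins $X$ down as a Fano variety; for $e \geq n$ one has $d \leq 2e+2$ and Ionescu's classification \cite[Theorem I]{Io2} rules out the $\dim B = n$ case; the boundary $e = n-1, d = 2n+1$ is handled by inspection of Ionescu's tables together with a direct adjunction analysis. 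The parenthetical claim that $-K_X$ is big when $e \geq n+1 \geq 4$ follows from the same Siu-type computation as in Theorem \ref{main}, with $g \leq n+1$ replacing the old $g \leq n$; the resulting numerical condition $n^2 - (e+1)n + (e+2) < 0$ holds throughout the stated range. Finally, the equivalences on $\pi_1$ and rational connectedness are immediate from the case analysis, combining Barth--Larsen and Lefschetz for (c) and (d), Proposition \ref{rosim} for (b), and the well-known nontrivial fundamental groups of the varieties in (e).
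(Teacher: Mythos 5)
Your skeleton matches the paper's: reduce to $d=r+2$ via Theorem \ref{main}, dispose of curves, hypersurfaces, scrolls and Roth varieties, apply Theorems \ref{nomasa} and \ref{ne+2} to get uniruledness or the K3 surface in $\P^4$, then upgrade uniruledness to rational connectedness through adjunction theory. But there is a genuine gap in the upgrade step when $e \geq n$. To force the base of a hyperquadric fibration to be $\P^1$ (via \cite[Lemma 6]{Io4}, which identifies the genus of the base curve with $q(X)$) and the base of a linear fibration to be a rational surface, you need $h^1(\mathcal{O}_X)=0$. For $d\leq r+1$ this regularity came for free from Theorem \ref{main} --- that is exactly how Lemmas \ref{hqf} and \ref{lf} are proved --- but for $d=r+2$ it is the central new difficulty, and your proposal never addresses it: a uniruled variety can perfectly well carry a hyperquadric fibration over an elliptic curve, which is neither rationally connected nor simply connected, and nothing in your outline excludes this. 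The paper's proof spends most of its effort here: it first shows $-K_X+H$ is nef and big unless the general surface section is a Castelnuovo (hence K3) surface, using the sectional genus bounds $g\leq n+2$ for $n=e$ and $g\leq n+1$ for $n\leq e-1$ together with the bigness criterion applied to $D=nH$ and $E=K_X+(n-1)H$, and then runs an induction on $n$ (base case $n=2$ by case analysis on $g\leq 3$) using Kawamata--Viehweg vanishing and the linear normality of $X$ to conclude $h^1(\mathcal{O}_X)=0$. Some version of this regularity argument must be supplied before Lemmas \ref{hqf} and \ref{lf} can be ``extended'' as you claim.

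A smaller but concrete error: your assertion that $n^2-(e+1)n+(e+2)<0$ holds throughout $e\geq n+1\geq 4$ fails at $(n,e)=(3,4)$, where the expression equals $0$ (this is the case $d=9$, $g=4$, where the inequality of Lemma \ref{ineq} degenerates to an equality). The paper treats this exceptional case separately by appealing to the classification of degree-nine manifolds in \cite{FL}; without that, the parenthetical bigness claim in $(a)$ is not established.
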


\begin{proof}
By Theorem \ref{main}, we may assume that $X$ is linearly normal and $d=r+2$. 
When $X$ is a curve, the assertion is trivial. Thus assume that $n \geq 2$.
If $X$ is a scroll over a curve of genus $g$, then the assertion follows from Proposition \ref{minsc}.
Let $e := r-n$ be the codimension of $X \subset \P^r$, and let $H$ be a general hyperplane section. If $e=1$, then there is nothing to prove. Now, assume that $e \geq 2$ and $X$ is neither a scroll nor a Roth variety. By Theorem \ref{nomasa}, $D_{inn} = -K_X + H$ is semiample. If $e \leq n-2$, then by the Barth-Larsen Theorem, $X$ must be a Fano variety or a Calabi-Yau variety. If $n=e=2$ ($d=6$), then we can verify the assertion by the classification result (see \cite[table in Introduction]{Io1}). We note that there exists a K3 surface $S \subset \P^4$ of degree 6. Assume that $e \geq n-1$. Since $d=n+e+2 \leq en+1$ unless $e=n=2$, it follows that $X$ is uniruled by Theorem \ref{ne+2}. On the other hand, if $n \geq 3$ and $e \leq n-2$, then $X$ is also uniruled. Since uniruled varieties cannot be Calabi-Yau, we conclude that $X$ is uniruled or from $(c)$. 

By Corollary \ref{rora}, varieties from (b) are rational.
Furthermore, note that every variety from $(c)$ is simply connected. In addition, by the Lefschetz Hyperplane Theorem (\cite[Theorem 3.1.21]{posI}), varieties from $(d)$ are also simply connected.

Now, suppose that $X$ is not from $(b)$, $(c)$, $(d)$, or $(e)$. We prove that $X$ is rationally connected, and hence, it is simply connected. If $e \leq n-2$, then we already saw that $X$ is Fano, and hence, it is rationally connected. If $e=n-1$, then $X$ is simply connected by the Barth-Larsen Theorem (\cite[Corollary 3.2.2]{posI}). In this case, $d=2n+1$, and $\Delta(X, H)=n+1$. The case $n=3$ can be directly checked by the classification result (see \cite[table in Introduction]{Io1}). If $n \geq 4$, then $d > \frac{3}{2}\Delta(X,H)+1$. By \cite[Corollary 4.5]{Fu}, we only have to show that if $X$ has a hyperquadric fibration over a curve $C$ or it has a linear fibration over a surface $S$ with $h^0(\mathcal{O}_S(K_S))=0$, then $X$ is rationally connected. In the first case, $C$ is rational by \cite[Lemma 6]{Io4}. In the second case, $S$ is also rational by \cite[Proposition 4]{IT1}. Thus, in both cases, $X$ is rational.

We assume that $e \geq n$. First, we show that $h^1(\mathcal{O}_X)=0$. We claim that $-K_X + H$ is big except when a general surface section of $X$ is a K3 surface, which is simply connected. In the exceptional case, $X$ is simply connected by the Lefschetz Hyperplane Theorem.
Let $g$ be the sectional genus of $X \subset \P^r$. By the Castelnuovo's Bound for sectional genus, we have
$g \leq n+2$ for $n=e$, and $g \leq n+1$ for $n \leq e-1$. In the first case, we have $d=2e+2$. If $g=e+2$, then the curve section of $X$ is a Castelnuovo curve, and hence, by \cite[p.67]{Hr} or \cite[Proposition 3.13]{Il}, the surface section of $X$ is also a Castelnuovo surface (see Definition \ref{casvar}). By Lemma \ref{mincas}, this surface is K3. Thus, we can assume that $g \leq n+1$ so that $2g-2 \leq 2n < n+e+2=d$. On the other hand, let $D:=nH$ and $E:=K_X + (n-1)H$. By \cite[Theorem 1.4]{Io1}, we may assume that $E$ is nef. We have
$$
nE.D^{n-1} = n^n (K_X + (n-1)H).H^{n-1} = n^n (2g-2) < n^n d = D^n.
$$
By \cite[Theorem 2.2.15]{posI}, $D-E=-K_X + H$ is big.

We proceed the induction on $n$ to show that $h^1(\mathcal{O}_X)=0$ provided that $X$ is not a scroll over a curve. The case $n=2$ can be done by case-by-case analysis as follows. We already showed the assertion when $n=e=2$. When $e \geq 3$, we have $g \leq 3$. Then, the assertion follows from \cite[Theorem A]{Io4} for $g=0$, \cite[Proposition 2.6]{Io1} for $g=1$, \cite[Proposition 3.1]{Io1} for $g=2$, and \cite[Theorem 4.1]{Io1} for $g=3$. Suppose that $n \geq 3$. By the Bertini Theorem, $H \subset \P^{r-1}$ is a non-degenerate smooth projective variety of degree $d = (r-1)+3$. Note that $H$ should not be a scroll over a curve. We have the exact sequence
\begin{equation}\label{exact}
0 \rightarrow \mathcal{O}_X (-H) \rightarrow \mathcal{O}_X \rightarrow \mathcal{O}_H \rightarrow 0.
\end{equation}
Note that $-K_X + H$ is nef and big and $H = K_X + (-K_X + H)$, and hence, by Kawamata-Viehweg Vanishing, $h^1(\mathcal{O}_X(H))=0$. From (\ref{exact}) with twisting $\mathcal{O}_X(H)$, $h^1(\mathcal{O}_X)=0$ if and only if the natural map $H^0(\mathcal{O}_X(1)) \rightarrow H^0(\mathcal{O}_H(1))$ is surjective. Suppose that $H^0(\mathcal{O}_X(1)) \rightarrow H^0(\mathcal{O}_H(1))$ is not surjective. Since $X \subset \P^r$ is linearly normal, $H \subset \P^{r-1}$ must not be linearly normal. Then, $H \subset \P^{r-1}$ can be obtained by an isomorphic projection from $H \subset \P^r$ with degree of $H$ is $r+2$. By induction hypothesis, $H$ is regular. By Kodaira Vanishing, $h^1(\mathcal{O}_X(-H))=h^2(\mathcal{O}_X(-H))=0$. Thus, from (\ref{exact}), we have $h^1(\mathcal{O}_X)=h^1(\mathcal{O}_H)=0$.

We still assume that $X$ is not from $(b)$, $(c)$, $(d)$, or $(e)$, and $e \geq n$. Then, we have $d = n+e+2 \leq 2e+2$. By \cite[Theorem I]{Io2}, we only have to consider the case that the adjunction mappings of $X$ induce hyperquadric fibration over a curve $C$ or linear fibrations over a ruled surface $S$. In the first case, by \cite[Lemma 6]{Io4}, $C$ is a rational curve. In the second case, $h^1(\mathcal{O}_X)=h^1(\mathcal{O}_S)=0$, and hence, $S$ is a rational surface. Thus, in both cases, $X$ is rational.

It remains to show that $-K_X$ is big when $e \geq n+1 \geq 4$. Recall that $g$ is the sectional genus of $X$. We proved that $n+1 \geq g$, which implies that $d \geq 2g+1$. As in Lemma \ref{ineq}, it follows that $d + \frac{n}{n-1}\{(n-1)d - 2g +2\} - nd >0$, and hence, we can conclude that $-K_X$ is big by \cite[Theorem 2.2.15]{posI} except when $n=3, e=4$ and $g=4$ ($d=9$). By the classification of degree nine varieties (see \cite[Proposition 3.1]{FL}), we can easily check that $-K_X$ is also big for the exceptional case.
\end{proof}

By the same arguments in Section \ref{adjs}, we can prove the analogous statement to Theorem \ref{adj}. There are some exceptional cases for $n \leq 4$ which can be completely classified. We leave the proof to the interested reader.

\begin{theorem}\label{r+2adj}
Let $X \subset \P^r$ be an $n$-dimensional non-degenerate smooth projective variety of degree $d = r+2$, and let $H$ be a general hyperplane section. Assume that $n \geq 5$ and $X$ is from the case $(a)$ in Theorem \ref{r+2}. Then, one of the following holds:
\begin{enumerate}[\indent$(a)$]
 \item $X$ is prime Fano, i.e., $-K_X = \ell H$ for some $\ell >0$ and $\Pic(X) = \Z[H]$.
 \item $|K_X + (n-1)H|$ induces a hyperquadric fibration over $\P^1$.
 \item $|K_X + (n-1)H|$ induces a linear fibration over a smooth del Pezzo surface.
\end{enumerate}
In particular, if $X$ is not a prime Fano, then it is a rational variety.
\end{theorem}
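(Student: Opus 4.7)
The plan is to imitate the proof of Theorem \ref{adj}, feeding in the conclusions of Theorem \ref{r+2}(a) in place of Theorem \ref{main}. Under our hypotheses ($n \geq 5$ and $X$ rationally connected in the sense of Theorem \ref{r+2}(a)) we have $q(X) = h^1(\mathcal{O}_X) = 0$, $X$ is simply connected, and the Noma divisor $-K_X + H = D_{inn}$ is nef and big (the K3 exception appearing in the proof of Theorem \ref{r+2} is a surface phenomenon and does not affect us). I would then split the analysis according to the codimension $e = r - n$, exactly as in Theorem \ref{adj}.

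For $e \leq n - 2$ the Barth--Larsen Theorem yields $\Pic(X) = \Z \cdot H$; writing $-K_X = \ell H$, rational connectedness forces $\ell > 0$, placing $X$ in case (a). For $e \geq n - 1$, I would invoke \cite[Theorem 1.4]{Io1}: either $X$ is prime Fano, or $K_X + (n-1)H$ is base point free and defines $\varphi\colon X \to B$. Proposition \ref{baadjth} leaves four possibilities for $\varphi$. The del Pezzo subcase $\dim B = 0$ is handled by Theorem \ref{delpez}: for $n \geq 5$, every entry on Fujita's list has $\Pic = \Z$ and hence reduces to (a). The hyperquadric fibration subcase yields $B \simeq \P^1$ by Lemma \ref{hqf}, whose proof only uses $q(X) = 0$. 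The subcase $\dim B = n$ is ruled out by combining the sectional genus bound $g \leq n+1$ (Castelnuovo's bound applied to $d = r+2$) with $(K_X + (n-1)H) \cdot H^{n-1} = 2g - 2$ and the bigness of $-K_X + H$, exactly as in the closing numerical argument of Theorem \ref{main}. For $e \geq n$ (so $d \leq 2e + 2$), \cite[Theorem I]{Io2} pins the adjunction picture down to the three surviving types; the borderline case $e = n - 1$ (where $d = 2n + 1 = 2e + 3$) sits just outside Ionescu's range, and I would handle it by the Bordiga-style numerical analysis of Lemma \ref{bord}, aided by the inequality $d > \tfrac{3}{2}\Delta(X, H) + 1$ that holds once $n \geq 5$.

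The main obstacle is the linear fibration subcase, where one must promote the base $S$ from merely ``rational surface'' (as in Lemma \ref{lf}) to ``smooth del Pezzo surface''. I would rerun the argument of Lemma \ref{lf} with the relaxed sectional genus bound $g \leq n + 1$ in place of $g \leq n$, so that $Y := \varphi^{-1}(H_S)$ for a general hyperplane section $H_S \subset S$ is an $(n-1)$-dimensional scroll of degree $d_Y = 2g - 2 \leq 2n$. Lemma \ref{scrcod} and Theorem \ref{minsc} still force $Y$ to be a rational scroll, and the surface-section computation $H_{S'} \cdot K_{S'} < 0$ still forces $h^2(\mathcal{O}_S) = 0$, so $S$ is rational. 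To upgrade this to del Pezzo, I would use that the fibers of $\varphi$ are linear subspaces of $\P^r$: the positivity of $-K_X + H$ restricted to the relative tangent sheaf combined with the adjunction formula on $S$ gives $-K_S \cdot C > 0$ for every irreducible curve $C \subset S$, ruling out $(-2)$-curves and forcing $-K_S$ to be ample. Finally, the ``In particular'' rationality statement is automatic: hyperquadric bundles over $\P^1$ of relative dimension $\geq 2$ are rational, and projective bundles over del Pezzo surfaces are rational.
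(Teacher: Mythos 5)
Your overall architecture --- reducing to the case division of Theorem \ref{adj}, absorbing the $\dim B=0$ case into prime Fano for $n\geq 5$, using Barth--Larsen for $e\leq n-2$ and \cite[Theorem I]{Io2} for $e\geq n$ --- is exactly the route the paper indicates (``by the same arguments in Section \ref{adjs}''). But the linear-fibration case, the only place where the conclusion genuinely differs from Theorem \ref{adj}, contains a real error. You assert that Lemma \ref{scrcod} and Theorem \ref{minsc} ``still force $Y=\varphi^{-1}(H_S)$ to be a rational scroll'' under the relaxed bound $g\leq n+1$. They do not. When $g=n+1$ the scroll $Y$ has dimension $n-1$ and degree $d_Y=2g-2=2n$, and an elliptic scroll of dimension $n-1$ spanning a $\P^m$ with $m=2n-2$ or $2n-1$ and degree $2n\geq m+1$ is perfectly consistent with Propositions \ref{scrdeg} and \ref{screll} and Lemma \ref{scrcod} (compare Remark \ref{exisc}: elliptic normal scrolls of dimension $n-1$ and degree $2n$ in $\P^{2n-1}$ exist). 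The contradiction used in Lemma \ref{lf}, namely $m\geq 2n-2\geq 2g-2=d_Y$, evaporates precisely when $g=n+1$. This surviving case is not a nuisance to be argued away: it is the reason the theorem says ``del Pezzo surface'' rather than ``$\P^2$ or $\P^1\times\P^1$'' (compare the Palatini threefold, fibered over a cubic surface). The correct continuation is: the sectional genus of $(S,H_S)$ is $0$ or $1$; if $0$, Fujita's classification plus the count $2g-2=d_0+bd_1\geq (b+1)(n-1)$ (which for $n\geq 5$ forces $b=1$, $a=0$) gives $S\cong\P^2$ or $\P^1\times\P^1$; if $1$, then since $q(S)=q(X)=0$ excludes elliptic ruled surfaces, the classification of polarized surfaces of sectional genus one shows $S$ is a del Pezzo surface embedded by $-K_S$. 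Your substitute --- deducing $-K_S\cdot C>0$ from ``positivity of $-K_X+H$ restricted to the relative tangent sheaf'' --- is not carried out, does not obviously follow (nefness of $-K_X+H$ paired with the effective $1$-cycle $\xi^{n-2}\cdot\varphi^{*}C$ only yields $-K_S\cdot C\geq -(n-1)\deg(E|_C)$, which is vacuous), and even if established would not give ampleness without also controlling $K_S^2$.

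Two smaller points. First, dismissing the K3 exception as ``a surface phenomenon'' is wrong: for $e=n$ the sectional genus can reach $n+2$, in which case the general surface section of $X$ is a K3 surface and $-K_X+H$ need not be big; this happens for genuine case-$(a)$ varieties of dimension $\geq 5$ (e.g.\ the $5$-dimensional linear section of the spinor variety, of degree $12=r+2$ in $\P^{10}$), and such $X$ must be routed to conclusion $(a)$ by a Mukai-type argument rather than fed into the numerical bigness computation. Second, bigness of $-K_X+H$ does not rule out $\dim B=n$ --- the two are compatible, and $\dim B=n$ actually occurs for the spinor section just mentioned; what controls the adjunction map is \cite[Theorem I]{Io2} for $e\geq n$ and \cite[Corollary 4.5]{Fu} for $e=n-1$, which you do invoke, so this misstep is repairable but should not be presented as the mechanism.
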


Finally, we give examples of varieties from $(a)$ in Theorem \ref{r+2}.

\begin{example}
(1) Let $\pi \colon S \rightarrow \P^2$ be the blow-up at 9 points in general position, and let $E_1, \ldots, E_{9}$ be the exceptional divisors. Then, the very ample divisor $H := \pi^{*}(4L) - E_1 - \cdots - E_9$ gives an embedding $S \subset \P^5$ with $\deg(S)=7$. where $L$ is a line in $\P^2$. Note that $-K_S$ is not big. Moreover, the Cox ring of $S$ is not finitely generated because there are infinitely many ($-1$)-curves.\\[1pt]
(2) Let $X \subset \P^5$ be a \emph{Palatini threefold} (see e.g., \cite{Ot}). Then, $\deg(X)=7$ and $X$ has a linear fibration over a cubic surface in $\P^3$.\\[1pt]
(3) There is a threefold $X \subset \P^5$ of degree $7$ such that $|K_X + H|$ induces a cubic hypersurface fibration over $\P^1$ (see \cite[table in Introduction]{Io1}).
\end{example}

\section{Smooth projective varieties of degree at most $ne+2$}\label{CY}

In this section, we prove Theorem \ref{ne+2}.
For this purpose, we need to classify Castelnuovo varieties of minimal degree. See \cite{Hr} for more detail on (possibly singular) Castelnuovo varieties.

\begin{definition}[{\cite[p.44]{Hr}}]\label{casvar}
Let $X \subset \P^r$ be a smooth projective variety of dimension $n$, codimension $e$, and degree $d$. Then, $X$ is called a \emph{Castelnuovo variety} if $d \geq ne+2$ and
$$
p_g(X)=h^0(\mathcal{O}_X(K_X))={m \choose n+1}e + {m \choose n} \epsilon,
$$
where $m = \lfloor \frac{d-1}{e} \rfloor$ and $\epsilon = d - me -1$.
\end{definition}

\begin{lemma}\label{mincas}
Let $X \subset \P^r$ be a non-degenerate smooth Castelnuovo variety of dimension $n$, codimension $e$, and degree $d$. If $d=ne+2$, then $X$ is Calabi-Yau.
\end{lemma}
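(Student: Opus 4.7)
The plan is to verify the two defining properties of a Calabi--Yau variety: $\mathcal{O}_X(K_X)\simeq\mathcal{O}_X$ and $h^i(\mathcal{O}_X)=0$ for $0<i<n$. The case $e=1$ is immediate, for then $X\subset\P^{n+1}$ is a hypersurface of degree $n+2$, so adjunction gives $K_X\sim 0$ and the Lefschetz hyperplane theorem supplies the intermediate cohomology vanishings; so assume $e\ge 2$ from now on. Substituting $d=ne+2$ into the Castelnuovo formula yields $m=n$ and $\epsilon=1$, and hence
\[
p_g(X)=\binom{n}{n+1}e+\binom{n}{n}\cdot 1=1.
\]
Consequently $|K_X|$ consists of a single nonzero effective divisor $D$, and the goal becomes to show $D=0$.

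Second, I would compute $K_X\cdot H^{n-1}$ by passing to a general curve section $C\subset\P^{e+1}$. The inductive structure in Harris's proof of the Castelnuovo bound on $p_g$ forces equality to descend to hyperplane sections at every stage, so $C$ attains the classical Castelnuovo genus bound: $g(C)=\binom{n}{2}e+n$ for a curve of degree $ne+2$ in $\P^{e+1}$. Adjunction along $X$ gives $2g(C)-2=K_X\cdot H^{n-1}+(n-1)d$, which collapses to
\[
K_X\cdot H^{n-1}=n(n-1)e+2(n-1)-(n-1)(ne+2)=0.
\]
Since $D$ is effective and $H$ is very ample, each irreducible component of $D$ has strictly positive intersection with the complete-intersection curve $H^{n-1}$; the identity $D\cdot H^{n-1}=0$ therefore forces $D=0$, so $\mathcal{O}_X(K_X)\simeq\mathcal{O}_X$.

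Third, and this is the main obstacle, I need $h^i(\mathcal{O}_X)=0$ for $0<i<n$. Here I would invoke the structural fact (due to Harris for curves, with higher-dimensional extensions via Ciliberto and Zak) that a Castelnuovo variety attaining the $p_g$ bound is arithmetically Cohen--Macaulay; this at once gives $h^i(\mathcal{O}_X)=0$ for $0<i<n$. Equivalently, one can use the classification of Castelnuovo varieties in terms of divisors on varieties of minimal degree: produce an $(n+1)$-dimensional $Y\subset\P^r$ of minimal degree $e$ (a rational normal scroll, or a cone over one) containing $X$, verify that $X\in|-K_Y|$ from the scroll formula $K_Y=-(n+1)H+(e-2)F$ combined with the now-established triviality of $K_X$, and extract the intermediate cohomology of $\mathcal{O}_X$ from the exact sequence
\[
0\to\omega_Y\to\mathcal{O}_Y\to\mathcal{O}_X\to 0
\]
together with the vanishings $h^i(\mathcal{O}_Y)=0$ for $i>0$ (on a resolution in the cone case) and Serre duality on $Y$. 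The delicate point is justifying the existence of this containing minimal-degree $Y$ and handling the non-smooth cases uniformly; once that is in place, the remaining cohomological bookkeeping is routine and completes the proof.
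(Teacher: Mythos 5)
Your proof is correct, but the way you establish $\mathcal{O}_X(K_X)\simeq\mathcal{O}_X$ is genuinely different from the paper's. The paper first quotes the arithmetically Cohen--Macaulay property of Castelnuovo varieties to get $h^i(\mathcal{O}_X)=0$ for $0<i<n$ (exactly as you do), but then invokes Harris's classification: $X$ is a divisor on a rational normal scroll $S$ (in which case $X\in|-K_S|$ when $d=ne+2$, so adjunction gives $K_X\sim 0$), or a complete intersection of type $(2,n+1)$, or a divisor on a cone over the Veronese surface --- the last case being excluded because $d=4n+2$ is not divisible by $4$. You instead compute $p_g(X)=\binom{n}{n+1}e+\binom{n}{n}=1$ directly from the definition, show the sectional genus attains the Castelnuovo bound so that $K_X\cdot H^{n-1}=0$, and conclude that the unique effective canonical divisor must vanish. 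This is more elementary and avoids both the case analysis and the Veronese-cone exclusion; what it costs you is the one nontrivial input that equality in the $p_g$ bound descends to equality in the genus bound for the general curve section. That fact is true and is on the same page of Harris that the paper cites for the converse implication (used in the proof of Theorem~\ref{ne+2}), but your appeal to ``the inductive structure of Harris's proof'' should be replaced by an explicit citation, since it is the load-bearing step of your argument. Your third paragraph's alternative is essentially the paper's route, and your worry there is well placed but moot: the paper sidesteps the construction of the containing variety $Y$ by quoting the classification wholesale, and sidesteps the singular cases by using the ACM property (rather than the exact sequence $0\to\omega_Y\to\mathcal{O}_Y\to\mathcal{O}_X\to 0$) for the intermediate cohomology.
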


\begin{proof}
Note that every Castelnuovo variety is arithmetically Cohen-Macaulay (see \cite[p.66]{Hr}), and in particular, $h^i(\mathcal{O}_X)=0$ for $0 < i < n$. According to Harris' classification (see \cite[p.65]{Hr}), one of the following holds: (1) $X$ is supported on a rational normal scroll $S$, (2) $X$ is a complete intersection of type $(2,n+1)$ (since $d=2(n+1)$), or (3) $X$ is a divisor of a cone over the Veronese surface in $\P^5$. Suppose that we are in Case (1). Recall that $X \in |-K_S|$ when $d=ne+2$ (see \cite[p.56]{Hr}), and thus, $\mathcal{O}_X(K_X) = \mathcal{O}_X$ by the adjunction formula, i.e., $X$ is Calabi-Yau. If we are in Case (2), then by the adjunction formula, $X$ is Calabi-Yau. Since $d=4n+2$ is not divisible by $4$ (see \cite[p.64]{Hr}), Case (3) cannot occur.
\end{proof}

Now, we prove Theorem \ref{ne+2}.

\begin{proof}[Proof of Theorem \ref{ne+2}]
In \cite[Proof of Corollary 1.6]{Z}, using the Castelnuovo's bound on the sectional genus and the concavity theorem (see e.g., \cite[Theorem 1.1]{Z} or \cite[Example 1.6.4]{posI}), Zak showed that $K_X.H^{n-1} <0$ (resp. $K_X.H^{n-1} \leq 0$) provided that $d \leq ne+1$ (resp. $d\leq ne+2$).
It is well known that $K_X.H^{n-1}<0$ implies the uniruledness of $X$.
Thus, it only remains to consider the case $H^n=d=ne+2$ and $K_X.H^{n-1}=0$. Let $g$ be the sectional genus of $X \subset \P^r$. Then, we have
$$
2g-2 = (K_X + (n-1)H).H^{n-1} =(n-1)H^n =(n-1)(ne+2) = 2{m \choose 2}e + 2{m \choose 1}\epsilon-2,
$$
where $m=n$ and $\epsilon = 1$, and hence, the generic curve section $C$ is a Castelnuovo curve. By \cite[p.67]{Hr} or \cite[Proposition 3.13]{Il}, $X$ is a Castelnuovo variety, and hence, the assertion follows from Lemma \ref{mincas}.
\end{proof}

\begin{remark}\label{enrem}
There is a non-degenerate Enriques surface $S \subset \P^r$ of codimension $e$ and degree $d = 2e+3$. Note that $S$ is neither Castelnuovo, Calabi-Yau, nor uniruled (see \cite[Proposition 2.1 and Corollary 3.2]{Bui}).
\end{remark}

The following theorem, which was pointed out by the referee, is a generalization of Theorem \ref{ne+2}.

\begin{theorem}
Let $X \subset \P^r$ be a non-degenerate smooth projective variety of dimension $n$, codimension $e$, and degree $d$. Fix an integer $l$ with $0 \leq l \leq n-1$. If $d < (n-l)e+2$ $($resp. $d \leq (n-l)e+2$$)$, then $-K_X.H^{n-1} > ld$ $($resp. $-K_X.H^{n-1} \geq ld$$)$.
\end{theorem}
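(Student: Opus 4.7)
The plan is to prove the bound via Castelnuovo's inequality on the sectional genus together with the adjunction formula, paralleling (but refining) the argument used for Theorem \ref{ne+2}. Let $g$ be the sectional genus of $X$. A general curve section $C = X \cap H_1 \cap \cdots \cap H_{n-1}$ is a non-degenerate smooth curve in $\P^{e+1}$ of degree $d$ and genus $g$, so Castelnuovo's bound yields
$$g \leq \binom{m}{2} e + m\epsilon, \qquad m = \left\lfloor \frac{d-1}{e} \right\rfloor, \quad \epsilon = d - me - 1,$$
while iterated adjunction gives $(K_X + (n-1)H).H^{n-1} = 2g - 2$, i.e.\ $K_X.H^{n-1} = 2g - 2 - (n-1)d$.

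The problem therefore reduces to the purely numerical claim: if $d \leq (n-l)e+2$, then $2g - 2 \leq (n-l-1)d$, with strict inequality when $d < (n-l)e+2$. Granted this,
$$-K_X.H^{n-1} \;=\; (n-1)d - (2g-2) \;\geq\; (n-1)d - (n-l-1)d \;=\; ld,$$
and analogously in the strict case.

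To verify the numerical claim, I would analyse cases according to which interval of the form $[me+1,(m+1)e]$ contains $d$. The boundary case $d = (n-l)e + 2$ gives $m = n-l$ and $\epsilon = 1$, for which the Castelnuovo bound simplifies to $(n-l)(n-l-1)e + 2(n-l) - 2 = (n-l-1)d$, yielding the equality case. For $d = (n-l)e+1$ (so $m = n-l$, $\epsilon = 0$), the bound becomes $(n-l)(n-l-1)e - 2 = (n-l-1)d - (n-l+1)$, a strict improvement. For $d$ in the sub-interval $[(n-l-1)e+2, (n-l)e]$ we have $m = n-l-1$, and a direct substitution reduces the difference $(n-l-1)d - (2g-2)$ to $(n-l-1)\bigl[(n-l)e+2-d\bigr] + 2$, which is manifestly positive since $d \leq (n-l)e$. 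Smaller values of $d$ (forcing $m \leq n-l-2$) are handled by the same style of substitution, the gap only widening as $m$ decreases.

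The main obstacle is purely organisational: the Castelnuovo bound is piecewise linear in $d$, so one must check strict improvement uniformly across every sub-interval without overlooking the transitions at $d = me+1$. However, each case reduces to a manifestly positive linear expression in the quantity $(n-l)e+2-d$, so once the boundary case is identified as the extremal one the remaining algebra is routine.
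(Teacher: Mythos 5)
Your proof is correct, and it reaches the statement by a genuinely more self-contained route than the paper's. The paper's own proof is two lines: pass to a general $(n-l)$-dimensional linear section $Y$ of $X$ (same degree $d$, same codimension $e$), observe that iterated adjunction gives $K_Y.(H|_Y)^{n-l-1}=(K_X+lH).H^{n-1}=K_X.H^{n-1}+ld$, and then invoke Zak's Corollary 1.6 applied to $Y$ --- the same citation already used in the proof of Theorem \ref{ne+2} --- which says precisely that this quantity is negative (resp.\ non-positive) when $d<(n-l)e+2$ (resp.\ $d\le (n-l)e+2$). Your reduction to the inequality $2g-2\le (n-l-1)d$ for the sectional genus $g$ is the same reduction in different clothing, since the sectional genus of $Y$ equals that of $X$ and $(n-l-1)d-(2g-2)=-K_Y.(H|_Y)^{n-l-1}$; the real difference is that you then verify this inequality directly from the classical Castelnuovo bound for the curve section, rather than quoting Zak, whose argument also uses a concavity theorem for the genera of linear sections. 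What your version buys is a self-contained proof in which the equality case is transparent: it occurs only for $d=(n-l)e+2$, $m=n-l$, $\epsilon=1$, with $g$ extremal, which is consistent with the Castelnuovo/Calabi--Yau dichotomy of Lemma \ref{mincas}. The cost is the piecewise check over the intervals $me+1\le d\le (m+1)e$; your treatment of $m=n-l$ and $m=n-l-1$ is correct, but the closing claim that ``the gap only widens as $m$ decreases'' should be replaced by the explicit uniform estimate --- for $m\le n-l-1$ and $\epsilon=d-me-1$ one finds $(n-l-1)d-(2g-2)\ge me(n-l-m)+\epsilon(n-l-1-2m)+(n-l-1)+2$, whose minimum over $0\le\epsilon\le e-1$ equals either $me(n-l-m)+(n-l-1)+2$ or $e(m+1)(n-l-1-m)+2m+2$, both visibly positive --- after which the argument is complete.
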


\begin{proof}
Consider an $(n-l)$-dimensional general linear section $Y$ of $X \subset \P^r$. By the adjunction formula,
$K_X.H^{n-1}+ld=(K_X+lH).H^{n-1} = K_Y.(H|_Y)^{n-l-1}$. By applying \cite[Proof of Corollary 1.6]{Z} to $Y$, we get the assertion.
\end{proof}


\end{document}